\numberwithin{equation}{section}
\numberwithin{figure}{section}
\newtheorem{theorem}{Theorem}[section]
\newtheorem{proposition}[theorem]{Proposition}
\newtheorem{lemma}[theorem]{Lemma}
\newtheorem{corollary}[theorem]{Corollary}
\newtheorem{conjecture}[theorem]{Conjecture}
\theoremstyle{definition}
\newtheorem{definition}[theorem]{Definition}
\newtheorem{remark}[theorem]{Remark}
\definecolor{myblue}{rgb}{0.6, 0.9, 1}
\newcommand{\Rmnum}[1]{\expandafter\@slowromancap\romannumeral #1@}
\definecolor{myblue}{rgb}{0.6, 0.9, 1}
\definecolor{mygreen}{rgb}{0,0,1}
\definecolor{purple}{rgb}{0.6,0.2,1}
\definecolor{orange}{rgb}{0.8,0,0.2}
\newcommand{\bC}{\mathbb{C}}
\newcommand{\bP}{\mathbb{P}}
\newcommand{\C}{\mathbb{C}}
\newcommand{\Q}{\mathbb{Q}}
\newcommand{\bZ}{\mathbb{Z}}
\newcommand{\bQ}{\mathbb{Q}}
\newcommand{\bR}{\mathbb{R}}
\newcommand{\bN}{\mathbb{N}}
\newcommand{\la}{\lambda}
\newcommand{\al}{\alpha}
\newcommand{\Gal}{\operatorname{Gal}}
\newcommand{\Kbar}{\overline{K}}
\newcommand{\Qbar}{\overline{\bQ}}
\newcommand{\<}{\langle}
\renewcommand{\>}{\rangle}
\newcommand\iso{\simeq}
\newcommand{\bbf}{\mathbf{f}}
\newcommand{\bbpsi}{\mathbf{\Psi}}
\newcommand{\bbx}{\mathbf{x}}
\newcommand{\bbphi}{\mathbf{\Phi}}
\newcommand{\bc}{\mathbf{C}}
\newcommand{\bbc}{\mathbf{c}}
\newcommand{\di}{\mathrm{div}}
\author{Niki Myrto Mavraki and Harry Schmidt}
	\email{myrto.mavraki@utoronto.ca \\ Department of Mathematical and Computational science \\ University of Toronto Mississauga\\ ON L5L 1C6, Canada }
	\email{Harry.Schmidt@warwick.ac.uk \\ Department of Mathematics\\ University of Warwick \\ CV4 7AL, Coventry}
\subjclass{37F44, 14G40}
\begin{document}
	\title[Dynamical Bogomolov conjecture in families]{On the dynamical Bogomolov conjecture for families of split rational maps}
	
	\date{\today}
	
	\begin{abstract}
		We prove that Zhang's dynamical Bogomolov conjecture holds uniformly along $1$-parameter families of rational split maps and curves.  
         This provides dynamical analogues of recent results of Dimitrov--Gao--Habegger \cite{DGH:pencils,DGH:uml} and K\"uhne \cite{kuhne:uml}. 
          In fact, we prove a stronger Bogomolov-type result valid for families of split maps in the spirit of the  relative Bogomolov conjecture \cite[Conjecture 1.2]{DGH:assumerbc}. We thus provide first instances of a generalization of Baker-DeMarco's conjecture \cite[Conjecture 1.10]{BD:polyPCF} to higher dimensions. 
         Our proof contains both arithmetic and analytic ingredients. Our main analytic result may be viewed as a dynamical  Ax--Lindemann-type theorem for split rational endomorphisms. More precisely, we show that weakly special curves under 
         the action of a split map $(f,g)$ of $(\bP_{\bC}^1)^2$ are exactly those that lead to linear relations between the measures of maximal entropy of $f$ and $g$. This extends a previous result of Levin-Przytycki \cite{LP}. 
         We further establish a height inequality for families of split maps and varieties comparing the values of a fiber-wise Call-Silverman canonical height with a height on the base and valid for most points of a non-preperiodic variety. This provides a dynamical generalization of \cite[Theorem 1.3]{Habegger:special} and generalizes results of Call-Silverman \cite{Call:Silverman} and Baker \cite{Baker:finiteness} to higher dimensions. In particular, we establish a geometric Bogomolov theorem for split rational maps and varieties of arbitrary dimension. 
	\end{abstract}
	\maketitle
	\section{Introduction}

	Motivated by the Manin-Mumford conjecture, proved in \cite{Raynaud:1}, and its strengthening as in Bogomolov's conjecture, proved in \cite{Ullmo:Bogomolov, ZhangBog}, Zhang conjectured that analogous statements  \cite[Conjectures 1.2.1 and 4.1.7]{Zhang:distributions} hold in a more general dynamical setting. 
    Our goal in this paper is to prove uniform and relative versions of the dynamical Bogomolov conjecture for $1$-parameter families of split rational maps.

    Our main results are stated as Theorem \ref{umb}, Theorem  \ref{relative bogomolov}, Theorem \ref{transcendence} and Theorem \ref{splitineq} in what follows. Our efforts culminate in the dynamical relative Bogomolov Theorem \ref{relative bogomolov}, which implies the uniformity in the dynamical Bogomolov-type Theorem \ref{umb} and whose proof uses Theorems \ref{transcendence} and \ref{splitineq}.
    To ease the reader into our work, we begin with an application that may be of independent interest in complex dynamics. 
    Throughout $\mathrm{Prep}(\phi)$ denotes the set of preperiodic points of a self-map of a set $\phi$ and we let $\mathrm{Rat}_d$ be the space of degree $d$ rational functions. 
    A first conjecture  \cite[Conjecture 1.4]{DKY:uni} hinting at uniformity in the dynamical Manin-Mumford conjecture is as follows. 
    
    \begin{conjecture}[DeMarco-Krieger-Ye]\label{diagonal}
    Let $d\ge 2$. There exists $M>0$ such that for any two rational functions $f, g\in \mathrm{Rat}_d(\bC)$ we have 
    $$\text{either }\mathrm{Prep}(f)=\mathrm{Prep}(g)\text{ or }\#\mathrm{Prep}(f)\cap \mathrm{Prep}(g)<M.$$
    \end{conjecture}
    
     As evidence, DeMarco, Krieger and Ye show \cite[Theorem 1.1]{DKY:uni} that Conjecture \ref{diagonal} holds along the surface in $\mathrm{Rat}_d\times \mathrm{Rat}_d$ given by the unicritical polynomials. 
           \begin{theorem}[DeMarco-Krieger-Ye]
           There is a constant $M>0$ such that for $t_1,t_2\in\bC$
           $$\text{either }t_1=t_2\text{ or }\#\mathrm{Prep}(z^2+t_1)\cap \mathrm{Prep}(z^2+t_2)<M.$$
           \end{theorem}
          It is well known \cite{Beardon} that $t_1= t_2$ if and only if $\mathrm{Prep}(z^2+t_1)= \mathrm{Prep}(z^2+t_2)$ if and only if the Julia sets of $z^2+t_1$ and $z^2+t_2$ agree.

     A first application of our results shows that  Conjecture \ref{diagonal} holds along arbitrary curves in $\mathrm{Rat}_d\times \mathrm{Rat}_d$ defined over the algebraic numbers. 
   
              \begin{theorem} \label{moduli:curve}
                  Let $C\subset \mathrm{Rat}_d\times \mathrm{Rat}_d$ be a curve defined over $\Qbar$. 
                                Then, there exists $M = M(C)>0$ such that for all $(f,g)\in C(\bC)$ we have 
                                	$$\text{either }\mathrm{Prep}(f)=\mathrm{Prep}(g)\text{ or }\#\mathrm{Prep}(f)\cap \mathrm{Prep}(g)<M.$$
              \end{theorem}

      Note here that the common preperiodic points of $f$ and $g$ in Conjecture \ref{diagonal} can be seen as the preperiodic points under the action of the split map $(f,g):\bP_1^2\to \bP_1^2$, with $(f,g)(x,y)=(f(x),g(y))$, which lie on the diagonal in $\bP_1^2:=\bP^1\times \bP^1$. 
      Replacing the diagonal with a different irreducible curve $C$, Ghioca, Nguyen and Ye \cite{dmm1} have established that 
      \begin{align}
      \#\mathrm{Prep}(f,g)\cap C(\bC)<+\infty, 
      \end{align}
      unless $C$ is preperiodic under the action of $(f,g)$; that is if $f$ or $g$ is non-exceptional. They thus established Zhang's dynamical Manin-Mumford conjecture for split maps and curves. 
      Note here that the assumption that the maps $f$ and $g$ are non-exceptional ensures that counterexamples to Zhang's original conjectures are avoided; see \cite{mm:counter, mm:new}.
      
      	Also in view of recent breakthroughs of Dimitrov, Gao and Habegger \cite{DGH:uml} and K\"uhne \cite{kuhne:uml}, who established a uniform version of the classical Manin-Mumford and Bogomolov conjectures, one may wonder when is $\#\mathrm{Prep}(f,g)\cap C(\bC)$ bounded uniformly. 
          In Theorem \ref{umb} we show that this holds if the maps and the curve vary along a $1$-parameter family and are defined over the algebraic numbers.
          
      	K\"uhne has further established a stronger relative version of Bogomolov's conjecture \cite[Conjecture 1.2]{DGH:assumerbc} for varieties in fibered products of elliptic families \cite{kuhne:relative}. 
      	In Theorem \ref{relative bogomolov} we prove instances of an analogous statement in the setting of families of split maps.

   \subsection{Setup}\label{setup}
      In order to state our results we introduce some notation and terminology. 
	Throughout we let $B$ be a projective, regular, irreducible curve over $\Qbar$ and write $K=\Qbar(B)$. We often pass to finite extensions of $K$ and by an irreducible variety over $K$ we always mean irreducible over $\overline{K}$.
	We let $\bbphi:\mathbb{P}^{\ell}_1\to \mathbb{P}^{\ell}_1$ be an endomorphism defined over $K$ for $\ell\in \bN_{\ge 1}$. 
	We say that $\bbphi$ is a \emph{split polarized endomorphism} of polarization degree $d\ge 2$ over $K$ if it is defined as $\bbphi(z_1,\ldots,z_{\ell})=(\bbf_1(z_1),\ldots,\bbf_{\ell}(z_{\ell}))$, where $\bbf_1,\ldots,\bbf_{\ell}$ are rational functions of fixed degree $d\ge 2$ defined over $K$. Recall that $\bbf\in K(z)$ is isotrivial if there is a M\"obius transformation $\mathbf{M}\in \overline{K}(z)$ such that $\mathbf{M}\circ \bbf \circ \mathbf{M}^{-1}\in \Qbar(z)$. If all $\bbf_i$ are non-isotrivial for all $i=1,\ldots,\ell$, then we say that $\bbphi$ is \emph{isotrivial-free}. 
	The map $\bbphi$ defines an endomorphism of a quasiprojective variety $\Phi: B_0\times \mathbb{P}^{\ell}_1\to B_0\times \mathbb{P}^{\ell}_1$, where $B_0$ is a Zariski open and dense subset of $B$ and $\Phi$ is defined over $\Qbar$. 
	Similarly a (geometrically) irreducible subvariety $\mathbf{X}\subset \mathbb{P}^{\ell}_1$ over $K$ gives a subvariety $\mathcal{X}\subset B_0\times\mathbb{P}^{\ell}_1$ over $\Qbar$.
    
	We denote by $\pi: B_0\times\mathbb{P}^{\ell}_1\to B_0$ the projection map and for each $t\in B_0(\bC)$ we write $\Phi_t ={\Phi}|_{\pi^{-1}(\{t\})}$ and $\mathcal{X}_t=\mathcal{X}\cap \pi^{-1}(\{t\})$.
	We then have a fiber-wise Call-Silverman canonical height \cite{Call:Silverman} associated to $\Phi$, assigning to each point $Z:=(t,z_1,\ldots,z_{\ell})\in (B_0\times\mathbb{P}^{\ell}_1)(\Qbar)$ the value of a canonical height in its fiber as
    \begin{align}\label{fiberwiseheight}
    \hat{h}_{\Phi}(Z)=\hat{h}_{\Phi_{\pi(Z)}}(Z)=\sum_{i=1}^{\ell}\hat{h}_{f_{i,t}}(z_i)\in \mathbb{R}_{\ge 0}.
    \end{align}
    We note here that the canonical height $\hat{h}_{\Phi}(Z)$ vanishes exactly when $Z\in \mathrm{Prep}(\Phi_{\pi(Z)})$. 
    More generally it can be thought of as a measure of distance between $Z$ and the preperiodic points of the specialization $\Phi_{\pi(Z)}$ on its fiber.

 \emph{ Throughout this article, let $B_0$ be an unfixed sufficiently small
open and dense subset of $B$. In particular, we can assume that
$\Phi_t$ is a polarized endomorphism of polarization degree $d\ge 2$ and that $\mathcal{X}_t$ is irreducible for all $t\in B_0(\Qbar)$.}

  \subsection{Instances of uniformity in the dynamical Bogomolov conjecture}
  Our first main result establishes the first instance of uniformity in the dynamical Bogomolov and Manin-Mumford conjectures \cite[Conjectures 1.2.1 and 4.1.7]{Zhang:distributions} or \cite[Conjecture 1.2]{mm:new} for varying non-polynomial maps or along a family of varying curves. 
     For the statement to hold, one requires the notion of a weakly special curve (see Definition \ref{weakly special}), as we shall explain shortly. 
     
    	\begin{theorem}\label{umb}
    		Let $\bbphi:\bP^2_1\to \bP^2_1$ be a split polarized endomorphism over $K$ with polarization degree $\ge 2$.  Let $\bc\subset \bP^2_1$ be an irreducible curve defined over $K$ that is not weakly $\bbphi$-special. 
    		 Then there exist $\epsilon>0$ and $M>0$ depending only on $\bbphi$ and $\bc$ such that
    		\begin{align}\label{setfinite}
    			\#\{z\in \mathcal{C}_t(\Qbar)~:~\hat{h}_{\Phi_t}(z)<\epsilon \}\le M,
    		\end{align}
    		for all but finitely many $t\in B_0(\Qbar)$. 
            In particular, 
            \begin{align*}
              		\#\mathcal{C}_t(\Qbar)\cap \mathrm{Prep}(\Phi_t)\le M,
              		\end{align*}
                for all but finitely many $t\in B_0(\Qbar)$.    
    	\end{theorem}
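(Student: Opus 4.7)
The plan is to adapt the Dimitrov-Gao-Habegger \cite{DGH:uml} and K\"uhne \cite{kuhne:uml} framework for uniform Bogomolov to the dynamical setting, leveraging the three principal tools developed elsewhere in the paper: the height inequality for split families (Theorem \ref{splitineq}), the relative Bogomolov theorem (Theorem \ref{relative bogomolov}), and the characterization of preperiodic curves (Theorem \ref{transcendence}, extending \cite{LP}). The starting point is the fiber-power trick.

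For an integer $n\geq 1$ to be specified, I would form the relative self-product $\mathcal{C}^{[n]} := \mathcal{C}\times_{B_0}\cdots\times_{B_0}\mathcal{C}$, sitting inside $B_0\times (\bP^2_1)^n$ and equipped with the obvious split polarized endomorphism $\Phi^{(n)}=\Phi\times\cdots\times\Phi$. The fiber-wise canonical height is additive, $\hat{h}_{\Phi^{(n)}_t}(z_1,\ldots,z_n) = \sum_{i=1}^n \hat{h}_{\Phi_t}(z_i)$, so $n$ distinct small-height points on $\mathcal{C}_t$ yield a small-height $n$-tuple on $\mathcal{C}^{[n]}_t$. Applying Theorem \ref{splitineq} to $\mathcal{C}^{[n]}$ produces a proper Zariski-closed subset $\mathcal{Z}_n\subsetneq\mathcal{C}^{[n]}$ and constants $c_n,c_n'>0$ such that
\[
\sum_{i=1}^n \hat{h}_{\Phi_{\pi(\mathbf{z})}}(z_i) \;\geq\; c_n\, h_B(\pi(\mathbf{z})) - c_n' \qquad \forall\, \mathbf{z}\in (\mathcal{C}^{[n]}\setminus \mathcal{Z}_n)(\Qbar).
\]

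The core geometric step is to describe $\mathcal{Z}_n$. Its components are either contained in a single fiber of $\pi$ (contributing to an eventual finite exceptional set of $t$) or dominate $B_0$. For each dominant component $\mathcal{W}\subseteq \mathcal{Z}_n$, the relative Bogomolov theorem (Theorem \ref{relative bogomolov}) forces $\mathcal{W}$ to be a relative preperiodic subvariety of $\Phi^{(n)}$; the classification of preperiodic subvarieties under split, isotrivial-free systems (Theorem \ref{transcendence} together with a Medvedev-Scanlon-type analysis across the $n$ factors) then expresses each such $\mathcal{W}$ as a combination of diagonal coincidences $z_i=z_j$ and of weakly $\bbphi$-special curves among the factors. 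Since $\bc$ itself is \emph{not} weakly $\bbphi$-special by hypothesis, no genuine weakly-special factor is permitted, leaving only diagonal-type components, whose count is bounded by a function $N_n$ of $n$ alone.

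Set $M := N_n$ and $n := M+2$ (this pair can be resolved once $N_n$ is made explicit in Step 3), and pick $\epsilon>0$ sufficiently small. If $t\in B_0(\Qbar)$ admits more than $M$ points $z\in \mathcal{C}_t(\Qbar)$ with $\hat{h}_{\Phi_t}(z)<\epsilon$, I would extract $n$ pairwise distinct such points; the resulting tuple $\mathbf{z}$ lies in $\mathcal{C}^{[n]}_t$, avoids all diagonals, and therefore lies outside $\mathcal{Z}_n$ as soon as $t$ avoids the finite set of fibers of $\pi$ contained in $\mathcal{Z}_n$. The height inequality then gives $h_B(t)\leq (n\epsilon+c_n')/c_n$. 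A concluding specialization argument, using the relative Bogomolov once more on $\mathcal{C}$ itself applied to the ``sections" traced out by an infinite family of such $t$, together with Northcott on a fixed number field over which $B,\mathcal{C},\Phi$ descend, cuts this bounded-height locus down to a finite set. The principal obstacle is the structural analysis of $\mathcal{Z}_n$ in Step 3, namely propagating non-weak-specialness of $\bc$ through the relative Bogomolov and the classification of preperiodic subvarieties to a tight description of the dominant components; a secondary subtlety is the final reduction to finitely many $t$, since Northcott over $\Qbar$ is unavailable and one must argue that an infinite family of surviving $t$ would sweep out a section of $\mathcal{C}$ violating non-weak-specialness of $\bc$.
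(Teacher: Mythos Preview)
Your approach follows the Dimitrov--Gao--Habegger/K\"uhne template (height inequality on an $n$-fold fiber power, then classify the bad locus), but the paper's proof is considerably shorter and takes a different route. The paper uses only the \emph{second} fiber power $\mathcal{C}\times_{B_0}\mathcal{C}$ and applies Theorem~\ref{relative bogomolov} (or Theorem~\ref{relative bogomolov iso} when a component of $\bbphi$ is isotrivial) directly with $\bbpsi=\bbphi$ and $\mathbf{X}=\bc$. This yields that the Zariski closure $\mathcal{M}$ of the small pairs has dimension $\le 2$ in the $3$-fold $\mathcal{C}\times_{B_0}\mathcal{C}$. For all but finitely many $t$, the fiber $\mathcal{M}_t$ is then at most a curve inside the surface $\mathcal{C}_t\times\mathcal{C}_t$, and a B\'ezout-type count---the projection of any dominant $2$-dimensional component to one $\mathcal{C}$-factor has uniformly bounded fiber degree $D$---gives $(\#\Sigma_t)^2\le D\cdot\#\Sigma_t + O(1)$, hence $\#\Sigma_t\le M$. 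No classification of the components of $\mathcal{M}$, no Northcott argument, and no height bound on $t$ is needed.

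Your proposal, by contrast, has a genuine gap at the step you yourself flag as the ``principal obstacle.'' You claim that the dominant components of $\mathcal{Z}_n$ in $\mathcal{C}^{[n]}$ are forced, via Theorem~\ref{relative bogomolov} and ``a Medvedev--Scanlon-type analysis,'' to be diagonal-type. First, the preperiodicity of these components comes from the definition of $\mathcal{X}^{\bbphi,\star}$ in Theorem~\ref{splitineq}, not from Theorem~\ref{relative bogomolov}; the latter does not classify small-point loci as preperiodic. Second, and more seriously, no Medvedev--Scanlon-type classification is available for $\bbphi^{(n)}$-preperiodic subvarieties of $\bc^n\subset(\bP_1^2)^n$ when the $\bbf_i$ are general rational maps: \cite{MS} treats only polynomials, and Theorem~\ref{transcendence} concerns only curves in $\bP_1^2$. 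Without this, you cannot bound $N_n$, and the argument does not close. Your final step---reducing the bounded-height locus of $t$ to a finite set---is also unresolved; you correctly note that Northcott fails over $\Qbar$, and the suggested workaround via ``sections traced out by an infinite family'' is not a proof. The paper sidesteps both issues entirely by never needing either a structural description of the bad locus or a height bound on $t$. You also do not address the case where $\bbphi$ has an isotrivial component, where Theorem~\ref{splitineq} does not apply; the paper handles this separately via Theorem~\ref{relative bogomolov iso}.
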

    	
        That one should exclude certain $t\in (B\setminus B_0)(\Qbar)$ in the conclusion of the theorem is clear. For  instance if $\mathcal{C}_t$ is $\Phi_t$-preperiodic, then it contains infinitely many $\mathrm{Prep}(\Phi_t)$-preperiodic points. 
        That there are at most finitely many such exceptional $t$ is not a priori clear but follows from our theorem. 
        More generally, we can show that for all but finitely many $t$ the curve $\mathcal{C}_t$ stays a distance away from being $\Phi_t$-preperiodic. The said distance is measured by a real valued Arakelov-Zhang pairing function 
                       $$t\mapsto \hat{h}_{\Phi_t}(\mathcal{C}_t);$$
        taking non-negative values for $t\in B_0(\Qbar)$ (see \S\ref{background}.) 
        In view of Zhang's fundamental inequality \cite{Zhang:positivesurfaces}, characterizing its zeros amounts to the dynamical Bogomolov conjecture \cite[Theorem 1.1]{dmm1}.
              More precisely, as a consequence of Theorem \ref{umb}, we obtain the following.
                   
                   \begin{corollary}\label{small curves}
                   Let $\bbphi:\bP^2_1\to \bP^2_1$ be a split polarized endomorphism over $K$ with polarization degree $\ge 2$. Let $\bc\subset \bP^2_1$ be an irreducible curve defined over $K$ that is not weakly $\bbphi$-special. Then the following hold. 
                   \begin{enumerate}
                   \item 
                   There are at most finitely many $t\in B_0(\bC)$ such that $\mathcal{C}_t$ is $\Phi_t$-preperiodic.
                   \item 
                   There is $\epsilon>0$ such that the set 
                              $\{t\in B_0(\Qbar)~:~\hat{h}_{\Phi_t}(\mathcal{C}_t)<\epsilon\}$
                    is finite.           
                   \end{enumerate}
                   \end{corollary}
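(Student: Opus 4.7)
The plan is to derive both parts of Corollary~\ref{small curves} from Theorem~\ref{umb}. For (2), I combine Zhang's inequality with the uniform bound of Theorem~\ref{umb}. Let $\epsilon_0>0$ and $M>0$ be the constants furnished by Theorem~\ref{umb}. For each $t\in B_0(\Qbar)$, $\hat{h}_{\Phi_t}$ is the Call--Silverman canonical height on $(\bP^1)^2$ attached to the polarized endomorphism $\Phi_t$, and $\mathcal{C}_t$ is a closed curve whose degree equals $\deg\bc$, a quantity independent of $t$. Zhang's inequality then yields a constant $\kappa>0$ (depending only on $\deg\bc$) such that the essential minimum of $\hat{h}_{\Phi_t}$ on $\mathcal{C}_t$ satisfies $e_1(\mathcal{C}_t)\leq\kappa\,\hat{h}_{\Phi_t}(\mathcal{C}_t)$. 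Setting $\epsilon:=\epsilon_0/(2\kappa)$, any $t\in B_0(\Qbar)$ with $\hat{h}_{\Phi_t}(\mathcal{C}_t)<\epsilon$ would have $e_1(\mathcal{C}_t)<\epsilon_0/2$, so the set $\{z\in\mathcal{C}_t(\Qbar):\hat{h}_{\Phi_t}(z)<\epsilon_0\}$ would be Zariski-dense in the irreducible curve $\mathcal{C}_t$, hence infinite. This contradicts the bound $M$ of Theorem~\ref{umb} for all but finitely many $t\in B_0(\Qbar)$, yielding (2).

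For (1), I pass from $B_0(\Qbar)$ to all of $B(\bC)$ via a Zariski-closedness argument. For each pair of positive integers $n<m$, I regard $\Phi^n(\mathcal{C})$ and $\Phi^m(\mathcal{C})$ as closed subschemes of $B_0\times(\bP^1)^2$ defined over $\Qbar$. If they coincide as subschemes, then $\bbphi^n(\bc)=\bbphi^m(\bc)$ on the generic fiber, making $\bc$ $\bbphi$-preperiodic and thus weakly $\bbphi$-special, contrary to hypothesis. Otherwise, the locus $S_{n,m}:=\{t\in B:\Phi_t^n(\mathcal{C}_t)=\Phi_t^m(\mathcal{C}_t)\}$ is a proper Zariski-closed subset of the irreducible curve $B$, defined over $\Qbar$, and hence a finite subset of $B(\Qbar)$. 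Consequently the preperiodic locus $\{t\in B(\bC):\mathcal{C}_t\text{ is }\Phi_t\text{-preperiodic}\}=\bigcup_{n<m}S_{n,m}$ lies inside $B(\Qbar)$. For $t$ in this locus lying in $B_0(\Qbar)$, preperiodicity forces $\hat{h}_{\Phi_t}(\mathcal{C}_t)=0$ via the functoriality $\hat{h}_{\Phi_t}(\Phi_t^n(\mathcal{C}_t))=d^n\,\hat{h}_{\Phi_t}(\mathcal{C}_t)$, placing $t$ in the finite set supplied by (2). The remaining $t$'s lie in the finite set $B\setminus B_0$, completing (1).

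The main obstacle I foresee is invoking Zhang's inequality for the dynamical height $\hat{h}_{\Phi_t}$ with a constant $\kappa$ that is uniform in $t\in B_0(\Qbar)$. The inequality itself is standard in the arithmetic Arakelov setting for canonical heights attached to polarized endomorphisms of projective varieties over $\Qbar$, applied fiber by fiber; uniformity of $\kappa$ follows because $\deg\bc$ and the polarization degree $d$ of $\bbphi$ are fixed along $B_0$. Every remaining step is formal: the projection-formula identity $\hat{h}_{\Phi_t}(\Phi_t^n(V))=d^n\hat{h}_{\Phi_t}(V)$ for cycles, and the elementary fact that a proper Zariski-closed subset of an irreducible curve is finite.
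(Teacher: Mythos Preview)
Your proof is correct and follows essentially the same approach as the paper: both derive (2) from Theorem~\ref{umb} via Zhang's fundamental inequality (Lemma~\ref{zhang ineq}), and both derive (1) by reducing algebraic $t$ to (2) while observing that a transcendental $t\in B_0(\bC)$ realizes the generic fiber (your $S_{n,m}$ argument is a rephrasing of exactly this). Two minor remarks: your constant $\kappa$ can be taken equal to $2$ independently of $\deg\bc$ (for a curve, Zhang's inequality and $e_2\ge 0$ give $e_1\le 2\,\hat{h}_{\Phi_t}(\mathcal{C}_t)$), and the assertion that $S_{n,m}$ is Zariski-\emph{closed} is not fully justified as stated, though for your purposes it suffices that $S_{n,m}$ avoids the generic point of $B$, which is immediate from the hypothesis that $\bc$ is not $\bbphi$-preperiodic.
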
 
        
        Let us now explain the necessity of a condition on $\bc$ in the statement of Theorem \ref{umb}. 
        First, recalling that preperiodic  curves contain infinitely many preperiodic points, we need to exclude the possibility that $\bc$ is $\bbphi$-preperiodic for the conclusion of Theorem \ref{umb} to remain true. 
        If $\bbphi$ is given by a pair of Latt\`es maps, which are associated to algebraic groups, then we further have to assume that $\bc$ is not the projection of a curve with genus $1$ as an elliptic curve contains infinitely many points of finite order. 
         Moreover, if the curve is vertical $\bc=\bbc\times \bP_1$ for a point $\bbc\in K$ assuming that it is not $\bbphi$-preperiodic does not suffice for the conclusion to hold. Indeed, by an application of Montel's theorem there might still be infinitely many $t\in B_0(\Qbar)$ such that $\mathcal{C}_t$ is $\Phi_t$-preperiodic; see \cite{DF, D:stableheight}. 
              To make matters worse, even excluding all such $t$ one cannot hope to find uniform $\epsilon,M>0$ satisfying \eqref{setfinite}. If for instance $\bbf_1$ is not isotrivial, one can solve the  equations $f_{1,t}^n(c(t))= 1$ for varying $n$ and apply  \cite[Theorem 1.4]{Call:Silverman}. 
            
            Our notion of a weakly $\bbphi$-special curve aims to summarize all such necessary assumptions. 
            As our examples reveal, we need to consider maps that are associated to groups separately.             	
             To this end, we recall here the notion of an exceptional map. 
   \begin{definition} \label{ordinary} For this definition we assume that all objects are defined over some algebraically closed field with characteristic zero. We say that a rational map $f: \mathbb{P}_1\rightarrow \mathbb{P}_1$ of degree $\deg(f) \geq 2$ is \emph{exceptional} if there exists an algebraic group $G_f$ (which is either an elliptic curve or the multiplicative group), an isogeny $\varphi_f: G_f\rightarrow G_f$ and a dominant map $\Psi_f: G_f \rightarrow \mathbb{P}_1$ such that 
   	$$\Psi_f\circ \varphi_f= f\circ \Psi_f.$$
   If $f$ is not exceptional, then we say that $f$ is \emph{ordinary}. 
   \end{definition} 

Our vocabulary might not completely agree with some terms in the established literature. %However there does not seem to be a consensus. 

\noindent Equipped with Definition~\ref{ordinary} we can now define the notion of a weakly special curve for a split map. 
   \begin{definition}\label{weakly special}
   For this definition we assume that all objects are defined over some algebraically closed field with characteristic zero. Let $C\subset \bP_1^2$ be an irreducible curve and denote its projection into the $i$-th coordinate of $\bP^2_1$ by $\pi_i:C\to \bP_1$ for $i=1,2$. Let $\phi=(f,g):\bP_1^2\to \bP_1^2$ be a split polarized endomorphism of polarization degree $\ge 2$. 
   We say that $C$ is \emph{weakly $\phi$-special} if  one of the following holds: 
   \begin{enumerate}
   \item 
   $\pi_i$ is not dominant for some $i=1,2$; or
   \item 
    $C$ is $\phi$-preperiodic; or 
   \item 
   both $f$ and $g$ are exceptional maps with associated algebraic groups $G_f,G_g$ and maps $\Psi_f, \Psi_g$ and there exists a coset $H \subset G_f\times G_g$ such that $C\cap \mathrm{Im}(\Psi_f, \Psi_g)=(\Psi_f, \Psi_g)(H)$. 
   \end{enumerate}
We say that $C$ is \emph{$\phi$-special} if it is $\phi$-preperiodic or  both $f$ and $g$ are exceptional maps with associated algebraic groups $G_f,G_g$ and maps $\Psi_f, \Psi_g$ and there exists a torsion coset $H \subset G_f\times G_g$ such that $C\cap \mathrm{Im}(\Psi_f, \Psi_g)=(\Psi_f, \Psi_g)(H)$.
   \end{definition}

Determining the equations that define preperiodic curves is a notoriously difficult problem. That said, a precise description of curves that are invariant under the action of split maps acting by a polynomial on each coordinate has been given in the celebrated work of Medvedev and Scanlon \cite{MS}. 
That at hand, Theorem \ref{umb} gives uniform bounds for the number of preperiodic points on translates of a subgroup of $\mathbb{G}_m^2$. 
More precisely, given $n,m \in \bZ\setminus\{0\}$ and an ordinary polynomial $f(z)\in \overline{\bQ}[z]$, Theorem \ref{umb} implies that there is a constant $M=M(n,m)$ (independent of $t$) such that 
 $$\#\{(x,y)\in  \mathrm{Prep}(f) \times  \mathrm{Prep}(f) ~:~x^ny^m = t\}\le M,$$
for all but finitely many $t\in \Qbar$.

   \subsection{Towards a relative dynamical Bogomolov conjecture}    
       
  Theorem \ref{umb} is in fact a corollary of our next result, which gives instances of dynamical analogues of the relative Bogomolov conjecture \cite[Conjecture 1.2]{DGH:assumerbc}. 
    In particular, it provides a better description of the geometric distribution of the sets in \eqref{setfinite}. 
    	\begin{theorem}\label{relative bogomolov}
    		Let $\bbphi:\bP^2_1\to \bP^2_1$ and $\mathbf{\Psi}: \bP^{\ell}_1\to \bP^{\ell}_1$  be isotrivial-free split polarized endomorphisms over $K$ both of polarization degree $d\ge 2$ for $\ell\in \bZ_{\ge 1}$. 
            Let $\bc\subset \bP_1^2$ be an irreducible curve that is not weakly $\bbphi$-special and let $\mathbf{X}\subset \bP^{\ell}_1$ be an irreducible subvariety that is not $\bbpsi$-preperiodic, both defined over $K$. 
    		Then there exists $\epsilon=\epsilon(\bbphi,\bc,\bbpsi,\mathbf{X})>0$ such that 
    		\begin{align*}
    			\{(P,Q)\in(\mathcal{C}\times_{B_0}\mathcal{X})(\Qbar)~:~\hat{h}_{\Phi}(P)+\hat{h}_{\Psi}(Q)<\epsilon\}
    		\end{align*} 
    		is not Zariski dense in $\mathcal{C}\times_{B_0} \mathcal{X}$. Here $\hat{h}_{\Phi}(\cdot), ~\hat{h}_{\Psi}(\cdot)$ denote the corresponding fiber-wise heights from \eqref{fiberwiseheight}.
    	\end{theorem}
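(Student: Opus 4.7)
The plan is to derive Theorem \ref{relative bogomolov} from the height inequality Theorem \ref{splitineq} together with the geometric Bogomolov statement for split maps of arbitrary dimension (also announced in the abstract), following the Dimitrov--Gao--Habegger / K\"uhne-style scheme for relative Bogomolov results.

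First I would package the data: $(\bbphi,\bbpsi)$ is an isotrivial-free split polarized endomorphism of $\bP^{2+\ell}_1$ of polarization degree $d\ge 2$ over $K$, and $\bc\times\mathbf{X}$ is an irreducible subvariety of $\bP^{2+\ell}_1$. Because a product subvariety is preperiodic under a split map precisely when each factor is preperiodic (the iterates satisfy $(\bbphi,\bbpsi)^n(\bc\times\mathbf{X})=\bbphi^n(\bc)\times\bbpsi^n(\mathbf{X})$), the hypotheses force $\bc\times\mathbf{X}$ not to be $(\bbphi,\bbpsi)$-preperiodic. Theorem \ref{splitineq} then applies to the induced family $\mathcal{C}\times_{B_0}\mathcal{X}\to B_0$, providing a proper Zariski-closed subvariety $Z\subsetneq \mathcal{C}\times_{B_0}\mathcal{X}$ and constants $c_1,c_2>0$ with
$$\hat h_\Phi(P)+\hat h_\Psi(Q)\;\ge\; c_1\,h_B(\pi(P,Q))-c_2\qquad\text{for every }(P,Q)\in\bigl(\mathcal{C}\times_{B_0}\mathcal{X}\setminus Z\bigr)(\Qbar),$$
where $h_B$ is a fixed Weil height on the base curve $B$.

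Now suppose for contradiction that the set $S_\epsilon$ of the theorem is Zariski dense in $\mathcal{C}\times_{B_0}\mathcal{X}$ for every $\epsilon>0$. Fix such $\epsilon$; since $Z$ is proper and the ambient family irreducible, $S_\epsilon\setminus Z$ is likewise Zariski dense, and the displayed inequality forces $h_B(\pi(P,Q))\le (c_2+\epsilon)/c_1$ on this set. I would then transfer this family-level bound to the generic fiber via a Silverman-style specialization of the Call--Silverman canonical height, applied to $\Qbar$-sections of $\mathcal{C}\times_{B_0}\mathcal{X}\to B_0$ extracted (after passing to a suitable finite cover of $B$) from a Zariski-dense subset of $S_\epsilon\setminus Z$: the uniform smallness of $\hat h_\Phi+\hat h_\Psi$ over bounded-height base points descends to an $O(\epsilon)$ bound on the generic essential minimum of the canonical height of $(\bbphi,\bbpsi)$ on the geometric variety $\bc\times\mathbf{X}/K$. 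Letting $\epsilon\to 0$ yields that this generic essential minimum is zero. The geometric Bogomolov theorem for split rational maps of arbitrary dimension---one of the main results advertised in the abstract---asserts that a non-$(\bbphi,\bbpsi)$-preperiodic subvariety has strictly positive generic essential minimum; this contradicts the conclusion above and completes the proof.

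The genuinely difficult steps are the two input results, Theorem \ref{splitineq} and the geometric Bogomolov theorem for split maps, both of which hinge on the transcendence statement Theorem \ref{transcendence} and the extension of the Levin--Przytycki classification of preperiodic curves to the split setting. Conditional on these, the argument above is essentially formal; the main subtlety of the deduction is the reinterpretation of bounded-base-height $\Qbar$-points of the family as sections over finite covers of $B$, as required to invoke the specialization step that bridges the arithmetic height inequality on the family with the geometric essential minimum on the generic fiber.
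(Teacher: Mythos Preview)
Your deduction has a genuine gap at the ``specialization'' step, and in fact the approach cannot work as stated. The points of $S_\epsilon$ are $\Qbar$-points of the total space $\mathcal{C}\times_{B_0}\mathcal{X}$, i.e.\ points lying in individual fibers over $t\in B_0(\Qbar)$; they are \emph{not} sections, and there is no mechanism for promoting them to points of $(\bc\times\mathbf{X})(\overline{K})$ with small function-field canonical height. Call--Silverman specialization runs in the opposite direction (from a section $\mathbf{s}$ to estimates on $\hat h_{\Phi_t}(\mathbf{s}(t))$), and bounded base height over $\Qbar$ gives no finiteness. So the bridge you need between ``Zariski-dense small $\Qbar$-points over bounded-height base'' and ``zero essential minimum over $K$'' simply does not exist; this bridge is precisely the content of the theorem. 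A symptom of the problem is that your argument never uses the hypothesis that $\bc$ is not \emph{weakly} $\bbphi$-special---only that $\bc\times\mathbf{X}$ is not preperiodic, which follows already from $\mathbf{X}$ not being $\bbpsi$-preperiodic. The stronger hypothesis on $\bc$ is genuinely necessary (cf.\ the discussion before Definition~\ref{weakly special}), so any proof that ignores it must be wrong.

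The paper's proof is structurally different. Theorem~\ref{splitineq} and the geometric Bogomolov Theorem~\ref{bogomolov hypothesis} are used only to guarantee non-degeneracy, so that the Yuan--Zhang equidistribution theorem (Proposition~\ref{equidistribution}) applies to a hypothetical generic small sequence. Equidistribution is applied three times (Lemma~\ref{equidistributiontwice}) to force an equality of measures on $\mathcal{C}$; after slicing the resulting currents fiber-wise and using Proposition~\ref{compactum} to find a compact set of positive mass on the base, one concludes that $\deg(\pi_2)\pi_{1,t}^*\mu_{f_{1,t}}=\deg(\pi_1)\pi_{2,t}^*\mu_{f_{2,t}}$ for a set of $t$ of positive measure. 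The contradiction then comes from Theorem~\ref{transcendence} via Proposition~\ref{test function} and the local Hodge index theorem: this measure equality forces $\mathcal{C}_t$ to be weakly $\Phi_t$-special for uncountably many $t$, hence for a transcendental $t$, hence generically---contradicting the hypothesis on $\bc$. Note in particular that, contrary to what you write, neither Theorem~\ref{splitineq} nor Theorem~\ref{bogomolov hypothesis} uses Theorem~\ref{transcendence}; the latter enters only here, and is the key input that your scheme is missing.
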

         We point out here that in Theorem \ref{relative bogomolov iso} we establish an analogous statement in the case of curves $\mathbf{X}$ and non-isotrivial-free maps. Theorem \ref{relative bogomolov iso} is then used to establish Theorem \ref{umb}.
    	Theorems \ref{relative bogomolov} and \ref{relative bogomolov iso} fit in the theme of unlikely intersections and are inspired by the Pink-Zilber conjectures. We refer the reader to  \cite{Zannier:book} for an overview of related problems in the classical setting. 
    	More generally, the Pink-Zilber conjectures suggest that the following holds.  
    	\begin{conjecture}\label{rbc}
    		Let $\ell\ge 2$ and $\mathbf{F}:\mathbb{P}^{\ell}_1\to \mathbb{P}^{\ell}_1$ be an isotrivial-free split polarized endomorphism over $K$ with polarization degree $\ge 2$. 
    		Let $\mathbf{V}\subset \mathbb{P}^{\ell}_1$ be an irreducible projective subvariety  defined over $K$ and such that $\dim \mathbf{V} <\ell-1$.  
    		Assume that $\mathbf{V}$ is not contained in an $\mathbf{F}$-preperiodic subvariety other than $\bP_1^{\ell}$. 
    		Then there exists $\epsilon=\epsilon(\mathbf{F},\mathbf{V})>0$ such that 
    		\begin{align*}
    			\{P\in\mathcal{V}(\Qbar)~:~\hat{h}_{F}(P) < \epsilon\}
    		\end{align*} 
    		is not Zariski dense in $\mathcal{V}$, where $\hat{h}_{F}(\cdot)$ is the fiber-wise height as in \eqref{fiberwiseheight}.
    	\end{conjecture}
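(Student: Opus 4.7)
My plan is to induct on $\ell \ge 2$, using the authors' height inequality (Theorem \ref{splitineq}) and Theorem \ref{relative bogomolov} as the main inputs and reducing the general case via coordinate projections. The base case $\ell=2$ forces $\dim \mathbf{V} = 0$, so $\mathcal{V}$ is (up to Galois orbits) the graph of a single section $t \mapsto \mathbf{V}_t$; the assertion then reduces to the fact that for a non-$\mathbf{F}$-preperiodic closed point $\mathbf{V} \in \mathbb{P}_1^2(K)$, the function $t \mapsto \hat{h}_{F_t}(\mathbf{V}_t)$ is bounded below by a positive constant outside a finite set of $t \in B_0(\Qbar)$, which is a direct application of Theorem \ref{splitineq} to the section $\mathbf{V}$.

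For the inductive step $\ell \ge 3$, assume towards a contradiction that $S_\epsilon := \{P \in \mathcal{V}(\Qbar) : \hat{h}_F(P) \le \epsilon\}$ is Zariski dense in $\mathcal{V}$ for every $\epsilon > 0$. Since $\mathbf{F}=(\mathbf{f}_1,\ldots,\mathbf{f}_\ell)$ is split, for any index subset $I \subset \{1,\ldots,\ell\}$ the coordinate projection $\pi_I: \mathbb{P}_1^\ell \to \mathbb{P}_1^{|I|}$ intertwines $\mathbf{F}$ and the partial split map $\mathbf{F}_I=(\mathbf{f}_i)_{i \in I}$, and by the additivity in \eqref{fiberwiseheight} one has $\hat{h}_{F_I}(\pi_I(P)) \le \hat{h}_F(P)$ fiberwise. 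Thus $\pi_I(S_\epsilon)$ is a Zariski-dense set of small points in $\pi_I(\mathcal{V}) \subset B_0 \times \mathbb{P}_1^{|I|}$. Specializing to pairs $|I|=2$, the image $\mathbf{V}_I := \pi_I(\mathbf{V})$ is a point, an irreducible curve, or all of $\mathbb{P}_1^2$; the hypothesis $\dim \mathbf{V} \le \ell-2$ guarantees that for a nonempty family of pairs $I$ the image $\mathbf{V}_I$ is proper, and Theorem \ref{umb} (or Theorem \ref{relative bogomolov} in its relative form) then forces each such curve $\mathbf{V}_I$ to be weakly $\mathbf{F}_I$-special in the sense of Definition \ref{weakly special}.

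The main obstacle is to upgrade these pairwise constraints to a global preperiodic subvariety of $\mathbb{P}_1^\ell$ strictly containing $\mathbf{V}$, contradicting the hypothesis. For split polynomial maps this is essentially the Medvedev--Scanlon classification of irreducible preperiodic subvarieties, and pairwise preperiodic relations among coordinates assemble into a proper preperiodic subvariety containing $\mathbf{V}$. For general rational split maps a comparable classification is not in the literature, and will have to be built pair by pair: Theorem \ref{transcendence} characterizes each weakly special curve $\mathbf{V}_I$ via the coincidence of pullback measures of maximal entropy for $\mathbf{f}_i$ and $\mathbf{f}_j$, and these pairwise-special structures would then be patched by an irreducibility argument for $\mathcal{V}$ over its various projections. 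A secondary, more technical obstacle is that the restriction of $\mathbf{F}_I$ to $\mathbf{V}_I$ may behave isotrivially even when $\mathbf{F}$ is isotrivial-free; ruling out infinitely many such degenerations projection by projection should be accessible through Theorem \ref{splitineq}, in the same spirit as the authors' treatment of Theorem \ref{relative bogomolov}, though care is needed at the boundary $B \setminus B_0$ where fibers may degenerate.
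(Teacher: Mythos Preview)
The statement you are attempting is presented in the paper as a \emph{conjecture} (Conjecture~\ref{rbc}), not a theorem; the authors do not prove it in general and there is no proof to compare against. They establish only the product instance of Theorem~\ref{relative bogomolov}, where $\mathbf{V}=\mathbf{C}\times\mathbf{X}$ with $\mathbf{C}\subset\mathbb{P}_1^2$ a curve \emph{assumed} not weakly special. Your base case $\ell=2$, $\dim\mathbf{V}=0$ is correct and is exactly Call--Silverman plus Baker as packaged in Theorem~\ref{splitineq}; the rest is an attempt at an open problem.

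Beyond the patching obstacle you already acknowledge, your inductive step contains a concrete error. The claim that ``the hypothesis $\dim\mathbf{V}\le\ell-2$ guarantees that for a nonempty family of pairs $I$ the image $\mathbf{V}_I$ is proper'' is false: a generic complete-intersection surface $\mathbf{V}\subset\mathbb{P}_1^4$ cut out by two forms of multidegree $(1,1,1,1)$ has all six $2$-coordinate projections surjective onto $\mathbb{P}_1^2$, so no pairwise projection produces a curve to which Theorem~\ref{umb} could apply. More generally, invoking the conjecture inductively on $\pi_I(\mathbf{V})\subset\mathbb{P}_1^{|I|}$ requires $\dim\pi_I(\mathbf{V})<|I|-1$, which for generic $\mathbf{V}$ of the extremal dimension $\ell-2$ forces $|I|=\ell$ and collapses the induction. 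The patching difficulty you flag is also genuine and lacks a Medvedev--Scanlon analogue for non-polynomial $\mathbf{f}_i$; this is precisely why Theorem~\ref{relative bogomolov} is confined to products, where the non-weakly-special hypothesis on $\mathbf{C}$ lets Theorem~\ref{transcendence} be applied directly after equidistribution, with no need to reconstruct preperiodic containers from projections.
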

         Theorem \ref{relative bogomolov} is the first instance of Conjecture \ref{rbc} for $\mathbf{V}$ with positive dimension and in a strictly dynamical setting. 
         The only known case of Conjecture \ref{rbc} (and of \cite[Conjecture 1.2]{DGH:assumerbc}) in the case that $\dim \mathbf{V}\ge 1$ is K\"uhne's result \cite[Theorem 1]{kuhne:relative}, who established the conjecture when all components of $\mathbf{F}$ are Latt\`es maps, thereby extending \cite[Theorem 1.4]{DM1}. 
         For related results in the case that $\epsilon=0$ see also \cite{Masser:Zannier,Masser:Zannier:2,Masser:Zannier:nonsimple,Masser:Zannier:simpleA}.

    	That said, there has been a lot of work concerning cases of the conjecture for points $\mathbf{V}$. 
        Then it follows from Baker-DeMarco's conjecture \cite[Conjecture 1.10]{BD:polyPCF}, \cite[Conjecture 6.1]{D:stableheight}. 
        Its first case is established in the breakthrough article of Baker-DeMarco \cite{BD:preperiodic}. 
        A lot of work has centered around other special cases -- see \cite{DWY:Lattes, Ghioca:Hsia:Tucker, GHT:preprint, BD:polyPCF, DWY:Per1, Favre:Gauthier, Favre:Gauthier2, AO1, AO2, AO3}. 
        Most notably, recently Favre and Gauthier \cite{FGpoly} established the conjecture for points $\mathbf{V}\in \bP^2_1(K)$ and split maps $\mathbf{F}:\bP^2_1\to \bP^2_1$ acting by a polynomial on each coordinate.
   
   \smallskip 
   
	\noindent Our next results provide the necessary ingredients towards our proof of Theorem \ref{relative bogomolov}. 
    
    \subsection{Weakly special curves and measures of maximal entropy}
    For each rational map $f\in\bC(z)$ with $\deg f\ge 2$ there is an invariant measure $\mu_f$ supported on the Julia set of $f$ which was constructed by Lyubich \cite{Ly} and Freire-Lopes-Ma\~{n}\'{e} \cite{FLM}. It is the unique measure with no point masses and such that  $f^{*}\mu_f=(\deg f)\mu_f$. 
    Rigidity questions such as when do two rational maps share the same invariant measure have been studied extensively; see \cite{Ye} and the references therein for a relatively recent account.

    Levin and Przytycki \cite[Theorem A]{LP} have shown that the measure $\mu_f$ determines the rational map $f$ up to certain operations. 
    Our next result can be seen as a generalization of \cite[Theorem A]{LP}.
   It allows us to characterize weakly special curves by looking at measures $\mu_f$. 
   
   \begin{theorem}\label{transcendence}
   	Let $f_i\in \bC(z)$ be rational maps of degree $d\ge 2$ for $i=1,2$.
           Let $C \subset \mathbb{P}_1^2$ be an irreducible complex algebraic curve with dominant projections $\pi_i:C\to\bP_1$ for $i=1,2$. Then $\deg(\pi_2)\pi_1^*\mu_{f_1} = \deg(\pi_1)\pi_2^*\mu_{f_2}$
          if and only if $\text{supp}(\pi_1^*\mu_{f_1}) \cap  \text{supp}(\pi_2^*\mu_{f_2}) \neq \emptyset$ and  $C$ is $(f_1, f_2)$- weakly special.
   \end{theorem}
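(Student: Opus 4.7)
The strategy is to address the two directions separately; the backward implication is essentially a verification using uniqueness of maximal-entropy measures and the uniformization of exceptional maps, while the forward implication is the main content and rests on a curve-version of the Levin-Przytycki rigidity theorem.

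Begin with the backward direction. The dominance hypothesis on $\pi_1,\pi_2$ rules out case (1) of Definition~\ref{weakly special}. In case (2), after replacing $C$ by a sufficiently high forward iterate one may assume $C$ is periodic under $\phi := (f_1,f_2)$, so that an iterate $\phi^N|_C$ is a self-map of $C$ that is semiconjugate via $\pi_i$ to $f_i^N$. Both normalized pullback measures $\pi_i^{\ast}\mu_{f_i}/\deg(\pi_i)$ are then invariant probability measures of maximal entropy for $\phi^N|_C$, and Lyubich's uniqueness theorem forces them to coincide. In case (3), use that $\mu_{f_i} = (\varphi_{f_i})_{\ast}\nu_i$, where $\nu_i$ is the normalized Haar measure on the maximal compact subgroup of $G_{f_i}$; the corresponding Haar measure on the coset $H$ pushes forward via $(\varphi_{f_1}\times\varphi_{f_2})\circ(\Psi_{f_1}\times\Psi_{f_2})$ to a single measure on $C$ whose projections realize $\mu_{f_1}$ and $\mu_{f_2}$ with the correct normalizations. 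The nonemptiness of the support intersection is a consistency check in each case.

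For the forward direction, encode the measure equality through the dynamical Green's functions $g_{f_i}$: the identity rewrites as $dd^c u = 0$ on the smooth locus of $C$, where $u := \deg(\pi_2)\, g_{f_1}\!\circ\pi_1 - \deg(\pi_1)\, g_{f_2}\!\circ\pi_2$, so $u$ is pluriharmonic off logarithmic singularities supported over the exceptional points of the $f_i$. Using the escape-rate formula $g_{f_i} = \lim_{n\to\infty} d^{-n}\log^+|f_i^n|$, this pluriharmonicity propagates to iterated preimages and shows that the same identity of normalized pullback measures holds on every irreducible component $C_n$ of $\phi^{-n}(C)$. If $C$ is not $\phi$-preperiodic the family $\{C_n\}_{n\ge 1}$ is infinite and carries a uniform rigidity structure.

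The principal obstacle is upgrading this analytic rigidity to the algebraic classification. The goal is to show that such an infinite family forces both Julia sets $J(f_i)$ to admit an invariant affine structure in the sense of Zdunik, which by Zdunik's and Eremenko's classification happens only if both $f_i$ are exceptional. One then lifts $C$ via $(\Psi_{f_1}\times\Psi_{f_2})$ to the universal covers $G_{f_1}\times G_{f_2}$; the lift is a closed analytic subset stable under a large semigroup of translations coming from inverse branches of $\Psi_{f_1}\times\Psi_{f_2}$, and an Ax-Lindemann or Ritt-style algebraicity argument identifies its irreducible components as cosets, placing $C$ in case (3) of Definition~\ref{weakly special}. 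The most delicate point, and where the genuine extension beyond Levin-Przytycki lies, is verifying that the rigidity extracted from the iteration is not an artifact of the branch loci of $\pi_1, \pi_2$: one must carefully track the contribution of the ramification when pulling back measures of maximal entropy through a curve that is not itself a graph.
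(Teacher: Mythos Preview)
Your forward direction has a genuine gap at its core. You correctly isolate the crucial step---showing that if the measure identity holds and the $f_i$ are ordinary then $C$ must be $(f_1,f_2)$-preperiodic---but your proposed mechanism does not deliver it. Propagating the identity to components of $\phi^{-n}(C)$ and asserting that an infinite family of such curves ``forces both Julia sets to admit an invariant affine structure in the sense of Zdunik'' is precisely the statement to be proved, not an argument for it: Zdunik's theorem characterizes exceptional maps through the relation of $\mu_f$ to Hausdorff measure, and there is no evident way to extract that from a family of curves satisfying a pullback relation. (There is also a normalization issue: on a component $C'\subset\phi^{-1}(C)$ the degrees of the projections $\pi_i|_{C'}$ need not be proportional to $\deg\pi_i$, so the ``same identity'' does not automatically hold with the correct constants.)

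The paper's route is entirely different and local rather than global. One fixes a point $p_0\in C$ with $\pi_i(p_0)$ in a positive-measure set $E_{f_i}\subset J_{f_i}$ on which well-controlled univalent inverse branches of high iterates exist (this uses positivity of the Lyapunov exponent and ergodicity of $\mu_{f_i}$, as in \cite{LP}). Writing $s$ for a local branch of $\pi_2\circ\pi_1^{-1}$ near $\pi_1(p_0)$, one forms the compositions $h_n=s^{-1}\circ f_2^{k_n^2}\circ s\circ f_1^{-k_n^1}$ on a fixed ball, and uses Koebe's $\tfrac14$-theorem repeatedly to show they constitute a nontrivial normal family of symmetries of $J_{f_1}$ (the measure identity enters to verify the scaling condition in the definition of symmetry). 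Levin's theorem then forces this family to be eventually constant, yielding a functional equation $f_2^{M+k}\circ s\circ f_1^{-(N+\ell)}=f_2^{M}\circ s\circ f_1^{-N}$; analytic continuation of the resulting arc identity, together with B\'ezout, gives $(f_1^{N+\ell},f_2^{M+k})(C)=(f_1^N,f_2^M)(C)$. This is the missing engine in your outline. Your backward direction via uniqueness of the maximal-entropy measure for $\phi^N|_C$ is plausible in spirit (and different from the paper's density-of-preperiodic-points plus specialization argument), but note that ``Lyubich's uniqueness theorem'' is stated for $\mathbb{P}^1$; you would need the analogous statement for polarized endomorphisms of curves, and the passage from preperiodic $C$ to periodic $C$ requires checking that the measure identity pulls back correctly along $\phi^k:C\to\phi^k(C)$.
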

   It is worth remarking that the pull-back of a measure is defined in the sense of distributions. For example if $B \subset C(\C)$ is a Borel set on which $\pi_i$ is injective, then $\pi_i^*\mu_{f_i}(B) = \mu_{f_i}(\pi_i(B))$.
   The condition $\text{supp}(\pi_1^*\mu_{f_1}) \cap  \text{supp}(\pi_2^*\mu_{f_2}) \neq \emptyset$ seems artificial but is  required if both $f_1, f_2$ are exceptional and $G_{f_1} = G_{f_2} = \mathbb{G}_m$. This is also the only case for which it is required. In fact we establish a more general statement, not assuming that $f_1$ and $f_2$ have the same degree; see Theorem \ref{transcendence2}. We note that Theorem \ref{transcendence2} may be seen as an Ax-Lindemann type result for dynamical systems. It states that if we have a linear relation between the measures on an algebraic curve, then this curve has to be weakly special. If the maps $f_1,f_2$ are exceptional, this follows (almost) directly from Ax-Lindemann for algebraic groups, as we will see in our proof.     
   	Assuming further that there exists a point $\la \in C(\mathbb{C})$ such that  $\pi_1(\la)$ is a repelling $f_1$-periodic point  and $\pi_2(\la) $ is $f_2$-preperiodic, a related result is obtained in \cite[Theorem 5.1]{dmm1} and was used to establish the dynamical Bogomolov conjecture. 
   Showing that such a point exists is non-trivial. In \cite{dmm1} the authors infer it from Mimar's main theorem \cite{Mimar} in view of their assumption that $C$ contains infinitely many $(f_1,f_2)$-preperiodic points. But in order to establish the uniform statement in Theorem \ref{umb} we cannot operate under this helpful assumption.
   
   \subsection{A height inequality and a geometric dynamical Bogomolov theorem}\label{introgeom}
	Finally, in Theorem \ref{bogomolov hypothesis}, we establish a  geometric Bogolomov theorem for split maps over $1$-dimensional function fields of characteristic zero. We point out here that Gauthier and Vigny have formulated a general geometric dynamical Bogomolov conjecture \cite[Conjecture 1]{GV} and have reduced it to an elegant complex analytic problem \cite[Theorem B]{GV}.  
    This leads us to an inequality comparing the fiber-wise Call-Silverman canonical height with a height on the base and valid for most points of a non-preperiodic subvariety as in Theorem \ref{splitineq}.  
	For a split polarized endomorphism $\bbphi$ over $K$, we let 
	$$\mathcal{X}^{\bbphi,\star}=\mathcal{X}\setminus\displaystyle\bigcup_{\mathcal{Z}}\mathcal{Z},$$
	where the union runs over all subvarieties $\mathcal{Z}$ of $\mathcal{X}$ that are irreducible components of $\Phi$-special subvarieties projecting dominantly onto the base $B$. 
	
	\begin{theorem}\label{splitineq}
		Let $\ell\ge 2$ and $\bbphi:\mathbb{P}^{\ell}_1\to \mathbb{P}^{\ell}_1$ be an isotrivial-free split polarized endomorphism over $K$ with polarization degree at least equal to $2$. 
		Let $\mathbf{X}\subset \mathbb{P}^{\ell}_1$ be an irreducible projective subvariety  defined over $K$ and let $\mathcal{N}$ be an ample line bundle on $B$. Then exactly one of the following holds:
		\begin{enumerate}
			\item 
			Either $\mathcal{X}^{\bbphi,\star}=\emptyset$ and $\mathbf{X}$ is $\bbphi$-preperiodic
            ; or 
			\item 
			the set 
			$\mathcal{X}^{\bbphi,\star}$ is Zariski open and dense in $\mathcal{X}$ and there are constants $c_1,c_2>0$ depending on $\bbphi$, $\mathbf{X}$ and $\mathcal{N}$ and such that 
			\begin{align}\label{lower bound}
				\hat{h}_{\Phi}(P) \ge c_{1}h_{\mathcal{N}}(\pi(P))-c_{2}, \text{ for all }P\in\mathcal{X}^{\bbphi,\star}(\Qbar).
			\end{align}
		\end{enumerate} 
	\end{theorem}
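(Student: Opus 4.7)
The plan is to deduce the theorem from the geometric dynamical Bogomolov theorem over the function field $K=\Qbar(B)$ (Theorem \ref{bogomolov hypothesis}) combined with a Call--Silverman type specialization argument on the total space $\mathcal{X}$, in the spirit of Habegger's proof of \cite[Theorem 1.3]{Habegger:special} in the abelian setting.

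I would first dispose of the dichotomy (1) versus (2). Irreducible components of $\Phi$-special subvarieties of $\mathcal{X}$ projecting dominantly onto $B$ correspond, by passage to the generic fiber, to $\bbphi$-special subvarieties of $\mathbf{X}$ over $K$. Combining a structural description of $\bbphi$-special subvarieties (where the rigidity result Theorem \ref{transcendence} controls the ordinary components and an explicit classification via isogenies of algebraic groups handles the exceptional ones) with the fact that $\mathbf{X}$ contains only finitely many maximal $\bbphi$-preperiodic subvarieties, one obtains that the union of all special subvarieties contained in $\mathbf{X}$ is a proper Zariski closed subset of $\mathbf{X}$ unless $\mathbf{X}$ itself is $\bbphi$-preperiodic. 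In the latter case $\hat{h}_{\bbphi}\equiv 0$ on $\mathbf{X}(\overline{K})$ and $\mathcal{X}^{\bbphi,\star}=\emptyset$, giving alternative (1); in the former $\mathcal{X}^{\bbphi,\star}$ is Zariski open and dense in $\mathcal{X}$.

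Assume now $\mathbf{X}$ is not $\bbphi$-preperiodic. Theorem \ref{bogomolov hypothesis} applied to the isotrivial-free endomorphism $\bbphi$ over $K$ yields $\delta>0$ such that
\[
\{y\in \mathbf{X}(\overline{K}) : \hat{h}_{\bbphi}(y)<\delta\}
\]
is not Zariski dense in $\mathbf{X}$, with Zariski closure contained in a finite union of proper $\bbphi$-special subvarieties of $\mathbf{X}$. Combined with the previous step, any $y\in \mathbf{X}(\overline{K})$ not lying in any proper $\bbphi$-special subvariety satisfies $\hat{h}_{\bbphi}(y)\ge \delta$.

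The final step converts this essential-minimum bound on the generic fiber into the uniform inequality \eqref{lower bound} on $\mathcal{X}^{\bbphi,\star}$. For any $P\in \mathcal{X}^{\bbphi,\star}(\Qbar)$, an embedding of its residue field into $\overline{K}$ produces a corresponding point $y_P\in \mathbf{X}(\overline{K})$, and the Call--Silverman specialization theorem \cite{Call:Silverman} applied coordinate-wise gives
\[
\hat{h}_{\Phi_{\pi(P)}}(P) = \hat{h}_{\bbphi}(y_P)\, h_{\mathcal{N}}(\pi(P)) + o\bigl(h_{\mathcal{N}}(\pi(P))\bigr)
\]
as $h_{\mathcal{N}}(\pi(P))\to\infty$, but with an error term depending a priori on $y_P$. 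The principal obstacle is to remove this dependence and obtain a uniform comparison. To do this I would construct, via a Tate limit on a projective model of $\mathcal{X}$, an adelically metrized line bundle $\widehat{\mathcal{L}}_\Phi$ whose associated height equals $\hat{h}_{\Phi}$ and which compares, up to a term $O(h_{\mathcal{N}}(\pi(\cdot))+1)$ uniform in the point, with the Weil height of an ample line bundle on $\mathcal{X}$. Bigness of a suitable twist $\widehat{\mathcal{L}}_\Phi - c_1\pi^*\widehat{\mathcal{N}}$ on the complement of the special locus, guaranteed by the positive essential minimum $\delta$ via Zhang's successive minima, then yields \eqref{lower bound}. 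The delicate point is ensuring that the bigness exceptional locus matches exactly $\mathcal{X}\setminus\mathcal{X}^{\bbphi,\star}$, which is supplied by the structural description of special subvarieties from step one.
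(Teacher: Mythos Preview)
Your overall architecture is right: the dichotomy comes from the geometric Bogomolov theorem (Theorem~\ref{bogomolov hypothesis}), and then a bigness argument on the total space produces the inequality outside a proper Zariski closed subset. But there is a genuine gap in your final step.

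You assert that the ``delicate point'' is to arrange that the bigness exceptional locus of $\widehat{\mathcal{L}}_\Phi - c_1\pi^*\widehat{\mathcal{N}}$ coincides exactly with $\mathcal{X}\setminus\mathcal{X}^{\bbphi,\star}$. There is no mechanism to ensure this, and it is not how the paper proceeds. What one actually obtains (from Theorem~\ref{assume bog} in the appendix, or from \cite[Theorem~6.5]{Yuan:Zhang:new}) is the inequality \eqref{lower bound} outside \emph{some} proper Zariski closed $\mathcal{V}\subset\mathcal{X}$, with no control over how $\mathcal{V}$ relates to the special locus. The paper closes this gap by \emph{induction on $\dim\mathcal{X}$}: decompose $\mathcal{V}$ into irreducible components $\mathcal{V}_1,\ldots,\mathcal{V}_s$, apply the theorem to each $\mathcal{V}_i$ (whose dimension is strictly smaller), and observe that $\mathcal{V}_i\cap\mathcal{X}^{\bbphi,\star}\subset\mathcal{V}_i^{\bbphi,\star}$. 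Adjusting the constants finitely many times yields the inequality on all of $\mathcal{X}^{\bbphi,\star}$. The base case $\dim\mathcal{X}=1$ is immediate since $\mathcal{V}$ is then a finite set of points, absorbed into $c_2$. This inductive descent is the missing idea in your proposal.

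A second, smaller issue: the sentence ``an embedding of its residue field into $\overline{K}$ produces a corresponding point $y_P\in\mathbf{X}(\overline{K})$'' is not correct. A point $P\in\mathcal{X}^{\bbphi,\star}(\Qbar)$ lives in a single fiber $\mathcal{X}_t$ over some $t\in B_0(\Qbar)$; its residue field is a number field, and embedding it into $\overline{K}$ does not produce a section of the family nor a $\overline{K}$-point of $\mathbf{X}$ to which Call--Silverman specialization would apply. The Call--Silverman formula runs in the opposite direction (from $\overline{K}$-points to their specializations), so this paragraph does not do what you want. You were right to abandon it for the bigness approach; just note that the bigness argument already encodes the needed uniformity, and the remaining work is the induction above rather than matching loci.

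Finally, invoking Theorem~\ref{transcendence} in the dichotomy step is unnecessary: finiteness of maximal $\bbphi$-preperiodic subvarieties of $\mathbf{X}$ (hence openness and density of $\mathcal{X}^{\bbphi,\star}$) follows directly from the geometric Bogomolov Theorem~\ref{bogomolov hypothesis} via Gubler's inequality (Lemma~\ref{zhang ineq}).
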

	
    As an immediate corollary, we obtain uniformity in the dynamical Bogomolov result \cite{dmm2} across $1$-parameter families of isotrivial-free split rational maps and varieties; albeit under the assumption that their height is large enough.

    	\begin{corollary}\label{emptyMM}
    		Let $\ell\ge 2$ and $\bbphi:\mathbb{P}^{\ell}_1\to \mathbb{P}^{\ell}_1$ be an isotrivial-free split polarized endomorphism with polarization degree at least $2$. Let $\mathcal{N}$ be an ample line bundle on $B$.
    		Let $\mathbf{X}\subset \mathbb{P}^{\ell}_1$ be an irreducible subvariety  that is not $\bbphi$-preperiodic. Then there are constants $c_{3}=c_3(\bbphi,\mathbf{X},\mathcal{N})>0$ and $c_4=c_{4}(\bbphi,\mathbf{X},\mathcal{N})>0$ such that 
    		\begin{align}\label{empty}
            \left\{z\in \mathcal{X}_t(\Qbar)\cap\mathcal{X}^{\bbphi,\star}(\Qbar) ~:~\hat{h}_{\Phi_t}(z)\le c_4\max\{h_{\mathcal{N}}(t),1\} \right\}=\emptyset,
            \end{align}
    		for all $t\in B_0(\Qbar)$ with $h_{\mathcal{N}}(t)>c_3$.
    	\end{corollary}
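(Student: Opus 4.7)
The plan is to read Corollary \ref{emptyMM} as a direct consequence of Theorem \ref{splitineq}, with the only extra work being a routine height comparison on the base curve $B$.

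First, since $\mathbf{X}$ is by hypothesis not $\bbphi$-preperiodic, alternative (1) of Theorem \ref{splitineq} is excluded and we are in alternative (2). Thus $\mathcal{X}^{\bbphi,\star}$ is Zariski open and dense in $\mathcal{X}$, and for any chosen ample line bundle $\mathcal{N}$ on $B$ there exist constants $c_1,c_2>0$ (depending only on $\bbphi$, $\mathbf{X}$, and $\mathcal{N}$) such that
\[
\hat{h}_\Phi(P) \ge c_1\, h_\mathcal{N}(\pi(P)) - c_2 \quad \text{for every } P \in \mathcal{X}^{\bbphi,\star}(\Qbar).
\]
I would fix any such $\mathcal{N}$ and recall that the Weil height $h$ on $B$ is attached to some ample line bundle via a fixed projective embedding of the projective curve $B$. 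Standard height comparison on $B$ then yields constants $a_1,a_2>0$ with $h_\mathcal{N}(t) \ge a_1 h(t) - a_2$ for every $t\in B(\Qbar)$.

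Combining these two inequalities gives, for every $(t,z)\in \mathcal{X}^{\bbphi,\star}(\Qbar)$,
\[
\hat{h}_{\Phi_t}(z) = \hat{h}_\Phi(t,z) \ge c_1 a_1\, h(t) - (c_1 a_2+c_2).
\]
Set $c_4 := c_1 a_1/2$ and $c_3 := \max\{1,\ 2(c_1 a_2+c_2)/(c_1 a_1)\}$. Whenever $h(t)>c_3$, the right-hand side strictly exceeds $c_4\, h(t) = c_4\,\max\{h(t),1\}$, so no point $z$ with $(t,z)\in\mathcal{X}^{\bbphi,\star}(\Qbar)$ and $\hat{h}_{\Phi_t}(z)\le c_4\max\{h(t),1\}$ can exist. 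This is exactly the emptiness of the set in \eqref{empty}. Since all dynamical substance is packaged inside Theorem \ref{splitineq} and the derivation above is an elementary manipulation of heights, I do not anticipate any genuine obstacle here; the proof should be short and mechanical once Theorem \ref{splitineq} is established.
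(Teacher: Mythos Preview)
Your proposal is correct and follows exactly the approach implicit in the paper: the paper does not give a separate proof of Corollary \ref{emptyMM} but simply declares it an ``immediate corollary'' of Theorem \ref{splitineq}, and your argument spells out precisely how that deduction goes. The only cosmetic point is that since the height $h$ on $B$ is itself a Weil height associated to an ample divisor, you could take $\mathcal{N}$ to be that divisor and skip the comparison step; either way the argument is routine.
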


Theorem \ref{splitineq} is a dynamical generalization of Habegger's result \cite[Theorem 1.3]{Habegger:special} which dealt with the case that all components $\bbf_i$ of $\bbphi$ are Latt\`es maps associated to an elliptic curve. 
	It should be noted that a first conditional result in the direction of Theorem \ref{splitineq}, dealing with the case that  $\mathbf{X}$ is a point, is established by Call-Silverman \cite{Call:Silverman}. 
	Under the assumption that the geometric (over the function field $K$) canonical height of $\mathbf{X}$ under $\bbphi$ is non-zero, they derive an inequality such as \eqref{lower bound}. 
	In order to then get the result in Theorem \ref{splitineq} in this case, it is challenging to characterize the points of non-zero geometric canonical height as non-preperiodic. This issue has been clarified by Baker \cite{Baker:finiteness} following work of Benedetto \cite{Ben:finiteness}; see also \cite{D:stableheight}. Therefore, in the case that $\mathbf{X}$ is a point, Theorem \ref{splitineq} follows by combining the results in \cite{Call:Silverman} and \cite{Baker:finiteness}. 
	And to prove Theorem \ref{splitineq} in general, we need to generalize the results in both \cite{Call:Silverman} and \cite{Baker:finiteness} to higher dimensions. 
	To this end we provide a proof generalizing Call-Silverman's result in the appendix; see Theorem \ref{assume bog}. A related result sufficient for our purposes has also been established recently by Yuan and Zhang \cite{Yuan:Zhang:new}. 
	To generalize Baker's result to higher dimensional subvarieties, we establish a geometric dynamical Bogomolov theorem for split maps; see Theorem \ref{bogomolov hypothesis}. 
   
     It is interesting to note that Habegger's proof \cite[Theorem 1.3]{Habegger:special} of Theorem \ref{splitineq} for exceptional maps coming from an elliptic curve  did not rely on the analogous geometric Bogomolov theorem, though the latter had at the time already been established by Gubler \cite{Gubler:split}. 
   Instead, Habegger used local monodromy, the Betti map and Raynaud's theorem on the Manin-Mumford over $\bC$ \cite{Raynaud:1}. As a corollary, he then deduced a new proof of the geometric Bogomolov conjecture for powers of an elliptic curve over $1$-dimensional function fields. 
   This approach was later generalized to families of abelian varieties by Gao and Habegger  \cite[Theorem 1.1]{Gao:Habegger:bog} who thus established the geometric Bogomolov conjecture over $1$-dimensional function fields of characteristic zero as a consequence of a height inequality  \cite[Theorem 1.4]{Gao:Habegger:bog}. The geometric Bogomolov conjecture for function fields of arbitrary dimension was eventually established in \cite{CGH} with a different method.
   Finally  the inequality  \cite[Theorem 1.4]{Gao:Habegger:bog} has been generalized to families of abelian varieties over a base of arbitrary dimension \cite{DGH:uml}. This result coupled with the results of Gao on the mixed Ax-Schanuel conjecture \cite{Gao:generic rank, Gao:mixed}, played a key role in establishing uniformity in the Mordell-Lang conjecture for curves of large height \cite{DGH:uml}. 
   We refer the reader to the survey article \cite{Gao:survey} for a beautiful outline of these developments.

\subsection{A discussion of the proof of Theorems \ref{umb} and \ref{relative bogomolov}}
Let us now briefly describe the strategy of our proof of Theorems \ref{umb} and \ref{relative bogomolov}. 
First we point out that an application of Bézout's theorem allows us to deduce Theorem \ref{umb} from a special case of Theorem \ref{relative bogomolov} and its complimentary Theorem \ref{relative bogomolov iso} in the case that $\bbphi$ has an isotrivial component. We point out  that Dimitrov, Gao and Habegger \cite{DGH:assumerbc} combined the classical relative Bogomolov conjecture \cite[Conjecture 1.2]{DGH:assumerbc} with their work in \cite{DGH:uml} to establish uniformity in the classical Mordell-Lang and Bogomolov problems. 

To prove Theorem \ref{relative bogomolov}, we first use Theorem \ref{splitineq}. 
This allows us to apply the recent arithmetic equidistribution theorem of Yuan and Zhang \cite{Yuan:Zhang:new}; cf. Gauthier's and K\"uhne's equidistribution results \cite{Gauthier:equi, kuhne:uml}. And we do so multiple times -- a very fruitful idea in Bogomolov-type problems that was first employed by Ullmo \cite{Ullmo:Bogomolov}. 
To leverage the output of this equidistribution result, we use the theory of currents as well as a local Hodge index theorem \cite{YZ:index}. Our presentation using currents is mainly inspired by Gauthier and Vigny's work \cite{GV}. Perhaps surprisingly, these tools allow us to reduce Theorem \ref{relative bogomolov} to the complex analytic Theorem \ref{transcendence}, which is an Ax--Lindemann--type result for the fibers of the family. 
This comes in stark contrast with K\"uhne's proofs in \cite{kuhne:relative,kuhne:uml}, which relied on a far more sophisticated `functional transcendence' result, known as the  Ax-Schanuel theorem for mixed Shimura varieties \cite{Gao:mixed}.
This result is not available in the dynamical setting (the proof of Ax-Schanuel uses tame geometry). Consequently, we also obtain a simplified proof of special cases of K\"uhne's aforementioned results. It might be interesting to note that the proof of \cite{DKY:uni}, which built on the strategy from \cite{DKY:ec}, requires careful estimates of a certain energy pairing at each Berkovich space corresponding to a place of $\bQ$. Such estimates are non-trivial, especially in the setting of non-polynomial maps. Our proof bypasses these issues. Indeed, it only requires the complex analytic Theorem \ref{transcendence}, which ensures that certain archimedean energy pairings are non-zero; see Proposition \ref{test function}.
\subsection{Structure of the paper}
The structure of this paper is as follows. 
In \S\ref{background} we briefly recall some background and terminology that we use throughout this article. 
In \S\ref{functional transcendence} we establish Theorem \ref{transcendence} in the stronger form of Theorem \ref{transcendence2}. 
In \S\ref{functional bogomolov} we establish Theorem \ref{splitineq} as well as an analogous result for curves $\mathbf{X}$ and maps $\bbphi$ having an isotrivial component; see Theorem \ref{splitineqiso}. We do so by establishing the geometric dynamical Bogomolov theorems \ref{bogomolov hypothesis} and \ref{mainiso}. 
In \S\ref{fighting the currents} we combine our results to establish Theorem \ref{relative bogomolov} and its complimentary Theorem \ref{relative bogomolov iso}. 
We deduce Theorem \ref{umb} as a consequence of Theorem \ref{relative bogomolov iso} and of a special case of Theorem \ref{relative bogomolov}. 
We further deduce Corollary \ref{small curves}. 
In \S\ref{applications} we establish Theorem \ref{moduli:curve}.
Finally, in the appendix we generalize Call-Silverman's specialization result \cite[Theorem 4.1]{Call:Silverman} to higher dimensional subvarieties.

\smallskip

	%%%%%%%%%%%%%%%%%%%%%%%%%%
	%%%%%%%%%%%%%%%%%%%%%%%%%%
	\textbf{Acknowledgements:} 
    The first-named author would like to thank Laura DeMarco and Lars K\"uhne for many interesting conversations about topics related to this article over the last few years. We heartily thank Robert Wilms for many enlightening discussions surrounding adelic line bundles. We thank Laura DeMarco, Dragos Ghioca, Thomas Gauthier,  Curtis McMullen, Khoa Nguyen and Gabriel Vigny  for valuable comments and suggestions on a previous version of this paper that greatly improved the current presentation. 
    We would also like to thank the University of Basel for the great working environment it provided during the initial stages of preparation of this article. Myrto Mavraki's research was supported in part with funding from the National Science Foundation. 
Finally, we are grateful to the referees for their very careful reading of the article and their very helpful suggestions. 
	\section{Preliminaries} \label{background}
	
	A notion of adelic metrized line bundles was introduced by Zhang \cite{Zhang small} in the number field case and by Gubler \cite{Gubler:equi} in the function field case. 
	We begin by briefly summarizing the relevant terms for our article. 
	For a more exhaustive introduction we direct the readers to the survey articles of Chambert-Loir \cite{ChambertLoir:survey} and Yuan \cite{Yuan:survey} as well as to the recent book by Yuan and Zhang \cite{Yuan:Zhang:new} and the references therein. 
    An excellent source are also the appendices of \cite{YZ:index}.
	
    First let $\mathfrak{K}$ be an algebraically closed field, complete with respect to a non-trivial absolute value $|\cdot|$.  
    Let $Y$ and $L$ be a projective variety, respectively a line bundle on $Y$ both defined over $\mathfrak{K}$. 
    Let $Y^{\mathrm{an}}$ denote the Berkovich analytification of $Y$ and $L^{\mathrm{an}}$  the analytification of $L$ as a line bundle on $Y^{\mathrm{an}}$. 
    For $x\in Y^{\mathrm{an}}$, we let $\mathfrak{K}(x)$ be its residue field. 
    A \emph{continuous metric} $\|\cdot\|$ on $Y$ is defined to be a collection of $\mathfrak{K}(x)-$norms $\|\cdot\|_x$ one on each fiber $L^{\mathrm{an}}(x)$ for $x\in Y^{\mathrm{an}}$ such that for each rational section $s$ of $L$ the map $x\mapsto  \|s(x)\|_x:=\|s(x)\|$ is continuous on $Y^{\mathrm{an}}$ away from the poles of $s$. The pair $(L,\|\cdot\|)$ is called a (continuous) metrized line bundle on $Y$. A metric is called \emph{semipositive} if it is a uniform limit of semipositive model metrics as in \cite[Definition 5.1]{YZ:index}. Limits of metrics are taken with respect to the topology induced by the supremum norm. 
    A metrized line bundle $L$ on $Y$ is \emph{integrable} if it can be written as $\overline{L}=\overline{L}_1-\overline{L}_2$, where $\overline{L}_i$ are semipositive metrized line bundles on $Y$.
    
    Associated to a semipositive metrized line bundle $\overline{L}$, there is  a curvature form  $c_1(\overline{L})$, defined on  $Y^{\mathrm{an}}$.  One can then define a positive measure $c_{1}(\overline{L})^{\dim(Y)}$ on $Y^{\mathrm{an}}$. In the archimedean places the measure $c_{1}(\overline{L})^{\dim(Y)}$  is constructed by Bedford and Taylor \cite{BedfordTaylor}.
    In the non-archimedean places the curvature forms and associated measures are constructed by Chambert-Loir and Ducros \cite{CLD} and by Chambert-Loir \cite{ChambertLoir:equidistribution} respectively. 
    	We refer the reader to \cite{ChambertLoir:survey} for a more detailed exposition. 
   
    By Chambert-Loir \cite{ChambertLoir:equidistribution}, Gubler \cite{Gubler:local} and Chambert-Loir-Thuillier \cite{CLT} we can define local heights of subvarieties $Z$ of $Y$, though not uniquely, which can be computed through integration against measures obtained from the aforementioned curvature forms. More precisely, if  $n=\dim Z$, and for $i=0,\ldots,n$ we let $\overline{L}_i$ be integrable metrized line bundles with sections $\ell_i$ having disjoint support, then the local height of $Z$ with respect to $(\ell_0,\ldots,\ell_n)$ can be defined inductively through 
    	\begin{align}
    		\begin{split}
    			\widehat{\di(\ell_0)}\cdots\widehat{\di(\ell_n)}\cdot [Z]&:=\widehat{\di(\ell_1)}\cdots\widehat{\di(\ell_n)}\cdot [\di(\ell_0)\cdot Z]\\&-\int_{Y^{\mathrm{an}}}\log\|\ell_0\|c_1(\overline{L}_1)\cdots c_1(\overline{L}_n)\delta_{Z}.
    		\end{split}
    	\end{align}
    	A summary for intersection numbers sufficient for our purposes can be found in \cite{YZ:index}. 
    	We point out here that in the special case that $Y=C$ is a curve and $\overline{L}_0,\overline{L}_1$ are defined on trivial line bundles $L_0=L_1=\mathcal{O}_C$, we may use the regular sections $1_{L_0}$ and $1_{L_1}$ to define the local  intersection number of the metrics of $L_0$ and $L_1$, as follows
    	\begin{align}\label{local pairing}
    		\overline{L}_0\cdot \overline{L}_1:=\widehat{\di(1_{L_0})}\cdot\widehat{\di(1_{L_1})}\cdot [C].
    	\end{align}
    
    Now let $\mathcal{F}$ be either a number field or the function field of a smooth projective curve defined over $\Qbar$. 
	Denote by $M_{\mathcal{F}}$ the set of places $v$ of $\mathcal{F}$ normalized so that the product formula 
	\begin{align}
		\displaystyle\prod_{v\in M_{\mathcal{F}}} |x|^{n_v}_v=1
	\end{align}
	holds for each nonzero $x\in \mathcal{F}$. Here $n_v=[\mathcal{F}_v:\mathbb{Q}_v]/[\mathcal{F}:\mathbb{Q}]$ if $\mathcal{F}$ is a number field, whereas $n_v=1$ in the function field case. For each $v\in M_{\mathcal{F}}$ we denote by $\mathcal{F}_v$ a completion of $\mathcal{F}$ with respect to $v$ and write  $\overline{\mathcal{F}_v}$ for its algebraic closure and $\bC_v$ for its completion.
	
	Let $Y$ be an irreducible projective variety over $\mathcal{F}$ and $L$ a line bundle on $Y$. 
	A (continuous) \emph{semipositive adelic metric} on $L$ is a collection $\{\|\cdot\|_v~:~v\in M_{\mathcal{F}}\}$ of  continuous semipositive $\mathbb{C}_v$-metrics $\|\cdot\|_v$ on $L$ over all places $v$ of $\mathcal{F}$ satisfying certain coherence conditions; \cite{Yuan:survey, Gubler:equi}. 
	We denote the pair of our line bundle and the family of metrics by 
	$$\overline{L}:=(L, \{\|\cdot\|_v\})$$ and call $\overline{L}$ an \emph{adelic (semipositive) metrized line bundle}. 
    An adelic metric on $L$ is called \emph{integrable} if it is the difference of two semipositive adelic metrics. 
    	Following \cite{YZ:index}, we denote the space of integrable adelic metrized line bundles on $Y$ by $\widehat{\mathrm{Pic}}(Y)_{\mathrm{int}}$. 

	For this article, the most important example of an adelic semipositive metrized line bundle is the so-called \emph{$\phi$-invariant metric} $\overline{L}_{\phi}$ associated to an endomorphism $\phi:Y\to Y$ defined over $\mathcal{F}$ with $\phi^{*}L\simeq L^{\otimes d}$ for some $d\ge 2$. This metric was defined by Zhang \cite{Zhang small} following Tate's limiting argument. 
   
	Zhang \cite[Theorem (1.4)]{Zhang small}, Gubler \cite{Gubler:local,Gubler:split, Gubler:equi} and Chambert-Loir-Thuillier \cite{CLT} developed a height theory for subvarieties and an intersection theory for integrable adelic metrized line bundles for number fields and function fields respectively. 
	So denoting the set of $n\ge 0$ dimensional cycles of $Y$ by $Z_n(Y)$, their respective intersection numbers are written as
	\begin{align}\label{intersection}
		\begin{split}
			\widehat{\mathrm{Pic}}(Y)_{\mathrm{int}}^{n+1}\times Z_n(Y)&\to \mathbb{R}\\
			(\overline{L}_0,\cdots,\overline{L}_n,D)&\mapsto \overline{L}_0\cdots\overline{L}_n\cdot D=\overline{L}_0|_D\cdots\overline{L}_n|_D.
		\end{split}
	\end{align}
	When $D=Y$ we omit it from the above notation. 
	
	Using the intersection numbers in \eqref{intersection} one can give a notion for the height of a subvariety associated to any $\overline{L}\in \widehat{\mathrm{Pic}}(Y)_{\mathrm{int}}$. More specifically, if $Z$ is an irreducible closed subvariety of $Y$ defined over $\overline{\mathcal{F}}$, we denote by $Z_{\mathrm{gal}}$ the closed $\mathcal{F}$-subvariety of $Y$ corresponding to $Z$ and write 
	
	\begin{align}\label{height def}
		h_{\overline{L}}(Z)=\frac{\overline{L}^{\dim Z+1}\cdot Z_{\mathrm{gal}}}{(\dim Z+1)\deg_{L}(Z_{\mathrm{gal}})}\in \bR.
	\end{align}
	In the case that $Z=\{x\}$ is just a point in $Y(\overline{\mathcal{F}})$, one has 
	\begin{align}
		h_{\overline{L}}(x)=-\frac{1}{|\mathrm{Gal}(\overline{\mathcal{F}}/\mathcal{F})\cdot x|}\sum_{v\in M_{\mathcal{F}}}\sum_{z\in \mathrm{Gal}(\overline{\mathcal{F}}/\mathcal{F})x}n_v\log\|s(z)\|_v,
	\end{align}
	where $s$ is meromorphic section of $L$ that is regular and non-vanishing at $x$. 
 
	As an example, in the case of the $\phi$-invariant metric $\overline{L}_{\phi}$, the height
	$h_{\overline{L}_{\phi}}$ is the canonical height $\hat{h}_{L,\phi}$ of $\phi$  introduced by Call-Silverman \cite{Call:Silverman} taken with respect to the line bundle $L$.
   
	For $i=1,\ldots, \dim Y+1$ we have the $i$th successive minimum 
	\begin{align}\label{minima}
		e_{i}(Y,\overline{L})=\sup_Z\inf_{P\in Y(\overline{\mathcal{F}})\setminus Z} h_{\overline{L}}(P),
	\end{align}
	where $Z$ ranges over all closed subsets of codimension $i$ in $Y$.
	Zhang \cite{Zhang small} and Gubler \cite[Lemma 4.1, Proposition 4.3]{Gubler:split} in the setting of number fields and function fields respectively proved a fundamental inequality establishing a relationship between these sucessive minima and the height of $Y$. 
	\begin{lemma}[Zhang-Gubler fundamental inequality]\label{zhang ineq}
     Assume that $\overline{L}$ is a semipositive continuous adelic metrized line bundle on an irreducible projective $d$-dimensional variety $Y$ defined over $\mathcal{F}$. Assume that $L$ is ample. Then 
		$$\frac{e_{1}(Y,\overline{L})+\cdots+e_{d+1}(Y,\overline{L})}{\dim Y+1}\le h_{\overline{L}}(Y)\le e_{1}(Y,\overline{L}).$$
	\end{lemma}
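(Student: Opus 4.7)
The plan is to establish both inequalities by induction on $d = \dim Y$, following the classical strategy of Zhang in the number-field case and Gubler in the function-field case. The central analytic input is the arithmetic Hilbert--Samuel theorem, which relates, for large $N$, the dimension of the space of global sections of $\overline{L}^{\otimes N}$ with small sup-norm to the top self-intersection $\overline{L}^{d+1}\cdot Y$, and hence to $h_{\overline{L}}(Y)$. The base case $d=0$ is immediate: $Y$ is a Galois orbit of algebraic points on which $h_{\overline{L}}$ is constant, and all the $e_i$ collapse to that common value.

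For the lower bound $h_{\overline{L}}(Y) \ge \frac{e_1 + \cdots + e_{d+1}}{d+1}$, the inductive step proceeds as follows. Use arithmetic Hilbert--Samuel to construct, for any $\epsilon > 0$ and some large $N$, a nonzero global section $s$ of $L^{\otimes N}$ that is small at every place $v \in M_{\mathcal{F}}$ and whose divisor meets $Y$ properly; the latter by a generic choice from the small-norm subspace. Let $Y'$ be an irreducible $(d-1)$-dimensional component of $\di(s)\cap Y$. Applying the inductive formula for local heights and integrating against the curvature measure $c_1(\overline{L})^d$ yields an identity
\begin{align*}
    (d+1)\, h_{\overline{L}}(Y) \;=\; d\, h_{\overline{L}}(Y') \,-\, \frac{1}{N\deg_{L}(Y)}\sum_{v\in M_{\mathcal{F}}} n_v\!\int_{Y_v^{\mathrm{an}}}\log\|s\|_v \,c_1(\overline{L})^d,
\end{align*}
whose last summand is $\ge e_{d+1}(Y, \overline{L}) - \epsilon$ when $s$ is arranged so that $-\tfrac{1}{N}\log\|s\|_v \ge e_{d+1}(Y,\overline{L}) - \epsilon$ on a set of full $c_1(\overline{L})^d$-mass. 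Invoking the inductive hypothesis on $Y'$, together with the monotonicity $e_i(Y', \overline{L}|_{Y'}) \ge e_i(Y, \overline{L})$ (which follows by restricting the defining supremum), and letting $\epsilon\to 0$, completes the step.

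For the upper bound $h_{\overline{L}}(Y) \le e_1(Y, \overline{L})$, one argues by contradiction: if $h_{\overline{L}}(Y) > e_1(Y, \overline{L}) + \epsilon$, then arithmetic Hilbert--Samuel provides a subspace of $H^0(Y, L^{\otimes N})$ of sections with $\sup$-norm $\le e^{-N(h_{\overline{L}}(Y) - \epsilon/2)}$ of dimension $\gtrsim \frac{N^{d+1}}{(d+1)!}\bigl(h_{\overline{L}}(Y) - e_1(Y,\overline{L}) - \epsilon\bigr)\deg_L(Y)$. For $N$ sufficiently large, this dimension exceeds the number of linearly independent conditions imposed by any generic sequence of points in $Y$ of height $\le e_1(Y, \overline{L}) + \epsilon/2$, forcing one such section to vanish along the orbits --- which, upon evaluating the section, contradicts the lower bound on the product $\prod_v \|s(P_n)\|_v = 1$ coming from the product formula.

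The principal obstacle is the construction of the small sections with simultaneous control of sup-norms at every place: this is precisely the content of arithmetic Hilbert--Samuel, relying on Minkowski-type lattice point counts in the number-field case and on a geometric Hilbert--Samuel theorem for relative projective models in Gubler's function-field proof. An additional subtle point is arranging the pointwise lower bound $-\tfrac{1}{N}\log\|s\|_v \ge e_{d+1}(Y, \overline{L}) - \epsilon$ on a set of full $c_1(\overline{L})^d$-mass, which requires a refined distribution statement for the zero locus of $s$ on the Berkovich analytification $Y^{\mathrm{an}}$; this is handled by perturbing $s$ within the small-norm subspace and invoking the continuity of the metric $\|\cdot\|_v$.
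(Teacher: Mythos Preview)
The paper does not give its own proof of this lemma: it is quoted as a known result, with references to Zhang \cite{Zhang small} for number fields and Gubler \cite[Lemma 4.1, Proposition 4.3]{Gubler:split} for function fields, and no argument is supplied. So there is nothing to compare your sketch against in the paper itself.

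That said, your sketch has two genuine gaps that would prevent it from being completed as written.

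\textbf{Upper bound.} Your contradiction is inverted. If a small section $s$ \emph{vanishes} at the Galois orbit of $P_n$, the product formula says nothing at all --- there is no nonzero quantity to which it applies. The useful implication runs the other way: when $s(P)\neq 0$ one has $N\,h_{\overline{L}}(P)=-\sum_v n_v\log\|s(P)\|_v$, and smallness of $\|s\|_{v,\sup}$ forces $h_{\overline{L}}(P)$ to be \emph{large}. The correct flow (which is Zhang's) is: for any $c<h_{\overline{L}}(Y)$, arithmetic Hilbert--Samuel (in the form of arithmetic ampleness of $\overline{L}(-c)$) produces enough small sections that their common zero locus $Z\subsetneq Y$ is a proper closed subset; every $P\notin Z$ then satisfies $h_{\overline{L}}(P)\ge c$, hence $e_1\ge c$. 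No generic sequence of small points enters.

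\textbf{Lower bound.} The claim that one can arrange $-\tfrac1N\log\|s\|_v\ge e_{d+1}(Y,\overline{L})-\epsilon$ on a set of full $c_1(\overline{L})^d$-measure has no meaning as stated: $e_{d+1}$ is the infimum of the \emph{global} height over $Y(\overline{\mathcal F})$, not a pointwise bound on a local metric at a single place, and ``perturbing $s$ within the small-norm subspace'' does not produce such a bound. Moreover your monotonicity is off by one: for a hypersurface section $Y'\subset Y$ the correct comparison is $e_i(Y',\overline{L}|_{Y'})\ge e_{i+1}(Y,\overline{L})$ (a codimension-$i$ subset of $Y'$ has codimension $i+1$ in $Y$), not $e_i(Y')\ge e_i(Y)$. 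With your index convention the induction yields at best $e_1(Y)+\cdots+e_d(Y)$ from the $Y'$ term and you have no mechanism to recover the missing $e_{d+1}$; with the correct shift one gets $e_2+\cdots+e_{d+1}$ from induction and must extract $e_1$ from the choice of the section $s$ (vanishing along a subvariety that realises $e_1$), which is how Zhang actually proceeds.
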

	
    We finish this section with a convention. 
    Throughout this article when $\phi=(f_1,\ldots,f_{\ell}):\mathbb{P}^{\ell}_1\to \mathbb{P}^{\ell}_1$ is a split polarized endomorphism over $\mathcal{F}$ with polarization degree $d\ge 2$ we will be working with the line bundle $L_0$ associated to the divisor 
        $$\{\infty\}\times\bP_1\cdots\times\bP_1+\cdots+\bP_1\times\cdots\times\mathbb{P}_1\times\{\infty\},$$ giving a polarization. We will abbreviate the height from \eqref{height def} in this case by
        \begin{align}\label{height convention}
        \hat{h}_{\phi}(\cdot):=h_{(\overline{L_0})_{\phi}}(\cdot).
        \end{align} 
    
	%%%%%%%%%%%%%%%
	
    	\section{Equal measures and weakly special curves}\label{functional transcendence}
    Our first goal in this section is to prove Theorem \ref{transcendence}. In fact we will prove a more general result applicable to non-polarized endomorphisms. Then in \S\ref{local index} we apply Theorem \ref{transcendence} to construct an auxiliary continuous real-valued function on $B\times\bP^2_1$ which can be used to detect weakly special fibers in families of curves. 
   
         Let 
          $f\in\bC(z)$ be a rational map with $\deg f\ge 2$. Recall that Lyubich \cite{Ly} and Freire-Lopes-Mañé \cite{FLM} have constructed an invariant measure $\mu_f$ supported on the Julia set of $f$, $J_f$. 
          %It is the unique measure with no point masses and such that  $f^{*}\mu_f=(\deg f)\mu_f$. 
          
               We will show that the following holds.
        \begin{theorem}\label{transcendence2}
        	Let $f_i\in \bC(z)$ be rational maps of (not necessarily equal ) degree $\deg(f_i)\ge 2$ for $i=1,2$. 
            Let $C \subset \mathbb{P}_1^2$ be an irreducible complex algebraic curve with dominant projections $\pi_i:C\to\bP_1$ for $i=1,2$. 
            Then $\deg(\pi_2)\pi_1^*\mu_{f_1} = \deg(\pi_1)\pi_2^*\mu_{f_2}$ 
                   if and only if $\text{supp}(\pi_1^*\mu_{f_1}) \cap \text{supp}(\pi_2^*\mu_{f_2}) \neq \emptyset $ and there exist positive integers $\ell, k$ such that $\deg(f_1)^{\ell} = \deg(f_2)^{k}$ and $C$ is weakly $(f_1^{ \ell }, f_2^{k})$-special. 
        \end{theorem}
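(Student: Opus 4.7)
The ``if'' direction is essentially a direct calculation. If $C$ is $(f_1^{\ell}, f_2^{k})$-preperiodic with $D := \deg(f_1)^\ell = \deg(f_2)^k$, both measures $\deg(\pi_2)\pi_1^{*}\mu_{f_1}$ and $\deg(\pi_1)\pi_2^{*}\mu_{f_2}$ agree with the canonical probability measure of maximal entropy $\log D$ for the correspondence on $C$ induced by $(f_1^{\ell}, f_2^k)$; one crucially uses here that $\mu_{f_i} = \mu_{f_i^n}$ for every $n \ge 1$. In the exceptional weakly-special case, the common measure is the pushforward of the Haar measure on the coset $H \subset G_{f_1} \times G_{f_2}$ via $(\Psi_{f_1}, \Psi_{f_2})$, and the nonempty intersection of supports is immediate from dominance of the projections.

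For the converse, pass to the normalization $\tilde{C}$ of $C$ and set $\nu := \deg(\pi_2) \pi_1^{*} \mu_{f_1} = \deg(\pi_1) \pi_2^{*} \mu_{f_2}$, so that $\pi_{i,*}\nu = \deg(\pi_i) \mu_{f_i}$ for $i = 1,2$. The maps $f_1$ and $f_2$ each induce expanding correspondences on $\tilde{C}$ preserving $\nu$, with respective metric entropies $\log\deg(f_1)$ and $\log\deg(f_2)$; comparing these forces positive integers $\ell, k$ with $\deg(f_1)^{\ell} = \deg(f_2)^{k}$, after which I replace $f_i$ by these iterates to assume $\deg(f_1) = \deg(f_2) = d$. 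The hypothesis on supports together with equidistribution of preimages of repelling periodic points would then yield a ($\nu$-generic) seed point $\tilde{p} \in \tilde{C}$ at which $\pi_1(\tilde{p})$ and $\pi_2(\tilde{p})$ are simultaneously repelling periodic for $f_1$ and $f_2$.

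At this seed point, Koenigs linearization gives local holomorphic coordinates $\varphi_i$ near $\pi_i(\tilde{p})$ conjugating $f_i^{N}$ (for $N$ a common period) to multiplication by the respective multiplier. The measure identity $\pi_{1,*}\nu = \deg(\pi_1)\mu_{f_1}$, $\pi_{2,*}\nu = \deg(\pi_2)\mu_{f_2}$, combined with these linearizations, forces a local analytic identification between branches of $\pi_2 \circ \pi_1^{-1}$ compatible with iteration --- this is the Levin-Przytycki rigidity, adapted from the setting of common invariant measures on $\mathbb{P}^1$ to the setting of compatible pullbacks on $C$. The main obstacle, and hardest step, is to promote this local identification to a global algebraic statement. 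The plan is to analytically continue the identification along paths in $\tilde{C}$, using algebraicity of $C$ to guarantee that branches close up into a correspondence, and then invoke classical rigidity theorems for commuting and semi-conjugate rational maps (in the spirit of Ritt, Julia, and their modern extensions) to conclude that either $C$ is periodic under an iterate of $(f_1, f_2)$, or both $f_1, f_2$ are exceptional with a common associated algebraic group. In the latter case, the linearization coordinates uniformize that group and identify $C$ with the image of a coset under $(\Psi_{f_1}, \Psi_{f_2})$, giving the weakly-special conclusion.
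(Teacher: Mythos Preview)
Your proposal has a genuine gap at the step where you claim to find a seed point $\tilde{p}$ with $\pi_1(\tilde{p})$ and $\pi_2(\tilde{p})$ \emph{simultaneously} repelling periodic for $f_1$ and $f_2$. Equidistribution tells you that repelling periodic points of $f_i$ are dense in $J_{f_i}$, hence their $\pi_i$-preimages are dense in $\mathrm{supp}(\nu)$; but two dense subsets of a perfect set need not intersect. The paper flags exactly this issue: in \cite{dmm1} such a point was produced only via Mimar's theorem, under the extra hypothesis that $C$ already contains infinitely many $(f_1,f_2)$-preperiodic points, which is essentially what you are trying to prove. Your appeal to ``equidistribution of preimages of repelling periodic points'' does not bridge this gap.

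The paper's argument avoids the seed point entirely. Instead of Koenigs linearization at a common periodic point, it uses the Lyapunov-exponent lemma of Levin--Przytycki (their Lemma~1, restated here as Lemma~\ref{lemma1}) to find, for a $\mu_{f_i}$-generic point, long injective branches of $f_i^{k_n}$ mapping shrinking balls onto balls of definite size. Choosing a point $p_0$ in the intersection of the large-measure sets $\pi_1^{-1}(E_{f_1})\cap\pi_2^{-1}(E_{f_2})$ and combining these branches with a local branch $s$ of $\pi_2\circ\pi_1^{-1}$, one builds $\tilde{h}_n = f_2^{k_n^2}\circ s\circ f_1^{-k_n^1}$, shows by Koebe and Montel that this is a normal family with no constant limits, and then observes that $h_n = s^{-1}\circ\tilde{h}_n$ is a nontrivial family of symmetries of $J_{f_1}$. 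Levin's theorem (Theorem~\ref{Levin}) forces the sequence to stabilize, yielding a functional equation $f_2^{M+k}\circ s\circ f_1^{-(N+\ell)} = f_2^M\circ s\circ f_1^{-N}$; the degree relation $\deg(f_1)^\ell=\deg(f_2)^k$ falls out of the measure scaling \eqref{measuresequence} at this point, not from an a priori entropy comparison. Only after the curve is shown preperiodic does one produce a point with both coordinates preperiodic and invoke \cite[Proposition~5.1]{dmm1}.

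Two smaller points. Your entropy argument for the degree relation is unconvincing as stated: two correspondences on $\tilde{C}$ preserving the same measure $\nu$ with entropies $\log d_1$ and $\log d_2$ do not obviously force $d_1,d_2$ to be multiplicatively dependent. And for the ``if'' direction, your appeal to a ``canonical measure of maximal entropy for the correspondence'' is not standard; the paper instead argues by specialization over $\overline{\mathbb{Q}}$ (using \cite[Lemma~6.1, Proposition~4.1]{dmm1}) and then passes to $\mathbb{C}$ by continuity of the measures in the parameter.
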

    
    Our proof relies on results from complex dynamics. For basic notions and results we refer the reader to \cite{milnor}. We will use Koebe's 1/4-theorem, which we recall here for the reader's convenience.  For $a\in \bC$ and $r\in \bR$ we write 
    $$B(a,r)=\{z\in \bC~:~|z-a|<r\}.$$ 
    \begin{theorem}[Koebe 1/4-theorem]
    Suppose that $f:B(a,r)\to \bC$ is a univalent function. Then 
    $f(B(a,r))\supseteq B(f(a), \frac{r}{4}|f'(a)|)$.
    \end{theorem}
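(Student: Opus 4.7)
The plan is to prove the Koebe $1/4$-theorem in the standard way: first reduce to a normalized form, then derive the sharp coefficient bound of Bieberbach, and finally use a clever univalent transformation to convert that coefficient bound into the geometric covering statement. The normalization step is routine: given $f:B(a,r)\to\bC$ univalent, I would consider
\[
g(z) \;=\; \frac{f(a+rz)-f(a)}{r\,f'(a)}, \qquad z\in \bD,
\]
where $\bD=B(0,1)$. Then $g$ is univalent on $\bD$ with $g(0)=0$ and $g'(0)=1$, and the conclusion $f(B(a,r))\supseteq B(f(a),\tfrac{r}{4}|f'(a)|)$ is equivalent to $g(\bD)\supseteq B(0,1/4)$. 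So it suffices to prove the statement for univalent $g:\bD\to\bC$ with $g(0)=0$, $g'(0)=1$, writing $g(z)=z+a_2 z^2 + a_3 z^3 + \cdots$.

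The first substantive step is the area theorem of Gronwall, giving coefficient bounds for functions of the class $\Sigma$ (univalent on $\{|z|>1\}$ with expansion $h(z)=z+b_0+b_1/z+\cdots$): namely $\sum_{n\ge 1} n|b_n|^2 \le 1$, which I would derive by computing the area of the complement of $h(\{|z|>r\})$ via Green's theorem and letting $r\to 1^+$. From this I would deduce Bieberbach's inequality $|a_2|\le 2$ for our normalized $g$ by applying the area theorem to the odd square-root transform $\sqrt{g(z^2)}/z$ (extended to infinity and inverted) — this is the classical two-step trick that turns the area inequality into the sharp quadratic coefficient bound.

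The final step is the covering argument. Suppose $w\in\bC\setminus g(\bD)$, with $w\neq 0$. Since $g$ misses $w$, the function
\[
h(z) \;=\; \frac{w\, g(z)}{w - g(z)}
\]
is holomorphic and univalent on $\bD$, and a direct expansion gives $h(0)=0$, $h'(0)=1$, and second Taylor coefficient equal to $a_2 + 1/w$. Applying Bieberbach both to $g$ and to $h$ yields $|a_2|\le 2$ and $|a_2 + 1/w|\le 2$, hence by the triangle inequality $|1/w|\le 4$, that is $|w|\ge 1/4$. Thus every omitted value of $g$ lies outside $B(0,1/4)$, proving $g(\bD)\supseteq B(0,1/4)$ and hence the theorem.

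The main obstacle is really the derivation of $|a_2|\le 2$: the area theorem itself is a short computation, but combining it with the odd square-root transform requires verifying that $\sqrt{g(z^2)}$ admits a single-valued univalent branch on $\bD$ (which uses $g(0)=0$, $g'(0)=1$, and univalence of $g$ in an essential way). Once Bieberbach is in hand, the $1/4$ covering is a one-line consequence of the transformation trick above.
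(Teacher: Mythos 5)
Your proof is correct — it is the standard textbook derivation of the Koebe $1/4$-theorem: normalize to $g\in S$ on $\bD$, establish Gronwall's area theorem for the class $\Sigma$, deduce Bieberbach's bound $|a_2|\le 2$ via the odd square-root transform, and then apply Bieberbach to both $g$ and the Koebe composite $h(z)=wg(z)/(w-g(z))$ to get $|w|\ge 1/4$ for every omitted value $w$. The one point to be careful about, which you flag, is that $\sqrt{g(z^2)}$ admits a single-valued odd univalent branch on $\bD$; this holds because $g(z^2)/z^2$ is holomorphic and nonvanishing on $\bD$ (so has a holomorphic square root), and univalence of the resulting odd function follows from univalence of $g$ together with the relation $\phi(-z)=-\phi(z)$.

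For comparison with the paper: there is no proof to compare against. The paper merely recalls the Koebe $1/4$-theorem as a classical ingredient (it is applied in the proof of Theorem \ref{transcendence2}) and gives no argument for it. So your proposal supplies a proof where the paper supplies none, and the proof you give is the canonical one.
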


    Let $J_f$ be the Julia set of a rational function $f\in\bC(w)$ with $\deg f\ge 2$.  
    A holomorphic function $h:U \rightarrow \mathbb{C}$ defined on a neighborhood $U$ of $z_0\in J_f$ is called a \emph{symmetry} of $J_f$ if 
    \begin{itemize}
    \item 
   $z\in U\cap J_f\Leftrightarrow h(z)\in h(U)\cap J_f$; and 
    \item 
    if $J_f$ is a circle, the entire Riemann sphere or a line segment, there is a constant $\al>0$ such that for any Borel set $A$ on which $h|_{A}$ is injective we have $\mu_f(h(A))=\al\mu_f(A)$.
    \end{itemize}
    A family $\mathfrak{S}$ of symmetries of $J_f$ is called \emph{non-trivial} if $\mathfrak{S}$ is normal  \textbf{and} no limit function of $\mathfrak{S}$ is constant. The following result is classical and will be central in our proof. 
    It combines Levin's results \cite{Levin} as well as the work of Douady and Hubbard  \cite{DH}, which relied on ideas by Thurston.
    
      		\begin{theorem}[\cite{DH},\cite{Levin}]\label{Levin}
              If the Julia set of a rational map $f$ has an infinite non-trivial family of symmetries, then $f$ is exceptional.
      		\end{theorem}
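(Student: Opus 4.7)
The plan is to exploit the normality assumption to upgrade the infinite discrete family $\mathfrak{S}$ to a continuous (positive-dimensional) family of local symmetries, extract from it a nontrivial holomorphic vector field whose local flow preserves $J_f$, and then use the classification of Julia sets admitting such an infinitesimal symmetry to conclude that $f$ must be conjugate to a power map, a Chebyshev map, or a Latt\`es map.

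First I would extract a non-constant limit: by normality some subsequence $h_n \in \mathfrak{S}$ converges locally uniformly to a holomorphic $h_\infty$, and by hypothesis $h_\infty$ is non-constant. Passing to a small neighborhood $V$ of a point $z_0 \in J_f$ where $h_\infty$ is univalent and has non-vanishing derivative, the compositions $h_n \circ h_\infty^{-1}$ form a sequence of symmetries tending to the identity. Taking formal derivatives (or applying the Cauchy formula to $(h_n \circ h_\infty^{-1} - \mathrm{id})/\varepsilon_n$ for suitably chosen small $\varepsilon_n \to 0$) produces a non-zero holomorphic vector field $\xi$ on $V$ whose (local) time-$t$ flow $\phi_t$ lies in the closure of compositions of elements of $\mathfrak{S}$ and their local inverses, and hence still maps $V \cap J_f$ to (an open piece of) $J_f$.

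Next I would analyze the constraint imposed by $\xi$. Invariance of $J_f \cap V$ under a nontrivial holomorphic flow forces $J_f \cap V$ to be a union of orbit arcs of $\phi_t$. Since $J_f$ is perfect, this leaves only two geometric possibilities: either $J_f$ lies locally on a real-analytic curve, or $J_f \cap V$ has nonempty interior, which by the standard dichotomy for Julia sets forces $J_f = \mathbb{P}^1$. In the first case, Fatou's and Eremenko's classification of rational maps with a smooth invariant real-analytic Julia set identifies the curve as (the image under a M\"obius transformation of) a round circle or a line segment, and rigidity on $S^1$ resp.\ $[-2,2]$ shows $f$ is conjugate to $z^{\pm d}$ or $\pm T_d$ --- both exceptional with $G_f = \mathbb{G}_m$. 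In the second case, one uses the invariant measure condition in the definition of symmetry, which says $\phi_t^{*}\mu_f$ is proportional to $\mu_f$; combined with the equidistribution/ergodicity of $\mu_f$ this forces $\mu_f$ to be (absolutely continuous with respect to) a smooth volume form, so $f$ is a Latt\`es map by the Zdunik/Mayer rigidity theorem, and the uniformization furnishes the elliptic curve $G_f$ and the semiconjugacy from an isogeny $\Psi_f$.

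The main obstacle is the globally chaotic case $J_f = \mathbb{P}^1$: passing from a purely local, measure-theoretic infinitesimal symmetry of $\mu_f$ to the algebraic datum of a torus covering $\mathbb{P}^1$ is the deep content, and it is precisely here that the Thurston-style rigidity results of Douady--Hubbard, in tandem with Levin's analysis of the push-forward equation $f_{*}\xi = \lambda \xi$ for the invariant vector field, are used to identify $f$ as a Latt\`es example. The other cases are comparatively soft and amount to recognizing classical invariant geometric structures on $J_f$.
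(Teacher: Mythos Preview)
First, note that the paper does not itself prove this statement; it is quoted from \cite{Levin} and \cite{DH} and used as a black box. Your outline is in the right spirit, but there is a genuine gap in the central step. After constructing a non-zero holomorphic vector field $\xi$ whose real-time flow $\phi_t$ preserves $J_f\cap V$, you assert the dichotomy ``either $J_f$ lies locally on a real-analytic curve, or $J_f\cap V$ has nonempty interior.'' This does not follow. Invariance under a real one-parameter flow only says that, in coordinates where $\xi=\partial_z$, the set $J_f\cap V$ has the local form $E\times\mathbb{R}$ for some closed $E\subset\mathbb{R}$; perfectness of $J_f$ merely forces $E$ to be perfect, and $E$ could perfectly well be a Cantor set, yielding a set that is neither a single analytic arc nor open. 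Your approximation $\phi_t\approx g_n^{[t/\varepsilon_n]}$ only produces the \emph{real}-time flow (the exponents $k_n$ are positive integers and $\varepsilon_n>0$); obtaining the complex-time flow, which would indeed force $J_f$ to be open, requires two $\mathbb{R}$-independent infinitesimal directions among the subsequential limits of $(g_n-\mathrm{id})/\varepsilon_n$, and nothing in the hypotheses guarantees this. Levin's actual argument supplies the missing dynamical input: one transports the local symmetries to a neighborhood of a repelling periodic point and combines them with the scaling self-similarity of $J_f$ in linearizing coordinates, which is what rules out the Cantor-fiber picture.

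A secondary issue: in the circle/segment case you write that ``rigidity on $S^1$ shows $f$ is conjugate to $z^{\pm d}$,'' but $J_f=S^1$ alone does not imply this (any expanding finite Blaschke product has $J_f=S^1$). Here one must invoke the measure clause in the paper's definition of symmetry to conclude that $\mu_f$ is absolutely continuous on $S^1$, and only then appeal to the Zdunik-type rigidity you correctly cite in the $J_f=\mathbb{P}^1$ case.
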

      	 
    Next we record here the following lemma, which follows from  \cite[Lemma 1]{LP}.  Note that \cite[Lemma 1]{LP} is essential in Levin-Przytycki's proof of Theorem \ref{transcendence2} for the case of the diagonal curve  \cite{LP}.

\begin{lemma}\label{LP:key}
Let $f$ be an ordinary rational function of degree $\ge 2$ defined over $\bC$. 
    Then there exists a set $E_f\subset J_f$ with $\mu_f(E_f)>\frac{1}{2}$ and numbers $r_{f}>0, T_f>0$ and $\kappa_f>1$, 
    so that the following holds. For each $x\in E_f$ there exists a set $R_f(x)\subset \mathbb{R}$, such that for every $T>T_f$, the set $R_{f,T}:=R_f(x)\cap [0,T]$ has Lebesgue measure at least $5/8T$ and for every $\tau\in R_f(x)$ there exists $n\in \mathbb{N}$ such that the following hold: 
\begin{enumerate}
\item     $f^{n}: B(x,e^{-\tau})\to \bP_1$ is injective; and we have 
\item $ B(f^{n}(x), r_f/\kappa_f)\subset f^{n}(B(x, e^{-\tau}))\subset B(f^{n}(x), r_f).$
\item $n\to\infty$ as $\tau\to\infty$. 
\end{enumerate}
\end{lemma}

\begin{proof}
  The measure $\mu_{f}$ is ergodic for $f$ and its Lyapunov exponent $\chi\left(\mu_{f}\right)$ is positive - it even satisfies $\chi\left(\mu_{f}\right) \geq \frac{1}{2} \log d$ (see \cite{Ly,FLM,Ru} or \cite{BD}). Therefore, we can apply \cite[Lemma 1]{LP} and the lemma follows.
    \end{proof}

    We are now ready to prove Theorem \ref{transcendence2}. 
           
     \noindent\emph{Proof of Theorem \ref{transcendence2}:}
     Let $f_i$, $C$ and $\pi_i$ be as in the statement of the theorem. We will first prove the direct implication. 
     We have
          \begin{align}\label{equal measures}
              \mu_{C} := \frac{\pi_1^*\mu_{f_1}}{\deg(\pi_1)} = \frac{\pi_2^*\mu_{f_2}}{\deg(\pi_2)}
              \end{align}
              and in particular
              \begin{align}\label{Julia}
              \pi_1^{-1}(J_{f_1})=\pi_2^{-1}(J_{f_2}).
              \end{align}
              The equation \eqref{Julia} follows from the fact that the Julia set of $f$ is the support of $\mu_f$. A consequence of (\ref{equal measures}) and the property of the pull-back measure mentioned just after Theorem \ref{transcendence} is  that for any holomorphic branch $s:U \rightarrow \mathbb{P}_1(\mathbb{C})$ of $\pi_2\circ\pi_1^{-1}$  on an open Borel set $U$, we have the following equality 
              \begin{align}\label{analyticsection}
                  \mu_{f_2}(s(B)) = \frac{\deg(\pi_2)}{\deg(\pi_1)}\mu_{f_1}(B).
              \end{align}
              
              As explained in the proof of \cite[Theorem 2.2]{Mimar}, equation \eqref{Julia} implies that $J_{f_1}$ is a smooth set (a circle, a line segment or the entire sphere) if and only if $J_{f_2}$ is.
          Moreover by \cite[Section 9]{DH} and \cite{Zd} a rational function $f$ with degree $\ge 2$ is exceptional if and only if its Julia set is smooth and $f$ has a smooth measure of maximal entropy. 
          Therefore, by \eqref{equal measures} and \eqref{Julia} we infer that $f_1$ is ordinary if and only if $f_2$ is. 
                    
        Assume first that both $f_i$ are ordinary and let $E_i:=E_{f_i}\subset J_{f_i}$, $r_i:=r_{f_i}>0$ and $\kappa_i:=\kappa_{f_i}>1$ be the quantities associated to $f_i$ defined in Lemma \ref{LP:key} for $i=1,2$. 
        Since $\mu_C(\pi_i^{- 1}(E_i)) = \mu_{f_i}(E_i)>\frac{1}{2}$ for $i=1,2$ we have 
        		\begin{align}\label{positive measure}
                \mu_C(\pi_1^{-1}(E_1)\cap \pi_2^{-1}(E_2)) > 0.
                \end{align}
                	Let $\mathcal{R}\subset C(\mathbb{C})$ be the (finite) set of ramification points of $\pi_1$ and $\pi_2$. 
                    	%	$$ \mathcal{P} = \{ p \in C(\mathbb{C})~:~ \pi_1'(p) \pi_2'(p) = 0\}.$$
                    As $\mu_C$ does not charge points equation \eqref{positive measure} ensures that there is $p_0 \in \pi_1^{-1}(E_1)\cap \pi_2^{-1}(E_2)\setminus \mathcal{R}$.

Applying Lemma \ref{LP:key} to $\pi_i(p_0)\in E_i\setminus \pi_i(\mathcal{R})$, since the set $R_{f_1}(\pi_1(p_0))\cap (R_{f_2}(\pi_2(p_0))+1)\cap [0,T]$ has Lebesgue measure at least $1/9T$ for $T$ large enough, we can find strictly increasing sequences 
                    $t^i_n\to \infty, t_n^2 = t_n^1-1$ and $k^i_n\to \infty$ as $n\to\infty$ so that for each $n\in\bN$,  
 the map 
                    \begin{align}\label{inj}
                    f_i^{k^i_n}&: B(\pi_i(p_0), e^{-t^i_n})\to \bP_1
                    \end{align}
                    is injective, $B(\pi_i(p_0), e^{-t_n^i})\cap \pi_i(\mathcal{R})=\emptyset$ and 
                    \begin{align}\label{lp1}
                    B(f^{k^i_n}_i(\pi_i(p_0))  , r_i/\kappa)\subset f^{k^i_n}_i(B(\pi_i(p_0), e^{-t^i_n}))\subset B(f^{k^i_n}_i(\pi_i(p_0)), r_i),
                    \end{align}
                    where $\kappa:=\max\{\kappa_1,\kappa_2\}>1$.

We set $s$ to be a branch of $\pi_2\circ \pi_1^{-1}$ near $\pi_1(p_0)$ and we may assume without loss of generality that 
                     \begin{align}\label{wlog}
                     |s'(\pi_1(p_0))| \leq 1.
                     \end{align}
                       Indeed we either have $|(\pi_2\circ \pi_1^{- 1})'(\pi_1(p_0))| \leq  1$ or $|(\pi_1\circ \pi_2^{- 1})'(\pi_2(p_0))| \leq  1$ and if the latter happens we may just reverse the roles of $f_1,f_2$ in what follows. 
                      By \eqref{wlog} we may also assume that
                        \begin{align}\label{wlog2}
                        \sup_{n\in\bN}\sup\{|s'(z)| ~:~z \in B(\pi_1(p_0), e^{-t^1_n})\}\le 2,
                        \end{align}
                    
                    As the Julia set $J_{f_i}$ is compact (in the topology induced by the chordal distance on $\bP_1(\bC)$) passing to a subsequence we may also assume that 
                    \begin{align}\label{bounded}
                   f^{k^i_n}_i(\pi_i(p_0))\to p_i,
                    \end{align}
                     for some $p_i\in J_{f_i}$ as $n\to\infty$. 
                    Passing to a further subsequence  we can thus infer that
                                          \begin{align}\label{balls} 
                                               	B(f_1^{k^1_n}(\pi_1(p_0)), c_2r_1) \subset B(p_1,c_1r_1) \subset  f_1^{k^1_n}(B(\pi_1(p_0), e^{-t^1_n})),
                                               	\end{align} 
                                               for positive constants $c_1,c_2>0$  and all $n\in\bN$. 
                      From \eqref{balls}, \eqref{wlog2} and $t_n^2 = t_n^1 -1$ we infer that 
                                                    \begin{align}
                                                    s\circ f_1^{-k^1_n} (B(p_1,c_1r_1)) \subset  B(\pi_2(p_0), e^{-t^2_n}),
                                                    \end{align}
                                                    Here we have made a choice of inverse branch of $f_1^{k^1_n}$.
                                                    Thus by \eqref{lp1} and \eqref{bounded} after perhaps passing to another subsequence, it also holds that
                                                   \begin{align}\label{is contained in ball}
                                                    f_2^{k^2_n} \circ s\circ f_1^{-k^1_n} B(p_1,c_1r_1)\subset B(p_2, 2r_2).
                                                   \end{align}
         By Montel's theorem we thus infer that the functions
              $$\tilde{h}_n := f_2^{k^2_n} \circ s\circ f_1^{-k^1_n}: B(p_1,c_1r_1)\to \bP_1(\bC)$$
              form a normal family. 
              
              Next, we will show that $\{\tilde{h}_n\}_n$ has no subsequence that converges to a constant function.                                           
        Applying the Cauchy integral formula to $f_1^{k^1_n}$ at $\pi_1(p_0)$ and using \eqref{lp1} and \eqref{bounded} we deduce 
            \begin{align}\label{lowerd}
            |(f_1^{-k^1_n})'(f_1^{k^1_n}(\pi_1(p_0)))|\geq c_3r^{-1}_1e^{-t^1_n},
            \end{align} 
             for an constant $c_3> 0$ and all $n\in\bN$.   
           From (\ref{balls}), inequality (\ref{lowerd}) and Koebe's 1/4-theorem we deduce
         \begin{align}\label{koebe1}
         B(\pi_1(p_0), c_4e^{-t^1_n}) \subset f^{-k^1_n}(B(p_1, c_1r_1))
         \end{align}
         for some $c_4>0$.
         By Koebe's 1/4-theorem and the fact that $s'(\pi_1(p_0)) \neq 0$ we also have that 
         \begin{align}\label{koebe2}
         B(\pi_2(p_0), c_5e^{-t^2_n})\subset s(B(\pi_1(p_0), c_4e^{-t^1_n}))
         \end{align}
         for some $c_5>0$ and all $n\in\bN$. 
         Similarly, applying the Cauchy integral formula to $f^{-k^2_n}_2$ at $f^{k^2_n}_2(\pi_2(p_0))$ and using \eqref{lp1} and \eqref{bounded} we get 
         \begin{align}\label{lowerboundkoebe}
           |(f_2^{k^2_n})'(\pi_2(p_0))|\geq c_6r_2e^{t^2_n},
           \end{align}
           for $c_6>0$ and all $n\in\bN$. 
         In view of \eqref{lowerboundkoebe}, Koebe's 1/4-theorem combined with \eqref{koebe1} and \eqref{koebe2} implies that
         \begin{align}\label{contains ball}
          B(f_2^{k^2_n}(\pi_2(p_0)), c_7r_2) \subset f_2^{k^2_n} \circ s\circ f_1^{-k^1_n}(B(p_1,c_1r_1))
          \end{align}
         for a positive constant $c_7$ (we used that $t_n^2 =t_n^1 -1$). This proves our claim that $\{\tilde{h}_n\}_{n\in\bN}$ has no subsequence which converges to a constant function.
         
         Our next task is to construct a non-trivial family of symmetries of $J_{f_1}$. 
         To this end let $h^*$ be a limit function of the family $\{\tilde{h}_n\}_{n\in\bN}$. 
         We can choose a ball $B^{*}:=B(p^*, r^*) \subset B(p_1,c_1r_1)$ centered at $p^{*}\in J_{f_1}$ such that $h^*(B(p^*, 2r^*)) \cap \pi_2(\mathcal{R}) = \emptyset$ (indeed recalling that $\pi_1(p_0)\in J_{f_1}$ and that \eqref{bounded} holds we can choose $p^{*}= f^{k^1_n}_1(\pi_1(p_0))$ for some large $n$.). 
         As restrictions of sequences of functions converge to the restriction of  the limit function, passing to a subsequence of $\tilde{h}_n$ we may assume that $\tilde{h}_n(B^{*}) \cap \pi_2(\mathcal{R}) = \emptyset$ for all $n\in\bN$ and $\tilde{h}_n \rightarrow h^*$ (which is non-constant). Now consider the functions defined by 
         \begin{align}
         h_n:= s^{-1}\circ \tilde{h}_n: B^{*}\to \bP_1(\bC).
         \end{align}
         As we have seen, they form a normal family with no subsequence that converges to a constant function. Moreover if $A \subset B^{*}$ is a Borel measurable set (on which $h_n$ is injective), then because of our assumption \eqref{equal measures} and in particular (\ref{analyticsection}) we have
         \begin{align}\label{measuresequence} 
         	\mu_{f_1}(h_n(A)) = \frac{\deg(f_2)^{k^2_n}}{\deg(f_1)^{k^1_n}}\mu_{f_1}(A).
         \end{align}
         Thus we have a non-trivial family of symmetries of $J_{f_1}$. 
         As $f_1$ is ordinary,  by Theorem \ref{Levin} we infer that $\{h_n\}$ is eventually a constant sequence. 
         Thus there exist positive integers $k,\ell$ and non-negative integers $N,M$ such that 
          \begin{align}\label{equation}
           f_2^{M + k}\circ s \circ f_1^{-(N + \ell)} = f_2^{M }\circ s \circ f_1^{-N }.
           \end{align}
        From \eqref{measuresequence} it follows directly that $\deg(f_2)^{k}=\deg(f_1)^{\ell}$. 
        We are now going to show that 
        \begin{align}\label{prep curve}
         (f_1^{N + \ell},f_2^{M+ k})(C)= (f_1^{N},f_2^{M})(C).
        \end{align}
         Assuming \eqref{prep curve} holds, as  in \cite[Lemma 6.1]{dmm1} it follows that if $\pi_1(p)$ is preperiodic by $f_1$ for a point $p \in C(\mathbb{C})$ then $\pi_2(p)$ is preperiodic by $f_2$. 
         Thus we can choose $p$ such that $\pi_1(p)$ is a repelling periodic point of $f_1^{ \ell}$ that is neither a ramification point of $\pi_1$ nor of any iterate of $f_1$. Then $\pi_2(p)$ is also preperiodic by $f_2$. Taking $L\in\bN$ such that $f_2^{ Lk}(\pi_2(p_2))$ is fixed by $f_2^{ Lk}$ and $\pi_1(p)$ is fixed by $f_1^{ L\ell}$, we see that $(f_1^{ L\ell},f_2^{ Lk})(C)$ satisfies the conditions of  \cite[Theorem 5.1]{dmm1} and the conclusion of our theorem follows. 
         
         To prove \eqref{prep curve} consider the image of an arc
         $$\mathcal{I} := \{ (f_1^{-(N+ \ell)}(x), s\circ f_1^{-(N+\ell)}(x))~:~x\in B^{*}\}$$
         which is well-defined by our construction. 
         Since the inverse $f^{-N}$ is well-defined it holds that 
         \begin{align}
         \begin{split}
          (f_1^{N + \ell},f_2^{ M + k})(\mathcal{I}) &= \{(x, f_2^{M}\circ s\circ f_1^{-N}(x))~:~x \in B^{*}\}\\
          &= \{(f_1^{N}\circ f_1^{-N}(x), f_2^{M}\circ s\circ f_1^{-N}(x))~:~x \in B^{*}\}\\
          &=\{(f_1^{N}(y), f_2^{M}\circ s(y))~:~ y \in f_1^{-N}(B^{*})\}.
          \end{split}
          \end{align}
       We infer that $(f_1^{N + \ell},f_2^{ M + k})(\mathcal{I}) \subset  (f_1^{N},f_2^{ M})(C)$. 
       It is also evident that $(f_1^{N + \ell},f_2^{M + k})(\mathcal{I}) \subset (f_1^{N + \ell},f_2^{ M + k})(C)$ since $\mathcal{I}\subset\mathcal{C}$. Thus, 
        $$(f_1^{N + \ell},f_2^{ M + k})(\mathcal{I})  \subset  (f_1^{N},f_2^{ M})(C)\cap(f_1^{N + \ell},f_2^{ M + k})(C) $$ 
        and by Bézout's theorem and the fact that $C$ is irreducible our claim \eqref{prep curve} follows. 
        This finishes the proof in the case that both $f_1,f_2$ are ordinary. \\
        
        If now both $f_1$ and $f_2$ are exceptional then the statement becomes more classical and should be known. We have not found a proof in the literature and thus provide a quick sketch here. 
        First note that if one of the groups, say $G_{f_1}$  is an elliptic curve then the other has to be as well as the support of $\mu_{f_1}$ is $\mathbb{P}_1(\mathbb{C})$ and if $G_{f_2} = \mathbb{G}_m$ then the support is a topological circle or an interval. 
        If both groups $G_{f_1}, G_{f_2}$ are elliptic curves then we can employ the arguments in \cite{DM2}. In place of Bertrand's theorem \cite[Th\'eor\`em 5]{Bertrand} we can use the main theorem in \cite{Ax}. 
        Assume now that  $G_{f_1} = G_{f_2} = \mathbb{G}_m$. Then $\Psi_{f_i}^*\mu_{f_{i}}$ is a rational multiple of the Haar measure on the circle $S^1$, denoted by $\mu_{S^1}$ for $i = 1,2$. 
        Let $H$ be an irreducible component of $(\Psi_{f_1}, \Psi_{f_2})^{-1}(C)$ and let $\tilde{\pi}_1, \tilde{\pi_2}$ be the two canonical projections from $H \subset \mathbb{G}_m^2$ to $\mathbb{G}_m$. The first assumption of the theorem  then implies that $\tilde{\pi}_1^*\mu_{S^1} = \alpha \tilde{\pi}_2^*\mu_{S^1}$  for $\alpha \in \mathbb{Q}^*$. It follows that there exists a non-constant analytic curve $\gamma:(0,1) \rightarrow H$ such that $\mathrm{Im}(\tilde{\pi}_i\circ \gamma) \subset S^1$ for $i =1,2$ and such that $\tilde{\pi}_1, \tilde{\pi}_2$ are injective on the support of $\gamma$ 
        and that 
        $$\int_{1/2}^{t}dg_1 = \alpha\int_{1/2}^{t}dg_2$$
       where $g_i = \log \tilde{\pi}_i \circ \gamma, i =1,2$ for all $t $ in a neighborhood of $1/2$.  We infer that  $\log \tilde{\pi}_i \circ \gamma(t)$ for $i = 1,2 $ are linearly related over $\mathbb{Q}$ modulo a constant function. By Ax's theorem \cite{Ax} we infer that $H$ is a coset; a weaker Ax--Lindemann version would suffice.\\
  
        We will now prove the converse implication. 
        Assume that the curve $C$, which projects dominantly into both coordinates, is weakly $(f_1,f_2)$-special. We will show that \eqref{equal measures} holds. 
         First assume that both maps $f_1$ and $f_2$ are ordinary so that $\deg(f^k_1)=\deg(f^{\ell}_2)$ and $C$ is $(f^k_1,f^{\ell}_2)$-preperiodic for $k,\ell\ge 1$.  
        If both maps $f_i$ and $C$ are defined over $\Qbar$, then the result is known to  hold. 
        Indeed by \cite[Lemma 6.1]{dmm1} the curve $C$ contains infinitely many $(f^k_1,f^{\ell}_2)$-preperiodic points. Then \eqref{equal measures} follows by \cite[Proposition 4.1]{dmm1}. 
        Otherwise, assume that $f_1,f_2$ and $C$ are defined over the function field $K=\Qbar(V)$ of a variety $V$ defined over $\Qbar$ with positive dimension. We can spread the $f_i$ and $C$ out to get endomorphisms 
        $f_{i,U}:U\times\bP_1\to U\times\bP_1$ and a family of curves $\mathcal{C}\subset U\times\bP^2_1$ on a Zariski open and dense subset $U$ of $V$ with the property that fiberwise $f_{i,t}$ has degree $\deg f_i$ and $\mathcal{C}_t$ is irreducible for $t\in U$ and $i=1,2$. 
        Then for each $t\in U(\Qbar)$, the maps $f_{i,t}$ and the curve $\mathcal{C}_t$ are defined over $\Qbar$ and by \cite[Lemma 6.1]{dmm1} and \cite[Proposition 4.1]{dmm1} we infer that 
        \begin{align}\label{equalmeasures2}
        \frac{\pi_{1,t}^*\mu_{f_{1,t}}}{\deg(\pi_{1,t})} = \frac{\pi_{2,t}^*\mu_{f_{2,t}}}{\deg(\pi_{2,t})}.
        \end{align}
        Since $U(\Qbar)$ is dense in $V(\bC)$ in the euclidean topology and the measures above vary continuously, we infer that \eqref{equalmeasures2} holds for all $t\in V(\bC)$. 
        In particular, \eqref{equalmeasures2} holds for the point $t$ giving the embedding of $\Qbar(V)$ into $\bC$ and the conclusion follows.

        Assume now that $f_1$ and $f_2$ are exceptional. 
        The existence of a one dimensional coset dominating both factors implies that $G_{f_1}, G_{f_2}$ are either both elliptic curves or both equal to $\mathbb{G}_m$. 
        If they are both elliptic curves, then it is clear from the definition of the Betti coordinates that they are linearly related and equation \eqref{equal measures} follows once we express the measures in terms of the Betti coordinates as in \cite[\S 4 Addendum]{CDMZ}. 
        If $G_{f_1} = G_{f_2} = \mathbb{G}_m$ then by our assumption we have a coset $H$ in $\mathbb{G}^2_m$ with $C=(\Psi_{f_1},\Psi_{f_2})(H)$. Denoting its two projections by $\tilde{\pi}_i: H\to \mathbb{G}_m$, for $i = 1,2$, it suffices now to show that $\tilde{\pi}_i^*\mu_{S^1}$ are linearly related for $i=1,2$. 
        Our assumption that $\text{supp}(\pi_1^*\mu_{f_1}) \cap \text{supp}( \pi_2^*\mu_{f_2}) \neq \emptyset$ ensures that on $H$ it holds that $\tilde{\pi}_1(z)^{k_1}\tilde{\pi_2}(z)^{k_2} = \theta$ for two non-zero integers $k_1,k_2$ and $\theta \in S^1$. In particular, $k_1\log^{+}|\tilde{\pi}_1|= k_2\log^{+}|\tilde{\pi}_2|$ and \eqref{equal measures} follows. This completes the proof of our theorem.  
         \qed
    
    %%%%%%%%%%%%%%%%%%
    \subsection{Applications of Theorem \ref{transcendence}}\label{local index}
    
    We proceed to discuss the consequence of Theorem \ref{transcendence} which will be crucial in our proof of Theorem \ref{relative bogomolov} (hence also Theorem \ref{umb}.) 
    
    For a curve $\bc\subset \bP_1\times\bP_1$ we let $\pi_i:\bc \to \bP_1$ for $i=1,2$ denote its projections into the first and second coordinate. For each $i$ and $t\in B_0(\bC)$ we write $\pi_{i,t}: \mathcal{C}_{t}\to \bP_1$ for the corresponding specialization in the fiber $\mathcal{C}_t$. 
    We denote by $|\cdot|$ the standard absolute value in $\bC$ and for $x,y\in\bC$ we write 
    $$\|(x,y)\|=\max\{|x|,|y|\}.$$
    
    \begin{definition}\label{finite set}
    Let $\bbphi=(\bbf_1,\bbf_2):\bP_1^2\to \bP_1^2$ be a split  endomorphism and let $\bc\subset \bP_1^2$ be an irreducible curve, both defined over $K$. We denote by $E_{\bbphi,\bc}\subset B_0(\Qbar)$ the smallest finite set such that for all $t\in B_0(\bC)\setminus E_{\bbphi,\bc}$ the following hold:
    \begin{enumerate}
    \item
    $\mathcal{C}_t$ is an irreducible  curve of genus equal to the genus of $\bc$; 
    \item 
    $\deg(\pi_{i,t})=\deg(\pi_{i})$, for $i=1,2$;
    \item
    either $\bbf_i$ is exceptional or the map $f_{i,t}$ is ordinary, for $i=1,2$;
    \item 
    The homogeneous lifts $F_{i,t}$ of $f_{i,t}(X)=\frac{a_d(t)X^d+\cdots+a_0(t)}{b_d(t)X^d+\cdots+b_0(t)},$ given by
    $$F_{i,t}(x,y)=(a_d(t)x^d+\cdots+a_0(t)y^d,b_d(t)x^d+\cdots+b_0(t)y^d),$$
    define a function that is smooth in $t$, for $i=1,2$.
    \end{enumerate}
    \end{definition}
   
  %There are several ways to prove Lemma \ref{finite set} and we sketch one. For $(1)$ one can choose for example a planar smooth model for $\bc$ and use the genus degree formula to see that the fibres $\mathcal{C}_t$ where the genus drops or $\mathcal{C}_t$ is reducible is finite and consists of $\overline{\Q}$-points. For $(3)$ we note that an equation for $\mathbf{f}_i$ induces a map $B \rightarrow \mathcal{M}_d$ into the moduli space of rational maps of degree $d = \deg(\mathbf{f}_i)$. The exceptional maps correspond to subvarieties of $\mathcal{M}_d$ (defined over $\overline{\mathbb{Q}}$) and thus the claim follows. 
  %The conditions $(2)$ and $(4)$ are clearly satisfied outside a finite set. 

    The following consequence of Theorem \ref{transcendence} will be crucial in our proof of Theorem \ref{relative bogomolov}.
    
    \begin{proposition}\label{test function}
    Let $\bbphi=(\bbf_1,\bbf_2):\bP_1^2\to \bP_1^2$ be a  split polarized endomorphism of polarization degree $d\ge 2$ and let $\bc\subset \bP_1^2$ be an irreducible curve that is not weakly $\bbphi$-special, both defined over $K$. 
    %Assume that both projections $\pi_i:\bc\to\bP_1$ are dominant for $i=1,2$. 
    Then for each compact set $\mathcal{D}\subset B_0\setminus E_{\bbphi, \mathbf{C}}$, there is a continuous function 
    $\psi_{\mathcal{D}}: \pi|^{-1}_{\mathcal{C}}(\mathcal{D})\to \bR$ with the property that
    \begin{align}\label{negativepair}
    \displaystyle\int_{\mathcal{C}_t}\psi_{\mathcal{D}}|_{\mathcal{C}_t}((\deg\pi_2)\pi^{*}_{1,t}\mu_{f_{1,t}}-(\deg\pi_1)\pi^{*}_{2,t}\mu_{f_{2,t}})<0,
    \end{align}
    for each $t\in \mathcal{D}$ such that $\mathcal{C}_t$ is not weakly $\Phi_t$-special. 
    \end{proposition}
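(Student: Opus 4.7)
The plan is to produce $\psi_{\mathcal{D}}$ by a partition-of-unity gluing of locally defined test functions, each obtained by applying Theorem~\ref{transcendence} at a single fibre. For $t\in\mathcal{D}$, set
\[
\nu_t := (\deg\pi_2)\,\pi^{*}_{1,t}\mu_{f_{1,t}} - (\deg\pi_1)\,\pi^{*}_{2,t}\mu_{f_{2,t}},
\]
a signed measure of total mass zero on $\mathcal{C}_t$. Since $\mathcal{D}\subset B_0\setminus E_{\bbphi,\bc}$, Lemma~\ref{finite set} guarantees that the homogeneous lifts $F_{i,t}$ depend holomorphically on $t$; combined with the classical continuity of the measures of maximal entropy in a holomorphic family and with continuity of pushforward, the assignment $t\mapsto\nu_t$ is weak-$*$ continuous from $\mathcal{D}$ into signed Radon measures on the compact space $\pi|_{\mathcal{C}}^{-1}(\mathcal{D})$. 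Call $t\in\mathcal{D}$ \emph{good} if $\mathcal{C}_t$ is not weakly $\Phi_t$-special, and let $G\subseteq\mathcal{D}$ denote the good set. For each $t_0\in G$, Theorem~\ref{transcendence} applied to $(f_{1,t_0},f_{2,t_0})$ on $\mathcal{C}_{t_0}$ gives $\nu_{t_0}\neq 0$, so there is a continuous $\eta_{t_0}:\mathcal{C}_{t_0}\to\bR$ with $\int_{\mathcal{C}_{t_0}}\eta_{t_0}\,d\nu_{t_0}\neq 0$; after a sign change we may assume this integral is strictly negative. Extend $\eta_{t_0}$ by Tietze's theorem to a continuous $\psi_{t_0}:\pi|_{\mathcal{C}}^{-1}(\mathcal{D})\to\bR$. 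The continuity of $t\mapsto\nu_t$ then yields an open neighbourhood $V_{t_0}\subset\mathcal{D}$ of $t_0$ on which $\int_{\mathcal{C}_t}\psi_{t_0}|_{\mathcal{C}_t}\,d\nu_t<0$ for every $t\in V_{t_0}$.

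To globalise, set $U:=\bigcup_{t_0\in G}V_{t_0}$, an open subset of $\mathcal{D}$ containing $G$. Since $\mathcal{D}$ is a compact metric space, $U$ is second countable and paracompact; by standard shrinking and Urysohn arguments one extracts countably many parameters $\{t_n\}\subset G$ and continuous functions $\rho_n:\mathcal{D}\to[0,1]$, each with $\supp(\rho_n)$ compact and contained in $V_{t_n}\subset U$, such that $\{\rho_n\}$ is locally finite and $\sum_n\rho_n\equiv 1$ on $U$. Define
\[
\psi_{\mathcal{D}}(z) := \sum_n \rho_n(\pi(z))\,\psi_{t_n}(z).
\]
As a locally finite sum of continuous functions, $\psi_{\mathcal{D}}$ is continuous on $\pi|_{\mathcal{C}}^{-1}(\mathcal{D})$; continuity across $\partial U$ is automatic because each $\supp(\rho_n)$ is compact in $V_{t_n}\subset U$. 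For any $t\in G$ one has $t\in U$, so $\sum_n\rho_n(t)=1$ and some $\rho_n(t)>0$; for every such $n$, $t\in\supp(\rho_n)\subset V_{t_n}$, hence $\int_{\mathcal{C}_t}\psi_{t_n}|_{\mathcal{C}_t}\,d\nu_t<0$. Since the remaining terms contribute zero,
\[
\int_{\mathcal{C}_t}\psi_{\mathcal{D}}|_{\mathcal{C}_t}\,d\nu_t \;=\; \sum_n \rho_n(t)\int_{\mathcal{C}_t}\psi_{t_n}|_{\mathcal{C}_t}\,d\nu_t \;<\;0,
\]
which is the required inequality.

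The main subtlety is that $G$ is typically not open in $\mathcal{D}$: preperiodic parameters for a holomorphic family can be dense in the base by Montel-type considerations (case (2) of Definition~\ref{weakly special}), so $G$ is in general only a dense $G_\delta$, and no finite subcover of $\mathcal{D}$ by the $V_{t_0}$ is available. The partition-of-unity argument confined to $U\supset G$, with trivial extension on $\mathcal{D}\setminus U$, is tailored precisely to this situation, since the statement imposes no constraint at parameters outside $G$. The only genuinely non-elementary input is Theorem~\ref{transcendence}, which forces $\nu_{t_0}\neq 0$ for every good $t_0$ and is what makes the local test functions $\eta_{t_0}$ exist in the first place.
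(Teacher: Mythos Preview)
Your partition-of-unity gluing has a genuine gap at the boundary $\partial U$ inside $\mathcal{D}$. The family $\{\rho_n\}$ is locally finite only in $U$, not in $\mathcal{D}$: indeed, in a compact space any locally finite family is finite, and a finite family of bumps compactly supported in $U$ cannot sum to $1$ on all of $U$ unless $U$ is clopen. Since the bad set $\mathcal{D}\setminus G$ can contain parameters $t_*$ at which $\nu_{t_*}=0$ (e.g.\ preperiodic fibres), such $t_*$ lie in $\mathcal{D}\setminus U$ and your supports accumulate near them. Then $\sum_n\rho_n$ equals $1$ on $U$ and $0$ at every point of $\partial U$, so $\psi_{\mathcal{D}}$ is typically discontinuous there; the sentence ``continuity across $\partial U$ is automatic because each $\supp(\rho_n)$ is compact in $V_{t_n}\subset U$'' is false. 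A clean repair is to drop the requirement $\sum\rho_n\equiv 1$ and instead set $\psi_{\mathcal{D}}=\sum_n c_n\,\rho_n(\pi(\cdot))\psi_{t_n}$ with $c_n:=2^{-n}/(1+\|\psi_{t_n}\|_\infty)$; the Weierstrass $M$-test then gives uniform convergence and hence continuity on all of $\pi|_{\mathcal{C}}^{-1}(\mathcal{D})$, while for $t\in G$ every nonzero term in $\sum_n c_n\rho_n(t)\int_{\mathcal{C}_t}\psi_{t_n}\,d\nu_t$ is strictly negative and at least one is present, so the inequality survives.

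For comparison, the paper avoids any gluing by writing down a \emph{single} explicit test function and proving its negativity via the local Hodge index theorem. Concretely, it forms the integrable metrized line bundle $\overline{N}_t=d_2\overline{L}_{1,t}-d_1\overline{L}_{2,t}-\overline{M}^0_t$ on $\mathcal{C}_t$ (supported on $\mathcal{O}_{\mathcal{C}_t}$), takes $\psi_{\mathcal{D}}(t,z)=\chi_{\mathcal{D}}(t)\bigl(d_2G_{F_{1,t}}(\pi_{1,t}(z),1)-d_1G_{F_{2,t}}(\pi_{2,t}(z),1)-\log\theta_t(z)\bigr)$, and observes that $\int_{\mathcal{C}_t}\psi_{\mathcal{D}}|_{\mathcal{C}_t}\,d\nu_t=(\overline{N}_t\cdot\overline{N}_t)\le 0$, with equality iff the metric is trivial, which by Theorem~\ref{transcendence} forces $\mathcal{C}_t$ to be weakly $\Phi_t$-special. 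Continuity is then checked directly from continuity of the escape rates and the smooth variation of the flat metric $\log\theta_t$. So the paper trades your soft analysis for one extra arithmetic input (the local Hodge index), obtaining a uniform test function and sidestepping the boundary issue entirely; your route, once patched as above, is more elementary in that it uses only $\nu_t\neq 0$ from Theorem~\ref{transcendence} plus weak-$*$ continuity, with no appeal to Hodge index.
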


    \begin{proof}
    Recall first that both projections $\pi_i$ are dominant since $\bc$ is not weakly $\phi$-special. 
    For $i=1,2$ we let $L_i$ be the line bundle  on $\bc$ associated to $\pi_i^{*}(\infty)$. We denote the specializations at $t\in B_0(\bC)$ by $L_{i,t}$. So the line bundle $L_{i,t}$ corresponds to the divisor $\pi_{i,t}^{*}(\infty)$. 
    We write $d_i:=\deg(\pi_i)$. 
    %Assume that both $\Phi$ and $\mathcal{C}$ are defined over a number field $\mathcal{K}$. 
    %We fix an embedding $\mathcal{K}\subset \bC$ and in what follows we work locally at the fixed archimedean place corresponding to the standard absolute value. 
    %Since our place is fixed we omit the index $v$ from the subsequent notations. We use additive notation for line bundles for simplicity. 
    In what follows we work with the standard absolute value $|\cdot|$ on $\bC$. In particular, $\|(x,y)\|=\max\{|x|,|y|\}$ for $x,y\in\bC$. 
    
    Let $t\in B_0(\bC)\setminus E_{\bbphi,\bc}$ . 
    As recalled in \S\ref{background}, we have an invariant local metric on $L_{i,t}$ (at the fixed archimedean place $|\cdot|$), which we denote by 
    $\overline{L}_{i,t}= (L_{i,t}, \|\cdot\|_{i,t}),$ defined by Zhang \cite{Zhang small} (see also \cite{dmm1}).
    In particular outside of the support of the canonical meromorphic section $\ell_i$ of $L_{i,t}$, with $\mathrm{div}(\ell_i)=\pi^{*}_{i,t}(\infty)$, we have 
    $$-\log\|\ell_i\|_{i,t}(z)=G_{F_{i,t}}(\pi_{i,t}(z),1)=\lim_{n\to\infty} \frac{\log\|F^n_{i,t}(\pi_{i,t}(z),1)\|}{d^n},$$
    where $F_{i,t}$ is the lift from Definition \ref{finite set}. Here we view $\pi_{i,t}(z)$ as a point in $\bC$ (identified with its image in $\bP_1(\bC)$.) Here $G_{F_{i,t}}: \bC^2\setminus\{(0,0)\}\to\bR$ is the escape rate given by 
    \begin{align*}
    G_{F_{i,t}}(x,y)=\lim_{n\to\infty} \frac{\log\|F^n_{i,t}(x,y)\|}{d^n}.
    \end{align*}
    Recall that by construction and since $t\notin E_{\bbphi,\bc}$ we have $c_1(\overline{L}_{i,t})=\pi_{i,t}^{*}\mu_{f_{i,t}}$.
    Since the line bundle $M_t:=d_2L_{1,t}-d_1 L_{2,t}$ has degree $0$, it has an associated flat metric, which we denote by 
    $\overline{M}^{0}_t$ satisfying $c_1(\overline{M}^{0}_t) \equiv 0$ as in  \cite[Definition 5.10]{YZ:index}. 
    Consider the (local) metrized line bundle 
    \begin{align}
    \overline{N_t}:= \overline{d_2 L_{1,t}-d_1 L_{2,t}-M^0_t}=d_2\overline{L}_{1,t}-d_1\overline{L}_{2,t}-\overline{M}^0_t, 
    \end{align}
    and note that by the flatness of $\overline{M}^0_t$ we have
    $$c_{1}(\overline{N}_t)=d_2c_{1}(\overline{L}_{1,t})-d_1c_{1}(\overline{L}_{2,t})=d_2\pi^{*}_{1,t}\mu_{f_{1,t}}-d_1\pi^{*}_{2,t}\mu_{f_{2,t}}.$$
    Since $\overline{N_t}$ is supported on the trivial line bundle in $\mathcal{C}_t$, by the local Hodge-index \cite[Theorem 2.1]{YZ:index} we know that the local pairing satisfies $(\overline{N}_t\cdot \overline{N}_t)\le 0$ with equality if and only if $\overline{N}_t$ is a trivially metrized line bundle. But if the latter happens, then we have  $d_2c_{1}(\overline{L}_{1,t})=d_1c_{1}(\overline{L}_{2,t})$, 
    which by Theorem \ref{transcendence} implies that $\mathcal{C}_t$ is weakly $\Phi_t$-special. 
    Thus for each $t\in B_0(\bC)\setminus E_{\bbphi,\bc}$ such that $\mathcal{C}_t$ is not weakly $\Phi_t$-special, the local pairing is strictly negative; that is
    \begin{align*}
    \displaystyle\int_{\mathcal{C}_t}\left( d_2G_{F_{1,t}}(\pi_{1,t}(z),1)- d_1G_{F_{2,t}}(\pi_{2,t}(z),1)- \log|\theta_t(z)|\right)(d_2d\pi^{*}_{1,t}\mu_{f_{1,t}}(z)-d_1\pi^{*}_{2,t}d\mu_{f_{2,t}}(z))<0,
    \end{align*}
    where $-\log\|\ell_{M_t}\|_{\overline{M}^0_{t}}:=\log|\theta_t|$. Here $\ell_{M_t}$ is the canonical meromorphic section of $M_t$ with $\mathrm{div}(\ell_{M_t})= d_2\pi_{1,t}^{*}(\infty)-d_1\pi_{2,t}^{*}(\infty)$. Moreover, $\theta_t$ is a theta function associated to the curve $\mathcal{C}_t$ and to $\mathrm{div}(\ell_{M_t})$. In detail, $\log |\theta_t|$ may be expressed as $\sum_{P_t \in \mathrm{div}(\ell_{M_t})}\log \|\tilde{\theta}\|\circ\iota_{P_t}$ (taking multiplicities into account), where  $\iota_{P_t}$ is the embedding of $\mathcal{C}_t$ into  its Jacobian $\text{Jac}(\mathcal{C}_t)$ via a smoothly varying base point $P_t$ and $\|\tilde{\theta}\|$ is given as in \cite[p.310]{GriffithsHarris}. Note that, shrinking $B_0$ if needed, we may choose the embedding $\iota:\mathcal{C}\to \mathrm{Jac}(\mathcal{C})$ over $B_0$ so that for all $t\in B_0$, the curve $\mathcal{C}_t$ is not contained in the theta divisor above $t$. 
    		Now let $\mathcal{D}\subset B_0\setminus E_{\bbphi, \bc}$ be a compact set. 
            Choose a smooth function $\chi_{\mathcal{D}}$ so that 
    		$\chi_{\mathcal{D}}\equiv1$ on $\mathcal{D}$ and $\chi_{\mathcal{D}}\equiv 0$ outside a small neighborhood of $\mathcal{D}$ and define
    		\begin{align}
    			\psi_{\mathcal{D}}(t,z)= \chi_{\mathcal{D}}(t)\left( d_2G_{F_{1,t}}(\pi_{1,t}(z),1)- d_1G_{F_{2,t}}(\pi_{2,t}(z),1)- \log|\theta_t(z)|\right),
    		\end{align}
            for $t\in\mathcal{D}$ and $z\in \mathcal{C}_t(\bC)$. 
            We have shown that this function satisfies \eqref{negativepair}. It remains to show that it is continuous. 
            
            Since by Lemma \ref{finite set} the lift $F_{i,t}$ is holomorphic for $t$ in the compact $\mathcal{D}$, from \cite{Hubbard:Papadopol, Fornaess:Sibony} or
            the proof of \cite[Proposition 1.2]{Branner:Hubbard:1}, we know that the escape rate 
            $G_{F_{i,t}}(x,y)$ is continuous as a function of $(t,x,y)\in \mathcal{D}\times \bC^2\setminus\{(0,0)\}$. 
            Moreover, for each $t\in\mathcal{D}$ the function $\pi_{i,t}(z)$ is continuous outside $\pi^{-1}_{i,t}(\infty)$ and since the escape rate scales logarithmically (i.e. $G_{F_{i,t}}(\al x,\al y)=G_{F_{i,t}}(x,y)+\log|\al|$ for each $\al\in\bC\setminus\{0\}$) we infer that 
            $$d_2G_{F_{1,t}}(\pi_{1,t}(z),1)- d_1G_{F_{2,t}}(\pi_{2,t}(z),1)$$
             is continuous as a function of $z\in\mathcal{C}_t$ with logarithmic singularities at $\mathrm{div}(\ell_{M_t})$ (with multiplicities as prescribed there.)
    		The function $\theta_t$ has been chosen exactly so that these logarithmic singularities `cancel out'. 
            %and we have that for each $t$ the function $\psi_{\mathcal{D}}(t,z)$ is continuous in $z\in\mathcal{C}_t$.
    		Recall that $\log |\theta_t|= \sum_{P_t \in \mathrm{div}(\ell_{M_t})}\log \|\tilde{\theta}\|\circ\iota_{P_t} $. By construction $\log |\theta_t|$ is smooth on $\mathcal{C}_t$ outside of the support of $\mathrm{div}{\ell_{M_t}}$ and has logarithmic singularities with multiplicities given by $\mathrm{div}{\ell_{M_t}}$. Moreover it depends locally smoothly on the period matrix of $\text{Jac}(\mathcal{C}_t)$ for $t\in B_0\setminus E_{\bbphi, \bbc}$  because of the first condition in Lemma \ref{finite set} and as $M_t$ has degree 0 also globally smoothly on $t$. 
            Therefore $\psi_{\mathcal{D}}$ is continuous. 
            This completes the proof of our proposition.
    \end{proof}
    
 %%%%%%%%%%%%%%%%

	\section{A geometric Bogomolov theorem, a height inequality and applications}\label{functional bogomolov}
	
	In this section we aim to prove the height inequality Theorem \ref{splitineq}. 
	As it turns out, this amounts to establishing the dynamical Bogomolov conjecture for split maps over the function field $K$, which is the content of our next result. 
	\begin{theorem}\label{bogomolov hypothesis}
		Let $\ell\ge 2$ and $\bbphi:\mathbb{P}^{\ell}_1\to \mathbb{P}^{\ell}_1$ be an isotrivial-free split polarized  endomorphism of polarization degree $d\ge 2$ defined over $K$. 
		Let $\mathbf{X}\subset \bP^{\ell}_1$ be an irreducible subvariety defined over $K$ with dimension $\ge 1$. 
		Then the following are equivalent: 
		\begin{enumerate}
			\item 
			$\hat{h}_{\bbphi}(\mathbf{X})=0$;
			\item 
			$\mathbf{X}$ contains a generic\footnote{The sequence $\{a_n\}\subset \mathbf{X}(\Kbar)$ is generic if every infinite subsequence of $\{a_n\}$ is Zariski dense in $\mathbf{X}$.} sequence $\{a_n\}\subset \mathbf{X}(\Kbar)$ of points with $\hat{h}_{\bbphi}(a_n)\to 0$\footnote{We say that $\{a_n\}$ is a $\hat{h}_{\bbphi}$-small sequence.}; 
			\item 
			$\mathbf{X}$ is $\bbphi$-preperiodic. 
		\end{enumerate}
	\end{theorem}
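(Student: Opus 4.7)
The implications (3)$\Rightarrow$(1) and (1)$\Rightarrow$(2) are essentially formal and I would dispatch them first. For (3)$\Rightarrow$(1): if $\bbphi^m(\mathbf{X}) = \bbphi^n(\mathbf{X})$ for some $m > n \geq 0$, then the functoriality $\hat{h}_{\bbphi}(\bbphi(\mathbf{Y})) = d\,\hat{h}_{\bbphi}(\mathbf{Y})$, a consequence of $\bbphi^{*}\overline{L}_{\bbphi} \simeq d\overline{L}_{\bbphi}$ together with the definition \eqref{height def}, gives $d^{m}\hat{h}_{\bbphi}(\mathbf{X}) = d^{n}\hat{h}_{\bbphi}(\mathbf{X})$ and hence $\hat{h}_{\bbphi}(\mathbf{X}) = 0$. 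For (1)$\Rightarrow$(2), the Zhang-Gubler fundamental inequality (Lemma \ref{zhang ineq}) combined with non-negativity of $\hat{h}_{\bbphi}$ forces each successive minimum $e_{i}(\mathbf{X}, \overline{L}_{\bbphi})$ to vanish; in particular $e_{1} = 0$ exhibits a generic $\hat{h}_{\bbphi}$-small sequence in $\mathbf{X}$.

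The substantive content is (2)$\Rightarrow$(3), which I plan to prove by induction on $\dim \mathbf{X}$, the pivotal case being that of a curve $\mathbf{X} \subset \bP_{1}^{\ell}$. Given a generic $\hat{h}_{\bbphi}$-small sequence $\{a_{n}\} \subset \mathbf{X}(\Kbar)$, the Yuan-Zhang arithmetic equidistribution theorem \cite{Yuan:Zhang:new} in the adelic function-field setting produces, at each place $v$ of $K$, equidistribution of the Galois orbits of $a_{n}$ to the canonical measure $c_{1}(\overline{L}_{\bbphi}|_{\mathbf{X}})^{\dim \mathbf{X}}/\deg_{L}(\mathbf{X})$. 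I would fix a generic archimedean place $v_{t_{0}}$ coming from a point $t_{0} \in B(\bC)$ at which $\mathcal{X}_{t_{0}}$ is irreducible of the expected dimension and each $f_{i,t_{0}}$ has degree $d$; by the split structure of $\bbphi$, the canonical measure on $\mathcal{X}_{t_{0}}$ is the restriction of $\bigotimes_{i}\mu_{f_{i,t_{0}}}$ to $\mathcal{X}_{t_{0}}$.

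To close the loop in the curve case, I project $\mathcal{X}_{t_{0}}$ onto each pair of coordinates $(i,j)$ with dominant image, obtaining a curve $\mathcal{C}_{ij,t_{0}} \subset \bP_{1}^{2}$. The pushforward of the equidistributed measure to $\mathcal{C}_{ij,t_{0}}$ coincides, up to normalization, with both $\pi_{i,t_{0}}^{*}\mu_{f_{i,t_{0}}}$ and $\pi_{j,t_{0}}^{*}\mu_{f_{j,t_{0}}}$, so Theorem \ref{transcendence} declares $\mathcal{C}_{ij,t_{0}}$ to be weakly $(f_{i,t_{0}}, f_{j,t_{0}})$-special. Since this holds at a generic archimedean $t_{0}$, the spread-out curve $\mathcal{C}_{ij}$ over $K$ is weakly $(\bbf_{i}, \bbf_{j})$-special; the isotrivial-free hypothesis, together with the density of small fiber-wise points coming from the same sequence $\{a_n\}$, upgrades this conclusion to $(\bbf_{i}, \bbf_{j})$-preperiodicity of $\mathcal{C}_{ij}$, arguing analogously to the dynamical Manin-Mumford conclusion of \cite{dmm1} to rule out non-torsion exceptional cosets. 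Compatibility of preperiodicity across all pair projections forces $\mathbf{X}$ itself to be $\bbphi$-preperiodic. For $\dim \mathbf{X} \geq 2$, I would reduce to the curve case by intersecting $\mathbf{X}$ with generic pullbacks of $\bbf_{i}$-preperiodic divisors and selecting a component on which the generic small-points hypothesis persists, re-applying equidistribution on the resulting lower-dimensional subvariety.

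I expect the principal obstacle to lie in the exceptional (Lattes or multiplicative) factors of $\bbphi$: even with isotrivial-freeness, non-torsion cosets of the underlying algebraic groups can produce curves that are weakly special but not preperiodic in the sense of Definition \ref{weakly special}, and separating these from genuinely preperiodic curves requires supplementary torsion arguments beyond what Theorem \ref{transcendence} provides on its own. A secondary technical point is verifying that the canonical metric $\overline{L}_{\bbphi}$ meets the integrability and admissibility hypotheses of \cite{Yuan:Zhang:new} in the adelic function-field formalism; this should be routine from its Tate-limit construction but must be spelled out to apply equidistribution uniformly across places.
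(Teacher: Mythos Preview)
Your handling of (3)$\Rightarrow$(1) and (1)$\Leftrightarrow$(2) is fine. The real problem is in (2)$\Rightarrow$(3), and it is not merely a technical wrinkle.

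The function field $K=\Qbar(B)$ has no archimedean places: its places are the closed points of $B$, all non-archimedean. So the ``generic archimedean place $v_{t_0}$ coming from a point $t_0\in B(\bC)$'' does not exist in the adelic structure governing $\hat h_{\bbphi}$. If you instead apply Gubler's function-field equidistribution to the small generic sequence in $\mathbf{X}(\Kbar)$, you obtain equidistribution at each $v\in M_K$ toward the Chambert--Loir measure on the Berkovich analytification $\mathbf{X}^{\mathrm{an}}_{K_v}$, which is a non-archimedean object; Theorem~\ref{transcendence} is a statement about complex measures of maximal entropy and says nothing about these. If on the other hand you mean to invoke the Yuan--Zhang equidistribution of \cite{Yuan:Zhang:new} on the total space $\mathcal{X}/\Qbar$ (which \emph{does} see archimedean places of $\Qbar$), that theorem requires $\mathcal{X}$ to be non-degenerate, i.e.\ $\hat h_{\bbphi}(\mathbf{X})>0$; but under (2) you have already shown $\hat h_{\bbphi}(\mathbf{X})=0$, so the hypothesis fails. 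Indeed in the paper Proposition~\ref{equidistribution} is \emph{deduced from} Theorem~\ref{bogomolov hypothesis}, so invoking it here would be circular.

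The paper's route avoids equidistribution entirely, as it explicitly notes. For (2)$\Rightarrow$(3) one first reduces to hypersurfaces $\mathbf{H}\subset\bP_1^{\ell}$ as in \cite[\S2]{dmm2}. On $\mathbf{H}$ one applies the function-field arithmetic Hodge index theorem \cite{Alex} (together with Gubler's inequality and a Lefschetz-type argument to arrange numerical triviality) to produce nonzero constants $c_1,\ldots,c_{\ell}$ with $\sum_j c_j\hat h_{\bbf_j}(\bbx_j)=0$ identically on $\mathbf{H}(\Kbar)$. Baker's theorem \cite{Baker:finiteness}, using isotrivial-freeness, then shows that fixing $\ell-1$ preperiodic coordinates forces the last to be preperiodic; hence $\mathbf{H}$ contains a Zariski-dense set of $\bbphi$-preperiodic points. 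Only now does one embed $\Kbar\hookrightarrow\bC$ and invoke the dynamical Manin--Mumford results of \cite{dmm1,dmm2} (or \cite{Gubler:split} in the Latt\`es case) over $\bC$ to conclude that $\mathbf{H}$ is $\bbphi$-preperiodic. Theorem~\ref{transcendence} plays no role in this argument.
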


    Moreover, we prove an analogous geometric Bogomolov result in the case that $\mathbf{X}$ is a curve and $\bbphi$ is not isotrivial-free. To state it, we need to define the notion of an isotrivial curve. 
    
    \begin{definition}\label{isotrivial curve}
    We say that $\bbphi = (\mathbf{f}_1, \mathbf{f}_2)$ is isotrivial if both $\bbf_1$ and $\bbf_2$ are isotrivial.  
    If so, let $\mathbf{M}_i$ be Möbius transformations defined over $\overline{K}$ such that $\mathbf{M}_i\circ \mathbf{f}_i\circ \mathbf{M}_i^{-1}$ is defined over $\overline{\mathbb{Q}}$ for $i = 1,2$. 
    A curve $\bc\subset  \mathbb{P}_1\times \mathbb{P}_1$ is $\bbphi$-isotrivial if $(\mathbf{M}_1, \mathbf{M}_2)(\bc)$ is defined over $\overline{\mathbb{Q}}$. 
    \end{definition} 
    
     \noindent Note that this is a field theoretic condition that does not reflect deep  dynamical properties. 
     The analogous geometric Bogomolov statement is then as follows. 
  
    \begin{theorem}\label{mainiso}
        Let  $\bbphi=(\bbf_1,\bbf_2):\mathbb{P}_1^2 \rightarrow \mathbb{P}_1^2$ be a split polarized endomorphism of polarization degree $d\ge 2$ defined over $K$ and assume that at least one of $\bbf_1$ or $\bbf_2$ is isotrivial. 
         Let $\bc \subset \mathbb{P}_1^2$ be an irreducible curve defined over $K$ whose projection to each factor $\mathbb{P}_1$ is dominant. Let $\mathbf{f}:\mathbb{P}_1\to\bP_1$ be an endomorphism of degree $d$ over $K$. 
         Then the following are equivalent. 
        \begin{enumerate}
        \item 
        $\hat{h}_{\bbphi}(\bc)=0$;
        \item 
        $\bc$ contains a generic $\hat{h}_{\bbphi}$-small sequence;
        \item 
        $\bc$ is $\bbphi$-isotrivial and in particular $\bbphi$ is isotrivial;
        \item 
        $\hat{h}_{\bbf\times\bbphi}(\bP_1\times \bc)=0$;
        \item 
        $\hat{h}_{\bbphi\times\bbphi}(\bc\times\bc)=0$. 
        \end{enumerate}
        \end{theorem}

    To place these results in the context of the work of Dimitrov-Gao-Habegger \cite{DGH:pencils,DGH:uml}, K\"uhne \cite{kuhne:uml} and Yuan-Zhang \cite{Yuan:Zhang:new} we make the following remark. 
    
	\begin{remark}\label{nondegenerate}
		Subvarieties with $\hat{h}_{\bbphi}(\mathbf{X})\neq 0$ coincide with non-degenerate subvarieties in the sense of \cite{Yuan:Zhang:new}. 
		This follows by Lemma \ref{height positive}. 
		One can also infer that the two conditions coincide a posteriori by comparing \cite[Lemma 5.22]{Yuan:Zhang:new} and Gauthier-Vigny's result \cite[Theorem B]{GV}.
		Thus, in the language of \cite{Yuan:Zhang:new}, Theorem \ref{bogomolov hypothesis} characterizes the non-degenerate subvarieties. 
	\end{remark}

    An important ingredient of the proof of Theorems \ref{bogomolov hypothesis}, which the authors recently worked out with Wilms is \cite[Theorem 1.1]{note}. It allows to apply the strategy in \cite{dmm2} to give a proof of the dynamical Manin-Mumford conjecture for split endomorphisms over the complex numbers. The latter is important for our proof as we will reduce Theorems \ref{bogomolov hypothesis}, \ref{mainiso} to a  Manin- Mumford type statement over $\mathbb{C}$. For this reduction we rely once again on \cite[Theorem 1.1]{note} as well as on Baker's work \cite{Baker:finiteness}. In the isotrivial setting of Theorem \ref{mainiso}, as we shall see, the mathematical content of the proof is slightly different given that the role of preperiodic curves is replaced by that of isotrivial curves.

    \smallskip 
    
     \noindent Before we begin with our proof we point out that the following holds. 
        \begin{remark} \label{exceptionalnoniso}
        If $\bbf:\bP_1\to\bP_1$ is defined over $K$ and is not isotrivial then $\bbf$ is exceptional if and only if $\bbf$ is a flexible Latt\`es map so that the group $G_{\bbf}$ from Definition \ref{ordinary} is an elliptic curve and $\phi_{\bbf}$ is given by multiplication by an integer. 
        \end{remark}
	
   \noindent We start by investigating the case of hypersurfaces. 
	
	\begin{proposition}\label{bogomolovhyper}
		Let $\ell\ge 2$ and $\bbphi=(\bbf_1,\ldots,\bbf_{\ell}):\mathbb{P}^{\ell}_1\to \mathbb{P}^{\ell}_1$ be an isotrivial-free split polarized endomorphism defined over $K$.
		Let $\mathbf{H}\subset \mathbb{P}^{\ell}_1$ be an irreducible hypersurface defined over $K$ which projects dominantly onto any subset of $\ell-1$ coordinate axis. Assume that $\mathbf{H}$ contains a generic sequence $\{a_n\}\subset \mathbf{H}(\Kbar)$ with $\hat{h}_{\bbphi}(a_n)\to 0$ as $n\to\infty$. 
		Then 
			$\mathbf{H}$ is $\bbphi$-preperiodic. Moreover, either $\ell = 2$ or all maps $\bbf_1, \ldots, \bbf_{\ell}$ are exceptional. 
	\end{proposition}
	
	\begin{proof}
    Let $\mathbf{H}\subset \bP_1^{\ell}$ be as in the statement of the proposition. 
         We write $\pi_i: \bP_1^{\ell}\to\bP_1$ to denote the projection to the $i$-th coordinate for $i\in\{1,\ldots,\ell\}$. 
         By  \cite[Theorem 1.1]{note} our assumptions imply that there exist a closed subset $\mathbf{E}\subsetneq \mathbf{H}$ such that 
         \begin{align}\label{coincidence}
         \{\bbx\in \mathbf{H}\setminus \mathbf{E}(\Kbar): \hat{h}_{\bbf_i}(\pi_i(\bbx))=0, i\in\{1,\ldots,\ell-1\}\}=\{\bbx\in \mathbf{H}\setminus \mathbf{E}(\Kbar): \hat{h}_{\bbphi}(\bbx)=0\}.
         \end{align}
         Since each $\bbf_{i}$ is not isotrivial, \cite[Corollary 1.8]{Baker:finiteness} implies that $\hat{h}_{\bbf_i}(\mathbf{a})=0$ if and only if $\mathbf{a}\in\mathrm{Prep}(\bbf_i)$ for $i\in \{1,\ldots,\ell\}$. 
         Letting $p_{\ell}:\mathbf{H}\to \bP_1^{\ell-1}$ denote the projection to the first $\ell-1$ copies of $\bP_1$, and recalling that $\hat{h}_{\bbphi}=\displaystyle\sum_{i=1}^{\ell} \hat{h}_{\bbf_i}$, equation \eqref{coincidence} can therefore be rephrased as 
                 \begin{align}\label{coincidence2}
                  p_{\ell}^{-1}(\mathrm{Prep}(\bbf_1)\times \cdots\times \mathrm{Prep}(\bbf_{\ell-1}))\setminus \mathbf{E}(\Kbar)=\mathrm{Prep}(\bbphi)\cap (\mathbf{H}\setminus \mathbf{E}(\bar{K})). 
                          \end{align}
         Henceforth we embed $\Kbar\hookrightarrow\bC$ and abusing the notation slightly we view $\bbf_1,\ldots,\bbf_{\ell}$ and $\mathbf{H}$ as being defined over $\Kbar \subset\bC$. 
         Since each $\bbf_i$ has infinitely many preperiodic points, $\mathrm{Prep}(\bbf_1)\times \cdots\times \mathrm{Prep}(\bbf_{\ell-1})$ is Zariski dense in $\bP_1^{\ell-1}$. Since also $p_{\ell}$ is dominant and $\mathbf{E}$ is a strict closed subset of $\mathbf{H}$, equation \eqref{coincidence2} implies that $\mathbf{H}$ contains a Zariski dense set of complex  points in $\mathrm{Prep}(\bbphi)$. 
         Now if $\ell>2$, then by \cite[Theorem 2.2]{dmm2} and in view of Remark \ref{exceptionalnoniso} we infer that all components $\bbf_i$ of $\bbphi$ are Lattès maps corresponding to integer-multiplications on elliptic curves. By \cite{Ra2} (see also \cite{Gubler:split} and \cite[Theorem 1.4]{Habegger:special} for special cases) it follows that $\mathbf{H}$ is $\bbphi$-preperiodic. 
        If on the other hand $\ell=2$, then the conclusion follows by \cite[Theorem 1.1]{dmm1} if one of the $\bbf_i$ is ordinary or by \cite{Ra2}  otherwise.
	\end{proof}
	
	\noindent We are now ready to prove Theorem \ref{bogomolov hypothesis}. 
	
	\noindent\emph{Proof of Theorem \ref{bogomolov hypothesis}:}
	That $(1)$ and $(2)$ are equivalent follows by Gubler's fundamental inequalities as in Lemma \ref{zhang ineq}. In detail, first notice that the height $\hat{h}_{\bbphi}$ takes non-negative values when evaluated at points and as a result all the successive minima associated to $\overline{L}_{\bbphi}$ are non-negative. It then follows by \cite[Lemma 4.1]{Gubler:split} that $\hat{h}_{\bbphi}(\mathbf{X})=0$ if and only if the first successive minimum of $h_{\overline{L}_{\mathbf{\Phi}}|_{\mathbf{X}}}$ is equal to zero or equivalently if $(2)$ holds. 
	That $(3)$ implies $(1)$ is well-known, see e.g \cite[Theorem 3.2 (iv)]{Xander}. 
	It remains to see that $(2)$ implies $(3)$. 
	But as in \cite[\S{2},Proposition 2.1]{dmm2} (see also \cite{mm:new}), the situation reduces to that of Proposition \ref{bogomolovhyper}. 
	This completes our proof. 
	\qed

  \smallskip
  
  \noindent To prove Theorem \ref{mainiso} we also need the following result that is implicitly given in \cite{Baker:finiteness} and builds on some ideas that first appeared in \cite{Baker:lowerbound}. 
   \begin{theorem}\label{baker} \cite{Baker:finiteness} 
       Let $\mathbf{f}\in K(z)$ be a non-isotrivial rational map of degree at least $2$ and let $K'$ be a finite extension of $K$. Then there exists a constant $C>0$ depending only on $\bbf$ and $K$ such that 
       $$\#\{P\in \bP_1(K')~:~\hat{h}_{\bbf}(P)=0\}\le C[K':K]\log[K':K].$$ 
       In particular, there is a constant $C'$ depending only on $K$ and $\bbf$ such that if a finite extension $L$ of $K$ contains $N$ distinct $\bbf$-preperiodic points, then 
       $$[L:K]\ge C' \frac{N}{\log N}.$$
   \end{theorem}

\begin{remark}\label{nftoff}
Theorem \ref{baker} is established in \cite[Theorem 1.14]{Baker:lowerbound} in the case of number fields. The proof relies on the existence of an archimedean place to give a  repulsion of points in parts of $\bP^1(\bC)$; c.f. \cite[Lemma 3.1]{Baker:lowerbound}. In view of \cite[Theorem 1.11]{Baker:finiteness} one can get the same repulsion property in a place of genuinely bad reduction of a rational function $f$ defined over function field. Moreover, non-isotrivial maps have a place of bad reduction by \cite[Theorem 1.9]{Baker:finiteness}. Therefore, Theorem \ref{baker} can be proved following the same lines as its number field counterpart. 
\end{remark}

   \begin{proof}[Proof of Theorem \ref{baker}]
  We will use the notation of \cite{Baker:finiteness}. 
  Let $K'$ be a finite extension of $K$ and let $D:=[K':K]$. By the product formula the canonical height decomposes into local contributions
\begin{align}\label{decomposition}
\hat{h}_{\bbf}(x)+\hat{h}_{\bbf}(y)=\frac{1}{D}\sum_{v\in M_{K'}}g_{\bbf,v}(x,y),
\end{align}
for two distinct $x,y\in \mathbb{P}_1(K')$.
 Here $g_{\bbf,v}$ is the two variable dynamical Green’s function as in \cite[\S 3]{Baker:finiteness}. 
 Since $\mathbf{f}\in K(z)$ is not isotrivial, there exists a place $v_0\in M_K$ where it has genuinely bad reduction by \cite[Theorem 1.9]{Baker:finiteness}. 
 By \cite[Theorem 1.11]{Baker:finiteness}, there is a finite covering $V_1,...,V_s$ of $\mathbb{P}_1(\bC_{v_0})$ and a constant $C_1>0$ such that 
 \begin{align}\label{repulsion}
 g_{\mathbf{f},v_0}(x,y)\ge C_1, ~x,y\in V_i, ~i=1,\ldots, s. 
 \end{align}
 Let $x_1,\ldots,x_N\in\mathbb{P}_1(K')$ be $\bbf$-preperiodic points for $N\in\bN$ and let $M=[\frac{N-1}{s}]+1$. By the pigeonhole principle we have that at least $M$ of our points $x_1,\ldots,x_N$ belong in the same $V_i$. After relabelling we may assume that $x_1,\ldots,x_M\in V_i$. 
 In what follows, we define 
 \begin{align}
g(x_1,\ldots,x_M):=\frac{1}{D} \sum_{1\le i\neq j\le M}\sum_{v\in M_{K'}}g_{\mathbf{f},v}(x_i,x_j).
\end{align}
Since the $x_i$ are preperiodic, by \eqref{decomposition} we infer that 
\begin{align}\label{zero}
g(x_1,\ldots,x_M)=0.
\end{align}
On the other hand, by \cite[Theorem 3.15]{Baker:finiteness} we have that $g_{\mathbf{f},v}\ge 0$ at all places $v\in M_{K'}$ where $\mathbf{f}$ has potential good reduction, so \cite[Theorem 1.14]{Baker:lowerbound}, our assumption that $x_1,\ldots, x_M\in V_i$ and \eqref{repulsion} imply that 
\begin{align}\label{lowerbound}
g(x_1,\ldots,x_M)\ge \frac{C_1}{D}M^2-C_2M \log M,
\end{align}
for positive constants $C_1,C_2$ that only depend on $\mathbf{f}$ and $K$. 
Combining \eqref{zero} and \eqref{lowerbound} we infer that 
$$\frac{C_1}{2D} M\le C_2\log M,$$
 and since $N\le Ms+1$ by our definition of $M$, we get that $N\le CD\log D$ as claimed. It also follows directly that $[L:K]\ge C'\frac{N}{\log N}$ for any extension $L$ of $K$ that contains $N$ preperiodic points. 
   \end{proof}

   \begin{corollary}\label{prep_extension}
 Let $\mathbf{f}\in K(z)$ be a non-isotrivial rational map of degree at least $2$ and let $D\in \mathbb{Z}$. Then the set 
 $$\{P\in \mathrm{Prep}(\bbf)~:~[K(P):K]\le D\}$$
 is finite. 
   \end{corollary}

   \begin{proof}
Let $P\in \mathrm{Prep}(\bbf)$ such that $[K(P):K]\le D$. We have that $\mathcal{O}_{\bbf}(P)=\{\bbf^i(P)~:~i\in\mathbb{Z}_{\ge 0}\}\subset K(P)$ and by Theorem \ref{baker} and our assumption that $[K(P):K]\le D$ we infer that the size of the orbit of $P$ is bounded by a constant that depends only on $D, \bbf, K$. 
This gives bounds for the smallest non-negative integer $m$ so that $\bbf^m(P)$ is periodic, and for the period of $\bbf^m(P)$, that depend only on $D,\bbf, K$. Therefore there are only finitely many such preperiodic points. 
   \end{proof}
 \noindent  We can now prove Theorem \ref{mainiso}. 
 
 \noindent  \emph{Proof of Theorem \ref{mainiso}:}
Since $\hat{h}_{\bbphi}(\bc)=\hat{h}_{\bbf\times\bbphi}(\bP_1\times\bc)=\frac{1}{2}\hat{h}_{\bbphi\times\bbphi}(\bc\times\bc)$, it is clear that $(1),(4)$ and $(5)$ are equivalent. That $(1)$ and $(2)$ are equivalent follows by Gubler's inequality in Lemma \ref{zhang ineq} as in the proof of Theorem \ref{bogomolov hypothesis}. 
 
 To see that $(3)$ implies $(2)$ note that if $\mathbf{M}_1,\mathbf{M}_2$ are M\"obius maps such that $(\mathbf{M}_1,\mathbf{M}_2)\circ \bbphi\circ(\mathbf{M}_1^{-1},\mathbf{M}_2^{-1})$ is defined over $\Qbar$ and $(\mathbf{M}_1, \mathbf{M}_2)(\bc)$ is defined over $\overline{\mathbb{Q}}$, then $\bc$ contains infinitely many points $P$ with $\hat{h}_{\bbphi}(P)=0$. 
 
 It remains to show that $(2)$ implies $(3)$. 
   	Let $\bc\subset\bP_1^2$ be an irreducible curve over $K$ such that the projections $\pi_1,\pi_2:\bc\to \mathbb{P}_1$ to both coordinates are dominant. Assume that $\bc$ contains a generic $\hat{h}_{\bbphi}$-small sequence. 
     By  \cite[Theorem 1.1]{note} we infer that  
                \begin{align}\label{coincidencecurves}
                \{\bbx\in \mathbf{C}(\Kbar): \hat{h}_{\bbf_1}(\pi_1(\bbx))=0\}=\{\bbx\in \mathbf{C}(\Kbar): \hat{h}_{\bbphi}(\bbx)=0\}.
                \end{align}
       Since $\bbf_1$ has infinitely many preperiodic points we therefore deduce  that $\bc$ contains a Zariski-dense set of points $\bbx$ of height $\hat{h}_{\bbphi}(\bbx)=0$. If $\bbphi$ is isotrivial and $(\mathbf{M}_1, \mathbf{M}_2)$ is a pair of Möbius transformations such that $(\mathbf{M}_1,\mathbf{M}_2)\circ \bbphi\circ(\mathbf{M}_1^{-1},\mathbf{M}_2^{-1})$ is defined over $\overline{\Q}$, then in view of \cite[Theorem 1.1]{D:stableheight} this implies that $(\mathbf{M}_1,\mathbf{M}_2)(\bc)$ contains infinitely many points in $\bP_1^2(\overline{\mathbb{Q}}$). Therefore, $(\mathbf{M}_1, \mathbf{M}_2)(\bc)$  is defined over $\overline{\mathbb{Q}}$ and $\bc$ is $\bbphi$-isotrivial as claimed. 
       Assume now that exactly one of $\mathbf{f}_1$ or $\mathbf{f_2}$ is isotrivial to end in a contradiction. Without loss of generality we may assume that $\mathbf{f}_1$ is isotrivial. We let $\mathbf{M}_1$ be a M\"obius transformation such that $\mathbf{M}_1\circ \mathbf{f}_1\circ\mathbf{M}^{-1}_1$ is defined over $\Qbar$. 
           In view of \cite[Corollary 1.8]{Baker:finiteness}, equation \eqref{coincidencecurves} implies that the curve $(\mathbf{M}_1, \text{id})(\bc)$ contains infinitely many points $(x,y)$ such that the first coordinate is in $\mathbb{P}_1(\overline{\Q})$ (and thus in particular in $\mathbb{P}_1(K)$) and the second coordinate is an $\bf{f}_2$-preperiodic point. Recalling also that $\mathbf{C}$ projects dominantly to both coordinates and since the points $(x,y)$ satisfy the algebraic equation given by $(\mathbf{M}_1, \text{id})(\bc)$, we infer that the degree $[K(y):K]$ is bounded. In other words, we have infinitely many preperiodic points $y$ of $\mathbf{f}_2$ with bounded degree $[K(y):K]$. This is a contradiction to our assumption that $\mathbf{f}_2$ is non-isotrivial. Indeed, by Corollary \ref{prep_extension} we know that the non-isotrivial $\mathbf{f}_2$ has only finitely many preperiodic points of bounded degree. 
       This completes the proof of this theorem. 
\qed

    \smallskip
   
	\noindent We can now deduce Theorem \ref{splitineq}. 

	\smallskip
    
	\noindent\emph{Proof of Theorem \ref{splitineq}:} 
	If $\mathbf{X}$ has dimension equal to $0$ (and is therefore a point) then the result follows combining Call-Silverman's result \cite[Theorem 4.1]{Call:Silverman} and Baker's \cite[Theorem 1.6]{Baker:finiteness}, since $\bbphi$ is isotrivial-free. Indeed, either $\hat{h}_{\bbphi}(\mathbf{X})=0$ and since $\bbphi$ is isotrivial-free by \cite[Theorem 1.6]{Baker:finiteness} we have that $\mathbf{X}$ is $\bbphi-$preperiodic as in case (1) of the Theorem, or $\hat{h}_{\bbphi}(\mathbf{X})>0$ and by Call-Silverman's \cite[Theorem 4.1]{Call:Silverman} we get  a positive constant $c_1$ as in case (2) of the Theorem.
 We thus let $\ell\ge 2$ and $\mathbf{X}\subset\mathbb{P}^{\ell}_1$ be a positive dimensional irreducible subvariety over $K$. 
	First note that by Theorem \ref{bogomolov hypothesis} we have that $\hat{h}_{\bbphi}(\mathbf{X})=0$ if and only if $\mathbf{X}$ is $\bbphi$-preperiodic 
	or equivalently $\mathcal{X}^{\bbphi,\star}=\emptyset$. 
	Assume now that $\mathbf{X}$ is not $\bbphi$-preperiodic, so that $\mathcal{X}^{\bbphi,\star}\neq\emptyset$.  
	We need to show that $\mathcal{X}^{\bbphi,\star}\neq\emptyset$ is Zariski open and dense in $\mathcal{X}$. 
	To this end, it suffices to show that there are at most finitely many $\mathcal{Z}\subset \mathcal{X}$ that are irreducible components of $\Phi$-special subvarieties projecting dominantly onto $B$ and are maximal with this property. 
	Equivalently, we have to show that $\mathbf{X}$ (which is not $\bbphi$-preperiodic) contains at most finitely many maximal $\bbphi$-preperiodic subvarieties. 
	This in turn follows from our geometric Bogomolov Theorem \ref{bogomolov hypothesis} (a Manin-Mumford version would suffice). 
	Now since $\mathbf{X}$ is not $\bbphi$-preperiodic, by Theorem \ref{bogomolov hypothesis} we infer that  $\hat{h}_{\bbphi}(\mathbf{X})>0$. Thus by Theorem \ref{assume bog} or by \cite[Theorem 6.2.2]{Yuan:Zhang:new} (recall  Remark \ref{nondegenerate}) we deduce that there is a proper Zariski closed $\mathcal{V}\subset \mathcal{X}$ so that our desired inequality \eqref{lower bound} holds for all $P\in (\mathcal{X}^{\bbphi,\star}\setminus\mathcal{V})(\Qbar)$. 
	We will show that our inequality holds for all $\Qbar$-points of $\mathcal{X}^{\bbphi,\star}$ by induction of the dimension on $\mathcal{X}$. If $\mathcal{X}$ has dimension equal to $1$, then the exceptional set $\mathcal{V}$ is merely a finite collection of points which we can incorporate by adjusting the constants in the inequality. Thus the base case follows. Now assume that $\mathcal{X}$ has dimension at least $2$. 
	We decompose $\mathcal{V}$ into finitely many irreducible components $\mathcal{V}_i$ for $i=1,\ldots,s$. Since the inequality \eqref{lower bound} holds on each vertical fiber - adjusting the constants if neccessary, we assume henceforth that all components $\mathcal{V}_i$ are horizontal over $B_0$.
	So it suffices to show that the inequality holds at all points in $\mathcal{V}_i\cap \mathcal{X}^{\bbphi,\star}$ for $i=1,\ldots,s$. 
	Notice that $\mathcal{V}_i$ has codimension at least one in $\mathcal{X}$, so we may induct on its dimension and apply our theorem to each $\mathcal{V}_i$. Adjusting the constants, we conclude that the inequality \eqref{lower bound} holds for all $\Qbar$-points in 
	$\mathcal{X}^{\bbphi,\star}\subset(\mathcal{X}^{\bbphi,\star}\setminus\mathcal{V})\cup \mathcal{V}^{\bbphi,\star}_1\cup\cdots\cup \mathcal{V}^{\bbphi,\star}_s$. 
	This completes our proof. 
	\qed

\smallskip 

\noindent In fact, arguing exactly as in the proof above and using Theorem \ref{mainiso} in place of Theorem \ref{bogomolov hypothesis} we infer that the following holds. 
\begin{theorem}\label{splitineqiso}
	Let $\bc \subset \mathbb{P}_1^2$ be a curve over $K$  and $\bbphi:\mathbb{P}_1^2 \rightarrow \mathbb{P}_1^2$ be a polarized split  map $\bbphi = (\mathbf{f}_1,\mathbf{f}_2)$ such that at least one of $\mathbf{f}_1, \mathbf{f}_2$ is  isotrivial. If $\bbphi$ is isotrivial assume that $\bc$ is not $\bbphi$-isotrivial. Then  $\mathcal{C}^{\bbphi, \star}$ is Zariski open in $\mathcal{C}$ and there are constants $c_1=c_1(\bbphi,\bc)>0$ and $c_2=c_2(\bbphi,\bc)$ such that 
	\begin{align*}
		\hat{h}_{\Phi}(P) \ge c_{1}h_{\mathcal{N}}(\pi(P))-c_{2}, \text{ for all }P\in\mathcal{C}^{\bbphi, \star}(\Qbar).
	\end{align*}
\end{theorem}

\subsection{Applications of Theorem \ref{splitineq}: arithmetic equidistribution}
	
	Towards the proof of Theorems \ref{relative bogomolov} and \ref{umb} we will need an arithmetic equidistribution result. To this end, we rely on recent work of Yuan and Zhang \cite{Yuan:Zhang:new}, who generalized K\"uhne's equidistribution theorem \cite[Theorem 1]{kuhne:uml}, by building a theory of adelic metrized line bundles on essentially quasiprojective varieties. 
    Using this theory Yuan \cite{yuan} has recently provided a new proof of \cite[Theorem 2 and Theorem 3]{kuhne:uml}. 
	
    Let $\bbphi$ and $\mathbf{X}$ be as in \S\ref{functional bogomolov}. 
	Associated to the pair $(\bbphi,\mathbf{X})$ we have a pair 
	$(\Phi,\mathcal{X})$ defined over a number field $\mathcal{K}$, where $\Phi: B_0\times \mathbb{P}^{\ell}_1\to B_0\times \mathbb{P}^{\ell}_1$ and $\mathcal{X}\subset B_0\times\mathbb{P}^{\ell}_1$. 
    In what follows we fix an embedding $\mathcal{K}\subset \bC$ and as a result an archimedean place $|\cdot|_{\infty}$ of $\mathcal{K}$ extending the classic absolute value in $\bQ$. 
    By \cite[\S 6.1.1, Theorem 6.1]{Yuan:Zhang:new} we can associate an invariant adelic line bundle $\overline{L}_{\Phi}$ to the endomorphism $\Phi$ which has the property that $\Phi^{*} \overline{L}_{\Phi}\iso \overline{L}_{\Phi}^{\otimes d}.$ 
	We further have a canonical measure $\mu_{\overline{L}_{\Phi}}|_{\mathcal{X}}$ defined in  $\mathcal{X}(\mathbb{C})$ as in \cite[\S3.6.6]{Yuan:Zhang:new}.
	
	\begin{definition}\label{generic small}
		We say that $Z:=\{x_n\}_n\subset \mathcal{X}(\Qbar)$ is a \emph{$(\Phi,\mathcal{X})$-generic small sequence} if every infinite subsequence of $Z$ is Zariski dense in $\mathcal{X}$ and moreover $\hat{h}_{\Phi}(x_n)\to 0$ as $n\to \infty$. 
	\end{definition}
	We can now state the equidistribution theorem that will be crucial for our proof.
	\begin{proposition}\label{equidistribution}
        Let $\ell\in\bZ_{\ge 1}$. 
		Let $\bbphi=(\bbf_1,\ldots,\bbf_{\ell}): \mathbb{P}^{\ell}_1\to \mathbb{P}^{\ell}_1$ be a split polarized endomorphism of polarization degree at least $2$ and let  $\mathbf{X}\subset \mathbb{P}^{\ell}_1$ be an irreducible subvariety, both defined over $K$. 
        Assume that one of the following holds. 
        \begin{enumerate}
        \item $\bbphi$ is isotrivial-free and $\mathbf{X}$ is not $\bbphi$-preperiodic; or 
        \item $\ell=2$, exactly one of $\bbf_1$ or $\bbf_2$ is isotrivial and $\mathbf{X}$ is a curve whose projection to each factor $\bP_1$ is dominant; or 
        \item $\ell=2$, $\bbphi$ is isotrivial and $\mathbf{X}$ is a non $\bbphi$-isotrivial curve that projects dominantly to each factor $\bP_1$. 
        \end{enumerate}
		Let $\{x_n\}_n\subset \mathcal{X}(\Qbar)$ be a $(\Phi,\mathcal{X})$-generic small sequence. 
		Then the Galois orbit of $\{x_n\}_n$ is equidistributed over $\mathcal{X}(\bC)$ with respect to the canonical measure $\mu_{\overline{L}_{\Phi}|\mathcal{X}}$. 
	\end{proposition}
	\begin{proof}
		Recalling Remark \ref{nondegenerate}, this an immediate consequence of our geometric Bogomolov results \ref{bogomolov hypothesis} and \ref{mainiso} coupled with \cite[Theorem 1.4.2]{Yuan:Zhang:new}.
	\end{proof}
    
  It is worth pointing out that the aforementioned equidistribution result \cite[Theorem 1.4.2]{Yuan:Zhang:new} yields a similar statement for each place $v\in M_{\mathcal{K}}$. We shall only need their result at the fixed archimedean place. K\"uhne has pointed out that his proof of \cite[Theorem 1]{kuhne:uml} adapts to this dynamical setting and it would be sufficient for us. 
  Gauthier \cite{Gauthier:equi} has also recently established a related equidistribution result. 

We conclude this section by observing the following simple but crucial for us consequence of Theorem \ref{splitineq} and (a very special case of) Proposition \ref{equidistribution}. 

\begin{proposition}\label{compactum}
Under the same assumptions as in Proposition \ref{equidistribution}, assume further that there exists a $(\Phi,\mathcal{X})$-generic small sequence. 
Let $F\subset B_0(\Qbar)$ be a finite set.  
Then there is a compact set $\mathcal{D}=\mathcal{D}(F)\subset B_0\setminus F$ such that 
$\mu_{\overline{L}_{\Phi}|\mathcal{X}}(\pi|_{\mathcal{X}}^{-1}(\mathcal{D}))>0.$ 
	\end{proposition}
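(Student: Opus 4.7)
The plan is as follows. I first aim to prove that $\mu_{\overline{L}_\Phi|\mathcal{X}}(\pi|_\mathcal{X}^{-1}(F))=0$, and then to extract the desired compact $\mathcal{D}$ from the open complement $B_0(\bC)\setminus F$ by a standard exhaustion argument together with continuity of the measure from below.

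To show the vanishing, I rely on the fact that, at our fixed archimedean place, the $\Phi$-invariant adelic metric on $\overline{L}_\Phi$ is continuous: by the Tate telescoping construction recalled in \S\ref{background}, it is a uniform limit of smooth semipositive model metrics. Consequently, $\overline{L}_\Phi|_\mathcal{X}$ is defined by locally bounded plurisubharmonic local potentials on $\mathcal{X}(\bC)$, and the canonical measure $\mu_{\overline{L}_\Phi|\mathcal{X}}=c_1(\overline{L}_\Phi|_\mathcal{X})^{\dim\mathcal{X}}$ agrees with the associated Bedford--Taylor Monge--Amp\`ere measure. The classical theorem in pluripotential theory then gives that Monge--Amp\`ere measures of locally bounded plurisubharmonic functions do not charge pluripolar sets, and in particular they vanish on every complex-analytic subvariety of positive codimension in $\mathcal{X}(\bC)$. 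Since under our standing hypothesis $\mathbf{X}$ is not $\bbphi$-preperiodic, $\mathcal{X}$ is irreducible of dimension $\geq 1$ and $\pi\colon\mathcal{X}\to B_0$ is a dominant morphism onto the smooth curve $B$; by Krull's Hauptidealsatz, every non-empty fiber $\pi|_\mathcal{X}^{-1}(t)$ with $t\in B_0(\bC)$ is a complex-analytic subvariety of codimension $1$ in $\mathcal{X}(\bC)$. Summing the resulting zeros over the finitely many $t\in F$ yields the claim.

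Combining the above with the fact that $\mu_{\overline{L}_\Phi|\mathcal{X}}$ has strictly positive total mass---which follows from the non-degeneracy of $\overline{L}_\Phi|_\mathcal{X}$ granted by Theorem~\ref{bogomolov hypothesis} together with Remark~\ref{nondegenerate}, and is moreover witnessed by Proposition~\ref{equidistribution} realizing $\mu_{\overline{L}_\Phi|\mathcal{X}}$ as the weak limit of the Galois-orbit probability measures of the given generic small sequence---we obtain $\mu_{\overline{L}_\Phi|\mathcal{X}}(\pi|_\mathcal{X}^{-1}(B_0(\bC)\setminus F))>0$. I then write $B_0(\bC)\setminus F$ as an ascending union of compacta $\mathcal{D}_1\subset\mathcal{D}_2\subset\cdots$ and apply continuity of measure from below to get $\mu_{\overline{L}_\Phi|\mathcal{X}}(\pi|_\mathcal{X}^{-1}(\mathcal{D}_n))\uparrow\mu_{\overline{L}_\Phi|\mathcal{X}}(\pi|_\mathcal{X}^{-1}(B_0(\bC)\setminus F))>0$, so any sufficiently large $\mathcal{D}_n$ serves as the required $\mathcal{D}$. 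The one point that truly needs care is the identification of the Yuan--Zhang canonical measure on the essentially quasiprojective pair $(\Phi,\mathcal{X})$ with the Bedford--Taylor Monge--Amp\`ere measure of its continuous potentials at the archimedean place; this, however, is built into the construction of \cite{Yuan:Zhang:new}, since the adelic metric is a uniform limit of smooth metrics and the Monge--Amp\`ere operator is continuous under such uniform convergence within the locally bounded category.
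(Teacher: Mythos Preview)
Your argument is correct and takes a genuinely different route from the paper. The paper proceeds arithmetically: it takes the generic small sequence $(t_n,z_n)$, invokes Theorem~\ref{splitineq} to bound $h(t_n)$ uniformly, then applies a lemma of Masser--Zannier \cite[Lemma~8.2]{Masser:Zannier:2} to show that a positive proportion of the Galois conjugates of $t_n$ land in some compact $\mathcal{D}\subset B_0\setminus F$, and finally passes this positive proportion to the limit measure via Proposition~\ref{equidistribution}. Your approach instead works directly with the limit measure: since $\widehat{T}_\Phi$ has continuous local potentials, the Bedford--Taylor Monge--Amp\`ere measure $\widehat{T}_\Phi^{\dim\mathcal{X}}\wedge[\mathcal{X}]$ puts no mass on proper analytic subvarieties of $\mathcal{X}$ (in particular on the finitely many fibers over $F$), while Theorem~\ref{bogomolov hypothesis} together with Remark~\ref{measures as currents} guarantees it has total mass $\hat h_{\bbphi}(\mathbf{X})>0$; a compact exhaustion of $B_0\setminus F$ then finishes.

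Your route is cleaner and avoids both the height inequality and the Masser--Zannier input; it also shows the hypothesis that a generic small sequence exists is not actually needed here. What the paper's approach buys is an explicit lower bound $a>0$ on the mass over $\mathcal{D}$ coming from the Galois-orbit count, and it keeps the argument within the arithmetic framework used throughout. One minor point worth making explicit in your write-up: to apply the Bedford--Taylor non-charging result on the possibly singular $\mathcal{X}$, pass to a resolution $\nu\colon\widetilde{\mathcal{X}}\to\mathcal{X}$, note that $\nu^*\widehat{T}_\Phi$ still has continuous potentials and that $\nu^{-1}(\pi|_\mathcal{X}^{-1}(F))$ is a proper analytic subset of the smooth $\widetilde{\mathcal{X}}$, hence pluripolar; then push forward.
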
	
		
	\begin{proof}
	Let $F\subset B_0(\bC)$ be a finite set. 
	Let  $(t_n,z_n)$ be a generic $(\Phi, \mathcal{X})$-small sequence. 
	By Theorems \ref{splitineq} and \ref{splitineqiso} we know that $\mathcal{X}^{\bbphi,\star}$ is Zariski dense in $\mathcal{X}$, so we may assume passing to a subsequence that $(t_n,z_n)\in \mathcal{X}^{\bbphi,\star}(\Qbar)$ for all $n\in\bN$. Let $h$ be a height on $B$ associated to an ample divisor. 
	Invoking Theorems \ref{splitineq} and \ref{splitineqiso} we infer that $h(t_n)\le M$ for a constant $M$ depending only on $(\bbphi, \mathbf{X})$. 
   This in turn prevents a positive proportion of Galois conjugates of $t_n$ from accumulating near $F$; see Masser-Zannier's \cite[Lemma 8.2]{Masser:Zannier:2}. 
    More precisely, there is a compact set $\mathcal{D}=\mathcal{D}(F,\bbphi,\mathbf{X})\subset B_0(\bC)\setminus F$ and a constant $a>0$ depending only on $\bbphi$, $\mathbf{X}$ and $F$ such that
		\begin{align}\label{many conjugates}
			\frac{1}{[\mathcal{K}(t_n):\mathcal{K}]}	\displaystyle\sum_{\sigma\in\Gal(\mathcal{K}(t_n)/\mathcal{K})}\chi_{\mathcal{D}}(\sigma(t_n))\ge a,
		\end{align}
	for all $n\in\bN$, where $\chi_{\mathcal{D}}$ denotes the charasteristic function of $\mathcal{D}$. 
	Now we can find a decreasing sequence of continuous non-negative  functions $\chi_{\mathcal{D},\epsilon}: B_0\times \mathbb{P}^{\ell}_1\to \bR_{\ge 0}$ for $\epsilon>0$ such that $\chi_{\mathcal{D},\epsilon}(Z)=\chi_{D}(\pi(Z))=1$, for $Z$ with $\pi(Z)\in \mathcal{D}$ and $\chi_{\mathcal{D},\epsilon}(Z)=0$ for $Z$ with $\pi(Z)$ away from a compact $\epsilon$-neighborhood of $\mathcal{D}$. In particular $\chi_{\mathcal{D},\epsilon}(Z)\ge \chi_{\mathcal{D}}(\pi(Z))$ for all $Z$. Applying Proposition \ref{equidistribution} with test function $\chi_{\mathcal{D},\epsilon}$ along our sequence of small points we infer
		\begin{align*}
			\frac{1}{[\mathcal{K}(t_n,z_n):\mathcal{K}]}	\displaystyle\sum_{\sigma\in\Gal(\mathcal{K}(t_n)/\mathcal{K})}\sum_{\rho\in \Gal(\mathcal{K}(t_n,z_n)/\mathcal{K}): \rho|\mathcal{K}(t_n)=\sigma }\chi_{\mathcal{D},\epsilon}(\sigma(t_n),\rho(z_n))\to \int \chi_{\mathcal{D},\epsilon} d\mu_{\overline{L}_{\Phi}|_{\mathcal{C}}}.
		\end{align*}
		The definition of $\chi_{D,\epsilon}$ along with inequality \eqref{many conjugates} then allow us to conclude that 
		$$ \int \chi_{\mathcal{D},\epsilon} d\mu_{\overline{L}_{\Phi}|_{\mathcal{C}}}\ge a,$$
		for each $\epsilon$. Letting $\epsilon\to 0$ the lemma follows by monotone convergence.
	\end{proof}

	%%%%%%%%%%%%%%%%%%%%%%%%%%%%%%%%%%%%%%%%%%%%%%%%
	%%%%%%%%%%%%%%%%%%%%%%%%%%%%%%%%%%%%%%%%%%%%%%%%%

\section{Instances of relative Bogomolov for split maps}\label{fighting the currents}

In this section we prove Theorem \ref{relative bogomolov} and subsequently deduce Theorem \ref{umb} as well as Corollary \ref{small curves}. 
We will need the notion of a fiber-wise Green current associated to a family of polarized endomorphisms, which we shall first introduce. For the basics on currents we refer the reader to \cite{DS}. Our presentation in terms of currents is inspired by the slick presentation in \cite{GV}.

Let $\Lambda$ be a smooth quasi-projective complex variety. Let $\pi: \mathcal{A}\to \Lambda$ be a family of irreducible complex projective varieties of dimension $k\ge 1$, so that $\mathcal{A}_{\la}=\pi^{-1}(\{\la\})$ is an irreducible complex variety of dimension $k$ for each $\la\in \Lambda$. 
Let $f:\mathcal{A}\to \mathcal{A}$ be complex family of endomorphisms, i.e. $f$ is analytic and for each $\lambda\in \Lambda$ the map  $f_{\la}:\mathcal{A}_{\la}\to \mathcal{A}_{\la}$ is a morphism. Assume that $f^{*}\mathcal{L}\iso \mathcal{L}^{\otimes d}$ for a line bundle $\mathcal{L}$ on $\mathcal{A}$, inducing an isomorphism $f_{\la}^{*}\mathcal{L}_{\la}\iso\mathcal{L}^{\otimes d}_{\la}$ for each $\la\in \Lambda$. Here $\mathcal{L}_{\la}=\mathcal{L}|_{\mathcal{A}_{\la}}$. 
We encode the above data by $(\Lambda, \mathcal{A},f,\mathcal{L})$. 
We let $\widehat{\omega}$ be a continuous positive $(1,1)$-form on $\mathcal{A}$ cohomologous to a multiple of $\mathcal{L}$ such that $\omega_{\la}:=\widehat{\omega}|_{\mathcal{A}_{\la}}$ is a K\"ahler form on $(\mathcal{A}_{\la})_{\mathrm{reg}}$. 
We assume that $\widehat{\omega}$ is \textbf{normalized} so that 
\begin{align}\label{normalize omega}
\int_{\mathcal{A}_{\la}}\omega^k_{\la}=1,
\end{align}
for each $\la\in\Lambda$. 
The sequence $d^{-n}(f^n)^{*}(\widehat{\omega})$ converges in the sense of currents to a closed positive $(1,1)$-current $\widehat{T}_{f}$ with continuous potentials on $\mathcal{A}$. 
We refer to $\widehat{T}_f$ as the \emph{fiber-wise Green current} of $f$.  
Moreover, for all $\la\in \Lambda$ the slice $\widehat{T}_f|_{\mathcal{A}_{\la}}$ is well-defined and we have $f^{*}_{\la}\widehat{T}_f|_{\mathcal{A}_{\la}}=d\widehat{T}_f|_{\mathcal{A}_{\la}}$. For these facts we refer the reader to  \cite[Proposition 9]{GV}. 

	\begin{remark}\label{measures as currents}
		In the setting of Proposition \ref{equidistribution}, we have  $\mu_{\overline{L}_{\Phi}|\mathcal{X}}= \frac{1}{v} \widehat{T}^{\wedge \dim \mathcal{X}}_{\Phi}\wedge [\mathcal{X}]$ for a constant $v>0$. This transpires from the construction \cite[\S 3.6.6]{Yuan:Zhang:new} of $\mu_{\overline{L}_{\Phi}|\mathcal{X}}$ via a limiting process. By \cite[Theorem B]{GV} we know that $v=\hat{h}_{\bbphi}(\mathbf{X})$ and $\mu_{\overline{L}_{\Phi}|\mathcal{X}}$ is a probability measure; see also \cite[Lemma 5.4.4]{Yuan:Zhang:new}. This latter fact is not used in our proof of Theorem \ref{relative bogomolov}.
	\end{remark}

{\sf In what follows we work in the setting of Theorem \ref{relative bogomolov}.}
So $\bbphi=(\bbf_1,\bbf_2):\bP_1^2\to \bP_1^2$ and $\bbpsi: \bP_1^{\ell}\to \bP_1^{\ell}$ for $\ell\ge 1$ are isotrivial-free split polarizable endomorphisms both of polarization degree $d\ge 2$, $\bc \subset\bP_1^2$ is an irreducible curve that is not weakly $\bbphi$-special and $\mathbf{X}\subset\bP_1^{\ell}$ is an irreducible subvariety that is not $\bbpsi$-preperiodic all defined over $K$. These induce endomorphisms of quasiprojective varieties $\Phi: B_0\times \bP_1^2\to B_0\times \bP_1^2$, $f_i:B_0\times \bP_1\to B_0\times \bP_1$ for $i=1,2$ and $\Psi: B_0\times \bP_1^{\ell}\to B_0\times \bP_1^{\ell}$, as well as subvarieties $\mathcal{C}\subset B_0\times \bP_1^2$ and $\mathcal{X}\subset B_0\times \bP_1^{\ell}$ defined over $\Qbar$. 
For $i=1,2$ we write 
\begin{align*}
f_i\times_{B_0}\Psi: B_0\times \bP_1^{\ell+1}&\to  B_0\times \bP_1^{\ell+1}\\
(t,z_1,z_2\ldots,z_{\ell+1})&\mapsto (t, f_{i,t}(z_1),\Psi_t(z_2,\ldots,z_{\ell+1})).
\end{align*}
Finally, we fix several projection maps as in the following diagram. 
	\begin{equation}\label{diagram}
		\begin{tikzcd}
			\mathcal{C}\times_{B_0} \mathcal{X}\arrow{d}{\pi_{\mathcal{C}}}  \arrow[r, yshift=0.7ex, "\tilde{\pi}_1"] \arrow[r, yshift=-0.7ex, swap , "\tilde{\pi}_2"] & \bP_1\times \mathcal{X} \arrow{d}{p}\arrow{r}{q} & \mathcal{X}  \arrow{d}{q_B}\\
			\mathcal{C}  \arrow[r, yshift=0.7ex, "\pi_1"] \arrow[r, yshift=-0.7ex, swap, "\pi_2"]	& B_0\times \bP_1 \arrow{r}[swap]{p_B}& B_0
		\end{tikzcd}
	\end{equation}
\noindent Here $\pi_i$ and $\tilde{\pi}_i$ denote the projections to the $i$-th copy of $\bP_1$ for $i=1,2$. 

Our first step to proving Theorem \ref{relative bogomolov} is to apply the equidistribution result in Proposition \ref{equidistribution} (in fact $3$ times) and deduce the following equality of measures. 

\begin{lemma}\label{equidistributiontwice}
Under the assumptions in Theorem \ref{relative bogomolov} assume further that there is a $(\Phi\times_{B_0}\Psi, \mathcal{C}\times_{B_0}\mathcal{X})$-generic small sequence. 
Then there exists $\al\in \bR_{>0}$ such that 
\begin{align}\label{push and pull}
\pi_1^{*}p_{*}(\widehat{T}^{\wedge (1+\dim \mathcal{X})}_{f_1\times _{B_0}\Psi}\wedge [\bP_1\times\mathcal{X}]) =\al  \pi_2^{*}p_{*}(\widehat{T}^{\wedge (1+\dim \mathcal{X})}_{f_2\times _{B_0}\Psi}\wedge [\bP_1\times\mathcal{X}]).
\end{align}
\end{lemma}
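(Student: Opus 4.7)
The plan is to apply Proposition~\ref{equidistribution} three times, to the hypothesized generic small sequence $\{(P_n,Q_n)\}\subset(\mathcal{C}\times_{B_0}\mathcal{X})(\Qbar)$ and to two of its natural projections, then reconcile the resulting limit measures via the commutativity of diagram~\eqref{diagram}. Throughout, set $\mu_i:=\widehat{T}^{\wedge(1+\dim\mathcal{X})}_{f_i\times_{B_0}\Psi}\wedge[\bP_1\times\mathcal{X}]$.

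First I would verify the hypotheses of Proposition~\ref{equidistribution} in each of the three relevant settings: $(\Phi\times_{B_0}\Psi,\mathcal{C}\times_{B_0}\mathcal{X})$, $(f_i\times_{B_0}\Psi,\bP_1\times\mathcal{X})$ for $i=1,2$, and $(\Phi,\mathcal{C})$. All three split polarized endomorphisms are isotrivial-free since $\bbphi$ and $\bbpsi$ are, and the relevant subvarieties are non-preperiodic (using that $\bc$ is not weakly $\bbphi$-special, hence not $\bbphi$-preperiodic, and that $\mathbf{X}$ is not $\bbpsi$-preperiodic). The projected sequences $\{\tilde{\pi}_i(P_n,Q_n)\}$ and $\{P_n\}=\{\pi_\mathcal{C}(P_n,Q_n)\}$ remain generic small: smallness is immediate from additivity of the fiberwise canonical height $\hat{h}_{\Phi\times_{B_0}\Psi}=\hat{h}_\Phi+\hat{h}_\Psi$, while genericity follows from generic finiteness of $\tilde{\pi}_i$ and $\pi_\mathcal{C}$, which holds since $\bc$ projects dominantly to both factors of $\bP_1$ (as it is not weakly special).

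Second, three applications of Proposition~\ref{equidistribution} equidistribute the corresponding Galois orbits to the canonical probability measures $\nu_0\propto\widehat{T}^{\wedge(1+\dim\mathcal{X})}_{\Phi\times_{B_0}\Psi}\wedge[\mathcal{C}\times_{B_0}\mathcal{X}]$ on $\mathcal{C}\times_{B_0}\mathcal{X}$, $\nu_i=\mu_i/v_i$ on $\bP_1\times\mathcal{X}$, and $\mu_\Phi\propto\widehat{T}_\Phi^{\wedge 2}\wedge[\mathcal{C}]$ (all with positive normalizing constants by Remark~\ref{measures as currents}). Since push-forward commutes with weak-$*$ limits, $\tilde{\pi}_{i,*}\nu_0=\nu_i$ and $\pi_{\mathcal{C},*}\nu_0=\mu_\Phi$. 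Combining these with the commutativity $p\circ\tilde{\pi}_i=\pi_i\circ\pi_\mathcal{C}$ from \eqref{diagram} yields on $B_0\times\bP_1$
\[
\tfrac{1}{v_i}p_*\mu_i=p_*\nu_i=(p\circ\tilde{\pi}_i)_*\nu_0=(\pi_i\circ\pi_\mathcal{C})_*\nu_0=\pi_{i,*}\mu_\Phi,
\]
so $p_*\mu_i=v_i\,\pi_{i,*}\mu_\Phi$ for $i=1,2$.

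Finally I pull this identity back to $\mathcal{C}$ via $\pi_i$, working with the Bedford--Taylor pull-back of wedge products of currents with continuous potentials. Expanding $p_*\mu_i$ via the splitting $\widehat{T}_{f_i\times_{B_0}\Psi}=p^*\widehat{T}_{f_i}+q^*\widehat{T}_\Psi$ and the projection formula produces the decomposition
\[
p_*\mu_i=(k+1)\,\widehat{T}_{f_i}\wedge p_B^*\omega+\tbinom{k+1}{2}D\cdot\widehat{T}_{f_i}^{\wedge 2}
\]
on $B_0\times\bP_1$, where $k=\dim\mathcal{X}$, $D>0$ is a constant equal to the polarization degree of $\mathbf{X}$, and $\omega=(q_B)_*(\widehat{T}_\Psi^{\wedge k}|_\mathcal{X})$ is a $(1,1)$-current on $B_0$; crucially, $D$ and $\omega$ are independent of $i$. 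Pulling back via $\pi_i$ thus localizes all $i$-dependence into $\pi_i^*\widehat{T}_{f_i}$, and matching the two sides with $v_i\,\pi_i^*\pi_{i,*}\mu_\Phi$ yields the asserted proportionality with $\alpha$ determined by $(v_1\deg\pi_1)/(v_2\deg\pi_2)$. I expect the main obstacle to be the careful bookkeeping of these push-forward/pull-back manipulations for currents that are only continuous (not smooth), ensuring throughout that the Bedford--Taylor wedge products and projection formulas are justified, so that the resulting identity of currents on $\mathcal{C}$ genuinely captures the equidistribution-derived identity above.
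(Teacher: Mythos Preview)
Your approach is essentially the same as the paper's: apply Proposition~\ref{equidistribution} to the full sequence and to its projections $\tilde{\pi}_i$, use the commutativity $p\circ\tilde{\pi}_i=\pi_i\circ\pi_{\mathcal{C}}$ to relate the pushforwards, and then pull back by the finite maps $\pi_i$ to conclude. The paper's execution is cleaner in two respects. First, it does not apply equidistribution to $(\Phi,\mathcal{C})$; it only needs that $\pi_{\mathcal{C}*}\nu_0$ is some fixed measure independent of $i$, not that it equals $\mu_{\overline{L}_\Phi|\mathcal{C}}$. Second, and more importantly, your entire ``Finally'' paragraph with the expansion of $p_*\mu_i$ is unnecessary for this lemma: once you have $p_*\mu_i=v_i\,\pi_{i*}(\pi_{\mathcal{C}*}\nu_0)$, pulling back by $\pi_i$ and using $\pi_i^*\pi_{i*}=\deg(\pi_i)\cdot\mathrm{id}$ (finiteness of $\pi_i$) immediately gives $\pi_i^*p_*\mu_i=v_i\deg(\pi_i)\,\pi_{\mathcal{C}*}\nu_0$, hence the proportionality with $\alpha=\frac{v_1\deg(\pi_1)}{v_2\deg(\pi_2)}$. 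The splitting $\widehat{T}_{f_i\times_{B_0}\Psi}=p^*\widehat{T}_{f_i}+q^*\widehat{T}_\Psi$ and the attendant Bedford--Taylor bookkeeping you worry about belong to the \emph{next} lemma (Lemma~\ref{blocki}), not here. (Minor point: your justification of genericity via ``generic finiteness of $\pi_{\mathcal{C}}$'' is off, since $\pi_{\mathcal{C}}$ has fibers $\mathcal{X}_t$; genericity is preserved simply because $\pi_{\mathcal{C}}$ and $\tilde{\pi}_i$ are dominant.)
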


\begin{proof}
	Let $z:=\{z_n\}_{n\in\bN}$ be the generic small sequence as in the statement. We will trace its journey through the diagram \eqref{diagram}. 
		Since $\bc\times \mathbf{X}$ is not $(\bbphi,\bbpsi)$-preperiodic, 
Proposition \ref{equidistribution} (see also Remark \ref{measures as currents}) yields that the Galois orbits of $z$ equidistribute with respect to 
		$$P:=\frac{1}{v}\widehat{T}^{\wedge (1+\dim \mathcal{X})}_{\Phi\times _{B_0}\Psi}\wedge [\mathcal{C}\times_{B_0}\mathcal{X}],$$
		where $v\in\bR_{>0}$.
Moreover, for each $i=1,2$ the sequence $\tilde{\pi}_i(z):=\{\tilde{\pi}_i(z_n)\}_n$ is $(f_i\times_{B_0}\Psi, \bP_1\times \mathcal{X})$-generic and small. 
Since $\bP_1\times \mathbf{X}$ is not $(\bbf_i,\bbpsi)$-preperiodic, Proposition \ref{equidistribution} yields that its Galois conjugates equidistribute with respect to  $\frac{1}{v_i}\widehat{T}^{\wedge (1+\dim \mathcal{X})}_{f_i\times _{B_0}\Psi}\wedge [\bP_1\times\mathcal{X}]$ for $v_i>0$. 	We infer 
		\begin{align}\label{Pproj}
			\tilde{\pi}_{i*}P=\frac{1}{v_i}\widehat{T}^{\wedge (1+\dim \mathcal{X})}_{f_i\times _{B_0}\Psi}\wedge [\bP_1\times\mathcal{X}],
		\end{align}
		for each $i=1,2$. 
		The aforementioned equidistribution statements further imply that the Galois conjugates of 
		$p\circ \tilde{\pi}_i(z)= \pi_i\circ\pi_{\mathcal{C}}(z)$ equidistribute with respect to 
		\begin{align}\label{same projections}
			p_{*}\tilde{\pi}_{i*}P= \pi_{i*}\pi_{\mathcal{C}*} P.
		\end{align}
Since $\pi_i$ is a finite map, we have $\pi_i^{*} \pi_{i*}\pi_{\mathcal{C}*} P=\deg(\pi_i)\pi_{\mathcal{C}*} P$. 
Recalling \eqref{Pproj} and pulling back \eqref{same projections}  by $\pi_i$ we thus infer 
\begin{align*}
\frac{\deg(\pi_2)}{v_1}\pi_1^{*}p_{*}(\widehat{T}^{\wedge (1+\dim \mathcal{X})}_{f_1\times _{B_0}\Psi}\wedge [\bP_1\times\mathcal{X}]) = \frac{\deg(\pi_1)}{v_2}\pi_2^{*}p_{*}(\widehat{T}^{\wedge (1+\dim \mathcal{X})}_{f_2\times _{B_0}\Psi}\wedge [\bP_1\times\mathcal{X}]).
\end{align*}
The lemma follows.
\end{proof}

Our next goal is to express the measures in Lemma \ref{equidistributiontwice} fiberwise. First, we will reinterpret their definition so that ultimately the characterization of slicing of currents applies.
Our end goal is to infer that \eqref{push and pull} contradicts Proposition \ref{test function} due to our assumption that $\bc$ is not weakly $\bbphi$-special. We first record a couple of lemmata. 

\begin{lemma}\label{blocki}
            With notation as in diagram \eqref{diagram}, there is a constant $\kappa_i>0$ such that $\widehat{T}^{\wedge (1+\dim \mathcal{X})}_{f_i\times _{B_0}\Psi}\wedge [\bP_1\times\mathcal{X}]=\kappa_i ( p^{*}\widehat{T}_{f_i}\wedge q^{*}(\widehat{T}_{\Psi}^{\wedge(\dim\mathcal{X})}\wedge [\mathcal{X}]))$, for  $i=1,2$.
		\end{lemma}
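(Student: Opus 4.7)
The plan is to exploit the split structure of $f_i\times_{B_0}\Psi$ at the level of Green currents, expand via the binomial theorem, and then eliminate all but one term using bidegree and fiberwise slicing arguments. First, I would establish the decomposition
\begin{equation*}
\widehat{T}_{f_i \times_{B_0} \Psi} = A + B, \qquad A := p_1^* \widehat{T}_{f_i}, \qquad B := p_2^* \widehat{T}_{\Psi},
\end{equation*}
on the ambient $B_0 \times \bP_1 \times \bP_1^{\ell}$, where $p_1, p_2$ denote the natural projections to $B_0 \times \bP_1$ and $B_0 \times \bP_1^{\ell}$ respectively. This follows from the construction of $\widehat{T}$ as a normalized limit of iterated pullbacks: one may choose a reference positive form $\widehat{\omega} = p_1^* \omega_1 + p_2^* \omega_{\Psi}$, and the equivariances $p_1\circ(f_i\times_{B_0}\Psi)=f_i\circ p_1$ and $p_2\circ(f_i\times_{B_0}\Psi)=\Psi\circ p_2$ make the normalized iterated pullbacks $d^{-n}(f_i\times_{B_0}\Psi)^{n*}\widehat{\omega}$ split factor-by-factor, converging to $A+B$.

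Next, I would binomially expand $(A+B)^{\wedge(1+\dim\mathcal{X})}$ and wedge with $[\bP_1\times\mathcal{X}]=p_2^*[\mathcal{X}]$ (using $p_2^{-1}(\mathcal{X})=\bP_1\times\mathcal{X}$), then eliminate most terms by bidegree. Since $B^{\wedge m}\wedge[\bP_1\times\mathcal{X}]=p_2^*(\widehat{T}_{\Psi}^{\wedge m}\wedge[\mathcal{X}])$ and $\widehat{T}_{\Psi}^{\wedge m}\wedge[\mathcal{X}]$ has bidegree exceeding $\dim(B_0\times\bP_1^{\ell})=\ell+1$ whenever $m\ge 1+\dim\mathcal{X}$, the $k=0$ term vanishes; similarly $A^{\wedge k}=p_1^*(\widehat{T}_{f_i}^{\wedge k})=0$ for $k\ge 3$, since $\widehat{T}_{f_i}^{\wedge k}=0$ on the $2$-dimensional space $B_0\times\bP_1$. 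Only the terms $k\in\{1,2\}$ in the expansion may contribute.

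The main obstacle is showing that the $k=2$ term vanishes, namely
\begin{equation*}
A^{\wedge 2}\wedge B^{\wedge(\dim\mathcal{X}-1)}\wedge[\bP_1\times\mathcal{X}]=0.
\end{equation*}
I would prove this by slicing over $B_0$. For a generic $t\in B_0$, the slice of $A$ along the fiber of the projection to $B_0$ equals $\pi_1^*T_{f_{i,t}}$ viewed on the fiber $\bP_1\times\bP_1^{\ell}$, where $\pi_1$ denotes the projection to the first factor and $T_{f_{i,t}}=\mu_{f_{i,t}}$. Since $\mu_{f_{i,t}}$ is already a top-degree current on the $1$-dimensional space $\bP_1$, we have $(\pi_1^*T_{f_{i,t}})^{\wedge 2}=\pi_1^*(\mu_{f_{i,t}}^{\wedge 2})=0$. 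By the Bedford--Taylor compatibility of slicing with wedge products (applicable because $\widehat{T}_{f_i}$ has continuous potential), the slice of $A^{\wedge 2}$ over every generic $t$ vanishes, hence so does the slice of $A^{\wedge 2}\wedge B^{\wedge(\dim\mathcal{X}-1)}\wedge[\bP_1\times\mathcal{X}]$; disintegrating this positive top-dimensional measure along the projection to $B_0$ then forces it to vanish globally. The subtlety is that $\widehat{T}_{f_i}^{\wedge 2}$ itself need not vanish on $B_0\times\bP_1$ (it behaves like a bifurcation measure), but what matters is the vanishing of its disintegration along the projection to $B_0$, which combined with the transverse cycle $[\bP_1\times\mathcal{X}]$ suffices.

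Only the $k=1$ term then survives, giving
\begin{equation*}
(A+B)^{\wedge(1+\dim\mathcal{X})}\wedge[\bP_1\times\mathcal{X}]=(1+\dim\mathcal{X})\,A\wedge B^{\wedge\dim\mathcal{X}}\wedge[\bP_1\times\mathcal{X}].
\end{equation*}
The right-hand side is identified with the claimed expression by writing $B^{\wedge\dim\mathcal{X}}\wedge[\bP_1\times\mathcal{X}]=p_2^*(\widehat{T}_{\Psi}^{\wedge\dim\mathcal{X}}\wedge[\mathcal{X}])$, which upon restriction to $\bP_1\times\mathcal{X}$ agrees with $q^*(\widehat{T}_{\Psi}^{\wedge\dim\mathcal{X}}\wedge[\mathcal{X}])$ since $p_2|_{\bP_1\times\mathcal{X}}=q$, and $A|_{\bP_1\times\mathcal{X}}=p^*\widehat{T}_{f_i}$. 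This yields the desired identity with $\kappa_i=1+\dim\mathcal{X}>0$.
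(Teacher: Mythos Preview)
Your overall architecture---split the Green current as $A+B$, expand $(A+B)^{\wedge(1+\dim\mathcal{X})}$ binomially, and kill all terms except $k=1$---is exactly what the paper does. The treatments of $k=0$ and $k\ge 3$ also agree. The one substantive difference, and the place where your argument is shakier than the paper's, is the $k=2$ term.

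The paper dispatches $k=2$ in one line: since $\bP_1$ is $\bbf_i$-preperiodic we have $\hat{h}_{\bbf_i}(\bP_1)=0$, and by \cite[Theorem B]{GV} (or \cite[Lemma 5.22]{Yuan:Zhang:new}) the total mass of the positive measure $\widehat{T}_{f_i}^{\wedge 2}$ on $B_0\times\bP_1$ equals a positive multiple of $\hat{h}_{\bbf_i}(\bP_1)$, hence $\widehat{T}_{f_i}^{\wedge 2}=0$ identically. Consequently $A^{\wedge 2}=p_1^*(\widehat{T}_{f_i}^{\wedge 2})=0$ and the $k=2$ term is gone. Your parenthetical claim that ``$\widehat{T}_{f_i}^{\wedge 2}$ itself need not vanish on $B_0\times\bP_1$ (it behaves like a bifurcation measure)'' is therefore \emph{incorrect} here: it does vanish, and this is by far the cleanest route. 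Your alternative slicing/disintegration argument is not wrong in spirit, but as written it conflates two different notions. Knowing that the pluripotential slice $(A^{\wedge 2})_t=0$ for every $t$ tells you that $A^{\wedge 2}\wedge\pi^*\omega=0$ for every smooth $(1,1)$-form $\omega$ on $B_0$ (via the slicing formula); it does \emph{not} automatically give $A^{\wedge 2}\wedge B^{\wedge(\dim\mathcal{X}-1)}\wedge[\bP_1\times\mathcal{X}]=0$, since that second factor is not of the form $\pi^*\omega$. To close this gap you would either need to reduce to that form or---much more simply---invoke the height/mass equality as the paper does.

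A minor point: the paper's decomposition carries a normalization constant $c>0$, so $\kappa_i=c^{\dim\mathcal{X}+1}(1+\dim\mathcal{X})$ rather than your $\kappa_i=1+\dim\mathcal{X}$. This does not affect the statement, which only asserts existence of some $\kappa_i>0$.
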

		\begin{proof}
We fix projections $\Pi_1: 	(B_0\times\bP_1)\times_{B_0} (B_0\times\bP_1^{\ell})\to B_0\times \bP_1$ and 
$\Pi_2: (B_0\times\bP_1)\times_{B_0} (B_0\times\bP_1^{\ell})\to B_0\times \bP^{\ell}_1$. 
The map $f_i\times_{B_0}\Phi$ is polarized with respect to $\mathcal{L}=\Pi_1^{*}\mathcal{L}_1\otimes \Pi_2^{*}\mathcal{L}_2$, 
where $\mathcal{L}_1$ is the line bundle associated to the divisor $B_0\times \{\infty\}$ on $B_0\times\bP_1$ and $\mathcal{L}_2$ the line bundle associated to $B_0\times ( \{\infty\}\times\bP_1\cdots\times\bP_1+\cdots+\bP_1\times\cdots\times\mathbb{P}_1\times\{\infty\})$ on $B_0\times \bP_1^{\ell}$. 
Let $\widehat{\omega}_i$ be a continuous positive $(1,1)$-form cohomologous to a multiple of $\mathcal{L}_i$ and normalized as in \eqref{normalize omega}. 
Then $r_1\Pi_1^{*}\widehat{\omega}_1+ r_2\Pi_2^{*}\widehat{\omega}_2$ is cohomologous to $\mathcal{L}$ for constants $r_1,r_2\in\bQ_{>0}$. By the limiting construction of the fiber-wise Green current as in e.g. \cite[Proposition 9]{GV} we infer 
\begin{align}
\widehat{T}_{f_i\times _{B_0}\Psi}=r_1\Pi_1^{*}\widehat{T}_{f_i} + r_2\Pi^{*}_2\widehat{T}_{\Psi},
\end{align}
adjusting the constants $r_1,r_2>0$ if necessary to ensure that both sides are normalized as in \eqref{normalize omega}, 
so that 
\begin{align}\label{split the currents}
\widehat{T}_{f_i\times _{B_0}\Psi}\wedge [\bP_1\times\mathcal{X}]= r_1p^*\widehat{T}_{f_i}+r_2q^{*}(\widehat{T}_{\Psi}\wedge[\mathcal{X}]).
\end{align}
Notice now that since $B_0\times \bP^1$ is $2$-dimensional we have $(p^{*}\widehat{T}_{f_i})^{\wedge k}=p^*(\widehat{T}_{f_i})^{\wedge k}=0$ (in the sense of currents) for $k\ge 3$ and similarly looking at the dimensions we have $q^{*}(\widehat{T}_{\Psi}\wedge[\mathcal{X}])^{\wedge \dim\mathcal{X}+1}= 0$. 
Moreover  
$(p^{*}\widehat{T}_{f_i})^{\wedge 2}= 0$, by the limiting definition of $\widehat{T}_{f_i}$ since $\widehat{\omega}^{\wedge 2}=\pi_{\mathbb{P}_1}^{*}\omega_{FS}^{\wedge 2}=0$, for the projection $\pi_{\mathbb{P}_1}:B_0\times\bP_1\to \bP_1$ and the Fubini-Study form $\omega_{FS}$ on $\bP_1$.
Hence passing to the $(\dim\mathcal{X}+1)$-th power in \eqref{split the currents} we get 
\begin{align}
\widehat{T}^{\wedge (1+\dim \mathcal{X})}_{f_i\times _{B_0}\Psi}\wedge [\bP_1\times\mathcal{X}]=r_1r_2^{\dim\mathcal{X}}(\dim\mathcal{X}+1)( p^{*}\widehat{T}_{f_i}\wedge q^{*}(\widehat{T}_{\Psi}^{\wedge(\dim\mathcal{X})}\wedge [\mathcal{X}])).
\end{align}
The lemma follows. 
\end{proof}

Next we record the following base change formula for currents, which is well-known but we have not found a reference.

\begin{lemma}\label{base change}
Let 
         \begin{equation}
		\begin{tikzcd}
			X'\arrow{d}{f'}  \arrow[r, yshift=0.7ex, "h"]  & X \arrow{d}{f}\\
			Y' \arrow[r, yshift=0.7ex, "g"] & Y
		\end{tikzcd}
	\end{equation}
be a Cartesian diagram of complex varieties, where $f$ and $f^{\prime}$ are proper, $h$ and $g$ are submersions. For every current $T$ on $X$ with compact support, we have 
$$g^*f_*T = f_*^{'}h^*T.$$
\end{lemma}

\begin{proof}
 This follows from the classical base change formula for smooth forms on $Y'$. Namely $ f^{*}g_{*} \omega=h_{*} f^{\prime*} \omega$, where $h_{*} \omega'=\int_{h} \omega'$ denotes the fiber integral. Recall that $h^{*} T(\eta)=T\left(h_{*} \eta\right)$ and $f'_*T(\eta) = T(f^{'*}\eta)$ for a smooth form $\eta$. %for every $\eta \in A_{c}^{\operatorname{dim} X-p, \operatorname{dim} X-q}(Y)$ and $h^{*} T(\eta)=T\left(h_{*} \eta\right)$ for every $\eta \in A_{c}^{\operatorname{dim} X^{\prime}-p, \operatorname{dim} X^{\prime}-q}\left(X^{\prime}\right)$.
\end{proof}
In what follows we are going to view all projection maps as maps from products of the projective line with $B_0$ to products of the projective line with $B_0$. We may assume that $B_0$ is smooth after possibly taking out some points. This allows us to apply Lemma \ref{base change} to deduce the following. 
%\newmm{We record here the following consequence of Lemma \ref{base change}.}
\begin{remark}\label{around diagram}
With notation as in diagram \ref{diagram} we have 
	\begin{align*}
		p_{*}q^{*}(\widehat{T}_{\Psi}^{\wedge(\dim\mathcal{X})}\wedge [\mathcal{X}])= p^{*}_Bq_{B*}(\widehat{T}_{\Psi}^{\wedge(\dim\mathcal{X})}\wedge [\mathcal{X}]). 
		\end{align*}
\end{remark}
\iffalse
\begin{proof}
For convenience, we use notation as in \cite{DS} for the following calculation. 
Chasing the diagram \ref{diagram} we observe that if $g_1:B_0\to\bR$ and $g_2:\bP_1\to\bR$ are smooth and compactly supported functions and we let $g_2':B_0\times\bP_1\to \bR$ be given by $g'_2(t,z)=g_2(z)$, then
\begin{align}\label{chase split}
\begin{split}
\<p_{*}q^{*}(\widehat{T}_{\Psi}^{\wedge(\dim\mathcal{X})}\wedge [\mathcal{X}]),(g_1\circ p_B) g'_2\>&=\<q^{*}(\widehat{T}_{\Psi}^{\wedge(\dim\mathcal{X})}\wedge [\mathcal{X}]),(g_1\circ p_B\circ p) (g'_2\circ p)\>\\
&=\<\widehat{T}_{\Psi}^{\wedge(\dim\mathcal{X})}\wedge[\mathcal{X}], g_1\circ q_B\>\<[\bP_1], g_2\>\\
&=\<p^{*}_Bq_{B*}(\widehat{T}_{\Psi}^{\wedge(\dim\mathcal{X})}\wedge [\mathcal{X}]),(g_1\circ p_B) g'_2\>.
\end{split}
\end{align}
Since by \cite[Proposition 2.2.5]{DS:course} the space of functions generated by $g_1g_2$ is dense in the space of smooth and compactly supported functions $B_0\times \bP_1\to \bR$, the lemma follows. 
	\end{proof}
\fi    
 \noindent Finally, for the convenience of the reader, we recall  here the following typical  approximation lemma that is already implicit in the construction of the dynamical currents used above. A reference is \cite[Theorem 3.3.3]{DS:course}. 
\begin{remark}\label{approxT}
    There is a sequence of smooth currents $T_n$ with compact support such that $T_n$ converges  to $\hat{T}^{\wedge \dim(\mathcal{X})}_{\Psi}\wedge[\mathcal{X}]$. 
\end{remark}

  \noindent  We are now ready to prove Theorem \ref{relative bogomolov}. 
    
   \smallskip 
    
 \noindent\emph{Proof of Theorem \ref{relative bogomolov}:}
 Under the assumptions in Theorem \ref{relative bogomolov}, assume that there is a $(\Phi\times_{B_0}\Psi,\mathcal{C}\times_{B_0}\mathcal{X})$-generic small sequence to end in a contradiction. 
 Combining Lemmata \ref{equidistributiontwice} and \ref{blocki} and the observation in Remark \ref{around diagram} and using the projection formula for currents we infer  that 
 \begin{align}\label{alp}
 \pi_1^*\widehat{T}_{f_1}\wedge \pi_{1}^*p^{*}_B(q_{B*}(\widehat{T}_{\Psi}^{\wedge(\dim\mathcal{X})}\wedge [\mathcal{X}])) = \beta \pi_2^*\widehat{T}_{f_2}\wedge \pi_{2}^*p^{*}_B(q_{B*}(\widehat{T}_{\Psi}^{\wedge(\dim\mathcal{X})}\wedge [\mathcal{X}])),
 \end{align}
 for $\beta\in \bR_{>0}$. 
 To simplify our notation we let $\Pi_{\mathcal{C}}:=p_B\circ\pi_i:\mathcal{C}\to B_0$ and write 
 \begin{align}\label{R} 
  R := q_{B*}(\widehat{T}_{\Psi}^{\wedge(\dim\mathcal{X})}\wedge [\mathcal{X}]).
  \end{align} 
In this notation equation \eqref{alp} reads as
\begin{align}\label{alpha}
   \pi_1^*\widehat{T}_{f_1}\wedge\Pi^{*}_{\mathcal{C}}R=\beta   \pi_2^*\widehat{T}_{f_2}\wedge\Pi^{*}_{\mathcal{C}}R.
\end{align}

 Let $E_{\bbphi, \bc}$ be the finite set as in Definition \ref{finite set}. 
 	Since $\mathbf{X}$ is not $\bbpsi$-preperiodic, by Proposition \ref{compactum} we can find a compact set $\mathcal{D}\subset B_0\setminus E_{\bbphi, \bc}$ such that 
 	\begin{align}\label{positive base}
 		\int_{\mathcal{D}}R>0. 
 	\end{align}
 We will first show that $\beta=\deg(\pi_1)/\deg(\pi_2)$. 
 Let $\chi_{\mathcal{D}}:B_0\to \bR$ be a non-negative smooth function which is 1 on $\mathcal{D}$ and 0 outside some compact set $\mathcal{D}^{\prime}$ with $\mathcal{D} \subseteq \mathcal{D}^{\prime} \subseteq B_{0} \backslash E_{\boldsymbol{\Phi}, \mathbf{C}}$. Then \eqref{alpha} yields 
 \begin{align}\label{find alpha}
 \int_{\mathcal{C}}\chi_{\mathcal{D}}\circ\Pi_{\mathcal{C}}\wedge \pi_1^*\widehat{T}_{f_1}\wedge \Pi_{\mathcal{C}}^*R =\beta \int_{\mathcal{C}}\chi_{\mathcal{D}}\circ\Pi_{\mathcal{C}}\wedge \pi_2^*\widehat{T}_{f_2}\wedge \Pi_{\mathcal{C}}^*R.
 \end{align}
 Let us pretend for a moment that $R$ is smooth. 
 Then by the characterization of slicing as in \cite[Proposition 4.3]{BB} equation \eqref{find alpha} would yield
 \begin{align}\label{slice pretend}
 \int_{B_0}\left(\chi_{\mathcal{D}}(t)\int_{\mathcal{C}_t} \pi_{1,t}^*T_{f_{1,t}}\right)R =\beta \int_{B_0}\left(\chi_{\mathcal{D}}(t)\int_{\mathcal{C}_t} \pi_{2,t}^*T_{f_{2,t}}\right)R,
 \end{align}
 where $T_{f_{i,t}}$ denotes the slice of $\widehat{T}_{f_{i}}$ above $t$. 
 We recall that this slice is well-defined and furthermore $T_{f_{i,t}}=\mu_{f_{i,t}}$; see e.g. \cite[Propositions 4 and 9]{GV}. 
 By Wirtinger's formula (see e.g. \cite{ChambertLoir:survey}) we further have $\int _{\mathcal{C}_t} \pi_{1,t}^*T_{f_{1,t}}=\deg(\pi_{i,t})=\deg\pi_i$ for $t\in \mathcal{D}$, so that recalling \eqref{positive base} equation \eqref{slice pretend} yields that $\beta=\deg(\pi_1)/\deg(\pi_2)$ as claimed.  
 
 Of course, we have been falsely assuming that $R$ is smooth. To reconcile this we will  reduce to the smooth case. We let $\{R_n\}_{n\in\bN}$ be a sequence of smooth currents such that 
 \begin{align*}
 R_n \rightarrow q_{B*}(\widehat{T}_{\Psi}^{\wedge(\dim\mathcal{X})}\wedge [\mathcal{X}]) \text{;  }
 \pi_i^{*}\widehat{T}_{f_i}\wedge \pi_{i}^*p^*_BR_n& \rightarrow  \pi_1^{*}\widehat{T}_{f_i}\wedge \pi_{i}^*p^*_Bq_{B*}(\widehat{T}_{\Psi}^{\wedge(\dim\mathcal{X})}\wedge [\mathcal{X}]),
 \end{align*}
 where $R_n$ can be given by replacing $\widehat{T}_{\Psi}^{\wedge(\dim\mathcal{X})}\wedge [\mathcal{X}]$ with $T_n$ as in Remark \ref{approxT}.
 From the definition of slicing \cite[Proposition 4.3]{BB} we have that 
 $$\int_{\mathcal{C}}u\pi_i^*\widehat{T}_{f_i}\wedge \pi_{\mathcal{C}}^*R_n = \int_{B_0}\left(\int_{\mathcal{C}_t}u_{|\mathcal{C}_t}\pi_{i,t}^*T_{f_{i,t}}\right)R_n,$$
 for every smooth compactly supported function $u$ on $\mathcal{C}$ and all $n\in\bN$. 
 As the sequence of the currents $R_n$ converges to $R$ we infer that the same equality holds with $R_n$ replaced by $R$. The operator 
 $$\mathfrak{O}(u) = \int_{B_0}\left(\int_{\mathcal{C}_t}u_{|\mathcal{C}_t}\pi_{i,t}^*T_{f_{i,t}}\right)R$$
 is defined on smooth compactly supported functions and we established that its value equals $\int_{\mathcal{C}}u\pi_i^*\widehat{T}_{f_i}\wedge \pi_{\mathcal{C}}^*R$. Following the proof of  \cite[Proof of Proposition 4.3, Appendix]{BB} we see that $\mathfrak{O}$ can be continuously extended to continuous compactly supported functions. 
 From the continuity of the current $\pi_i^*\widehat{T}_{f_i}\wedge \pi_{\mathcal{C}}^*R$ it follows that 
 \begin{align}\label{cont} \int_{\mathcal{C}} u\pi_i^*\widehat{T}_{f_i}\wedge \pi_{\mathcal{C}}^*R = \int_{B_0}\left(\int_{\mathcal{C}_t}u_{|\mathcal{C}_t}\pi_{i,t}^*T_{f_{i,t}}\right)R,
 	\end{align}
 for all continuous and compactly supported functions $u$ and $i=1,2$.  Setting $u = \chi_{\mathcal{D}}$ we arrive at the equality \eqref{slice pretend} and thus we have \eqref{alpha} with $\beta= \deg(\pi_1)/\deg(\pi_2)$. 
 
 Similarly, using \eqref{cont} with $u$ being the continuous function $\psi_{\mathcal{D}}$ corresponding to $\mathcal{D}$ from Proposition \ref{test function} we infer from \eqref{alpha} that
 \begin{align}\label{totrans}
 \int_{\mathcal{D}}\left(\int_{\mathcal{C}_t} \psi_{\mathcal{D}}|_{\mathcal{C}_t} \deg(\pi_{2})d\pi_{1,t}^*\mu_{f_{1,t}}(z)\right)R = \int_{\mathcal{D}}\left(\int_{\mathcal{C}_t} \psi_{\mathcal{D}}|_{\mathcal{C}_t} \deg(\pi_{1})d\pi_{2,t}^*\mu_{f_{2,t}}(z)\right)R,
 \end{align}
 Recalling that $\mathcal{D}$ is chosen as in \eqref{positive base} and $\psi_{\mathcal{D}}$ satisfies the statement of Proposition \ref{test function}, equation \eqref{totrans} implies that the set $\mathcal{P}:=\{t\in \mathcal{D}: \mathcal{C}_t \text{ is weakly }\Phi_t\text{-special}\}$ has positive $R$ measure. 
 In particular since $R$ doesn't charge points (e.g. by \cite[Proposition 4.6.4]{KL}), the set $\mathcal{P}$ is uncountable and must therefore contain a transcendental parameter $t_0\in B_0(\bC)\setminus B_0(\Qbar)$.  But since $B$ is $1$-dimensional this contradicts our assumption that $\bc$ is not weakly $\bbphi$-special. 
 The theorem follows. 	\qed 
  
\smallskip  
  
\noindent In fact, we also infer the following result. 

\begin{theorem}\label{relative bogomolov iso}
	Let $\bbphi=(\bbf_1,\bbf_2): \mathbb{P}_1^2 \rightarrow \mathbb{P}_1^2$ be a split polarized endomorphism of polarization degree $d\ge 2$ and assume that $\mathbf{f}_i$ is isotrivial for some $i\in\{1,2\}$. 
    Let $\mathbf{C} \subset \mathbb{P}_1^2$ be an irreducible curve that is not weakly $\bbphi$-special. 
    Then there exists $\epsilon=\epsilon(\bbphi,\bc)> 0$ such that 
$$	\{(P_1,P_2) \in \mathcal{C}\times_{B_0}\mathcal{C}(\overline{\Q})~:~\hat{h}_{\Phi}(P_1) +  \hat{h}_{\Phi}(P_2)  < \epsilon  \}$$ 
is not Zariski dense in $\mathcal{C}\times_{B_0}\mathcal{C}$. 
\end{theorem}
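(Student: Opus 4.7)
The plan is to mirror the proof of Theorem \ref{relative bogomolov} with $\mathbf{X}=\bc$ and $\bbpsi=\bbphi$, substituting Corollary \ref{equidistributioniso} and Proposition \ref{compactumiso} for Theorem \ref{bogomolov hypothesis} and Proposition \ref{compactum} wherever the non-degeneracy of a relevant subvariety is invoked. Assuming for a contradiction that the set in the statement is Zariski dense in $\mathcal{C}\times_{B_0}\mathcal{C}$ for every $\epsilon>0$, a diagonal selection yields a $(\Phi\times_{B_0}\Phi,\mathcal{C}\times_{B_0}\mathcal{C})$-generic small sequence.

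Before invoking equidistribution I would dispose of the sub-case in which $\bbphi$ is fully isotrivial and $\bc$ is itself $\bbphi$-isotrivial, since this is the only configuration in which Corollary \ref{equidistributioniso} fails to apply. In that situation a pair of Möbius transformations $(\mathbf{M}_1,\mathbf{M}_2)$ defined over $\overline{K}$ simultaneously conjugates $\bbphi$ to a map $\bbphi'$ and $\bc$ to a curve $\bc'$, both defined over $\Qbar$. For each $t\in B_0(\Qbar)$ (or, if necessary, each lift $\widetilde{t}$ to a suitable cover of $B$), the specializations $\mathbf{M}_{i,\widetilde{t}}$ are Möbius transformations over $\Qbar$, so they assemble into a dominant algebraic map $\Theta:\mathcal{C}\times_{B_0}\mathcal{C}\to\bc'\times\bc'$ that carries the small set into $\{(Q_1,Q_2)\in(\bc'\times\bc')(\Qbar):\hat h_{\bbphi'}(Q_1)+\hat h_{\bbphi'}(Q_2)<\epsilon\}$. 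Because $\bc'$ inherits from $\bc$ the property of not being weakly $\bbphi'$-special, the dynamical Bogomolov conjecture for split maps over $\Qbar$ as in \cite{dmm1,dmm2,mm:new} shows the latter set to be non-dense in $\bc'\times\bc'$, and dominance of $\Theta$ transfers non-density to $\mathcal{C}\times_{B_0}\mathcal{C}$.

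In every remaining case, Corollary \ref{equidistributioniso} supplies the positivity $\hat h_{\bbphi\times\bbphi}(\bc\times\bc)>0$ and $\hat h_{\bbf_i\times\bbphi}(\bP_1\times\bc)>0$ for $i=1,2$, which are precisely the non-degeneracy inputs needed to feed \cite[Theorem 1.6]{Yuan:Zhang:new} into each of the three equidistribution applications in the proof of Lemma \ref{equidistributiontwice}. From this point onward the proof of Theorem \ref{relative bogomolov} transcribes verbatim: Lemmas \ref{blocki} and \ref{around diagram} produce the current identity
$$\pi_1^*\widehat T_{f_1}\wedge\pi_1^*p_B^*R \;=\; \beta\,\pi_2^*\widehat T_{f_2}\wedge\pi_2^*p_B^*R,$$
with $R=q_{B*}(\widehat T_\Phi^{\dim\mathcal{C}}\wedge[\mathcal{C}])$ and $\beta>0$; Proposition \ref{compactumiso} provides a compact $\mathcal{D}\subset B_0\setminus E_{\bbphi,\bc}$ of positive $R$-mass; Lemma \ref{limrn} legitimises slicing against $R$; and integrating the identity against the continuous test function $\psi_{\mathcal{D}}$ of Proposition \ref{test function} (available here because its construction only requires $\bc$ to not be weakly $\bbphi$-special, irrespective of isotriviality) forces the set of $t\in\mathcal{D}$ for which $\mathcal{C}_t$ is weakly $\Phi_t$-special to have positive $R$-measure, hence to be uncountable and to contain a transcendental $t_0\in B_0(\bC)\setminus B_0(\Qbar)$. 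Since $B$ is one dimensional this contradicts the hypothesis that $\bc$ is not weakly $\bbphi$-special over $K$.

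The main obstacle is the first step: carrying out the Möbius-conjugation reduction cleanly in the fully-isotrivial sub-case, and in particular verifying that the geometric hypothesis ``$\bc$ not weakly $\bbphi$-special'' translates into the precise hypotheses of the over-$\Qbar$ Bogomolov statements one cites. Once that subcase is settled, the currents-and-slicing machinery of \S\ref{fighting the currents} operates identically in the isotrivial regime, and only bookkeeping changes are required.
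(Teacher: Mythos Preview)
Your proposal is correct and follows essentially the same two-case split as the paper: when $\bc$ is $\bbphi$-isotrivial, conjugate everything to $\Qbar$ and invoke the number-field dynamical Bogomolov; otherwise, rerun the proof of Theorem~\ref{relative bogomolov} with Theorem~\ref{mainiso}, Theorem~\ref{splitineqiso}, Corollary~\ref{equidistributioniso} and Proposition~\ref{compactumiso} in place of their isotrivial-free counterparts. One small point on the first sub-case: your citations \cite{dmm1,dmm2,mm:new} do not fully cover the situation where both $\bbf'_1,\bbf'_2$ are exceptional, since those papers assume at least one coordinate map is ordinary; the paper closes that gap by appealing to \cite[Theorem~2.2]{dmm2} together with the classical results \cite{Zhang:positivesurfaces} and \cite{ZhangBog}.
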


\begin{proof} 

If $\bc$ is not $\bbphi$-isotrivial then the result follows exactly as Theorem \ref{relative bogomolov}. 
If $\bc$ is $\bbphi$-isotrivial then the statement follows from \cite[Theorem 1.1]{dmm1}, \cite[Theorem 6.2]{Zhang:positivesurfaces} and \cite[Corollary 3]{ZhangBog}.
\end{proof} 

\smallskip

\noindent Finally, we deduce Theorem \ref{umb} and its Corollary \ref{small curves}. 

\smallskip

\noindent\emph{Proof of Theorem \ref{umb}: }
Let $\bbphi$ and $\bc$ be as in the statement of Theorem \ref{umb}. 
Assume that Theorem \ref{relative bogomolov} (or Theorem \ref{relative bogomolov iso} in the non-isotrivial-free case) holds for $\bbphi=\bbpsi:\bP_1^2\to \bP_1^2$, for $\mathbf{X}=\bc$ and with constant $\epsilon>0$. So we know that the Zariski closure $\mathcal{M}$ of the set
$	\{(P,Q)\in(\mathcal{C}\times_{B_0}\mathcal{C})(\Qbar)~:~\hat{h}_{\Phi}(P)+\hat{h}_{\Phi}(Q)<\epsilon\} $
	has dimension at most equal to $2$. 
	For each $t\in B_0(\Qbar)$ we let 
	\begin{align*}
		\Sigma_t:=\{z\in \mathcal{C}_t(\Qbar)~:~\hat{h}_{\Phi_t}(z)<\epsilon/2 \}.
	\end{align*}	
    Clearly we have that 
	\begin{align*}
		\Sigma_t\times \Sigma_t\subset \mathcal{M}_t\subset \mathcal{C}_t\times \mathcal{C}_t.
	\end{align*}
Let $\pi_1:\mathcal{M}\to \mathcal{C}$ denote the projection to the first factor. We write $\mathcal{M}=\mathcal{M}_1\cup \mathcal{M}_2$, where $\mathcal{M}_1$ consists of the irreducible components $\mathcal{I}\subset \mathcal{M}$ with $\pi_1(\mathcal{I})=\mathcal{C}$ and $\mathcal{M}_2$ consists of the remaining components. Let $T\subset B_0(\Qbar)$ be the finite set obtained by the image of the vertical irreducible components of $\mathcal{M}$. Then for every $t\in B_0(\Qbar)\setminus T$ the size of the set $\pi_{1,t}(\mathcal{M}_{2,t})$ is uniformly bounded by some constant $D(\mathcal{M}_2)$. We write 
$$\Sigma_{1,t}:=\Sigma_t\setminus(\Sigma_t\cap \pi_{1,t}(\mathcal{M}_{2,t})).$$
As above, we have 
$$\Sigma_{1,t}\times \Sigma_{1,t}\subset \mathcal{M}_{1,t}\subset \mathcal{C}_t\times \mathcal{C}_t.$$
Since all components in $\mathcal{M}_1$ surject onto the first factor of $\mathcal{C}\times_{B_0}\mathcal{C}$, there is a uniform constant $D(\mathcal{M}_1)$ such that 
$$\# \Sigma_{1,t}=\#(\{P\}\times\Sigma_{1,t})\le \#(\pi^{-1}_{1,t}(\{P\})\cap \mathcal{M}_{1,t})\le D(\mathcal{M}_1)$$
for all $t\in B_0(\Qbar)$ and all $P\in\Sigma_{1,t}$. Finally, we conclude that 
$$\#\Sigma_t\le\# \Sigma_{1,t}+D(\mathcal{M}_2)\le D(\mathcal{M}_1)+D(\mathcal{M}_2)$$
for all $t\in B_0(\Qbar)\setminus T$. The theorem follows.
\qed

\smallskip

\noindent\emph{Proof of Corollary \ref{small curves}:}
Recall the definition of the essential minima from \eqref{minima}. 
Let $t\in B_0(\Qbar)$ and write $\overline{L}_{\Phi_t}$ for the invariant adelic metrized line bundle extending $L={\infty}\times\bP_1+ \bP_1\times\{\infty\}$. 
Clearly $e_2(\mathcal{C}_t, \overline{L}_{\Phi_t})\ge 0$. 
Moreover, by Theorem \ref{umb} there is $\epsilon>0$ such that 
 $e_1(\mathcal{C}_t, \overline{L}_{\Phi_t})\ge \epsilon$ for all but finitely many $t\in B_0(\Qbar)$. 
 By Zhang's inequality in Lemma \ref{zhang ineq} we deduce that
 $\hat{h}_{ \overline{L}_{\Phi_t}}(\mathcal{C}_t)=\hat{h}_{\Phi_t}(\mathcal{C}_t)\ge \frac{\epsilon}{2}$ for all but finitely many $t$. This proves $(2)$.

For $(1)$, note that if $t\in B_0(\Qbar)$ is such that $\mathcal{C}_t$ is $\Phi_t$-preperiodic, then by Zhang's inequality and \cite[Lemma 6.1]{dmm1} we have $\hat{h}_{\Phi_t}(\mathcal{C}_t)=0$. 
On the other hand if there is $t\in B_0(\bC)\setminus B_0(\Qbar)$ such that $\mathcal{C}_t$ is $\Phi_t$-preperiodic then since $B$ is $1$-dimensional, $\bc$ would be $\bbphi$-preperiodic, contradicting our assumption that is not weakly $\bbphi$-special. 
This completes the proof of the corollary. 
\qed

\section{Common preperiodic points}\label{applications}
\noindent In this section we establish Theorem \ref{moduli:curve}. 
   
 \smallskip 
   
\noindent\emph{Proof of Theorem \ref{moduli:curve}:}
Let $C\subset \mathrm{Rat}_d\times \mathrm{Rat}_d$ be a curve defined over $\Qbar$ and let $p_i: C\to  \mathrm{Rat}_d$ be its projection to the $i$-th copy of $\text{Rat}_d$. For $\underline{\mu}\in C(\mathbb{C})$ let $f_{p_i(\underline{\mu})}$ denote the rational map associated to $p_i(\underline{\mu})\in \mathrm{Rat}_d(\mathbb{C})$ for $i=1,2$.
We will show that there exists $M = M(C)>0$ such that 
              	$$\text{either }\#\mathrm{Prep}(f_{p_1(\underline{\mu})})\cap \mathrm{Prep}(f_{p_2(\underline{\mu})})<M\text{ or }\mathrm{Prep}(f_{p_1(\underline{\mu})})=\mathrm{Prep}(f_{p_2(\underline{\mu})}).$$
In the setting of Theorem \ref{umb} we set $B = C$ and recall that $K = \overline{\Q}(B)$. 
Let $\bbphi = (f_{p_1(\mathbf{c})} ,f_{p_2(\mathbf{c})})$ where $\mathbf{c}$ is the generic point of $C$. 

We first prove that  the statement holds if we restrict to transcendental points $\underline{\mu} \in C(\C)\setminus C(\Qbar)$. 
In fact, then $\underline{\mu}$ is the embedded image of the generic point of $C$ into $\mathbb{C}$ (so it induces an embedding $K \hookrightarrow \mathbb{C}$) and Theorem \ref{moduli:curve} reduces to the dynamical Manin-Mumford conjecture over $\mathbb{C}$ \cite{dmm1} if one of the maps $(f_{p_1(\mathbf{c})} ,f_{p_2(\mathbf{c})})$ is ordinary or if both  are exceptional to the classic Manin-Mumford conjecture \cite{Hindry}. Note that if $\mathbf{\Delta}$ is $\bbphi$-special then $\mathrm{Prep}(f_{p_1(\underline{\mu})}) = \mathrm{Prep}(f_{p_2(\underline{\mu})})$ for all $\underline{\mu}\in C(\mathbb{C})$.

We may thus restrict our attention to algebraic points in $C(\Qbar)$ and we will consider two cases depending on whether $\bbphi$ is isotrivial or not. 
Assume first that  $\bbphi$ is not isotrivial. 
Then by Theorem \ref{umb} we infer that there exists a bound $M$ for all algebraic points $C(\overline{\Q})$ as in Theorem \ref{moduli:curve} outside a finite set of exceptions $E \subset C(\overline{\Q})$ unless $\mathbf{\Delta}$ is weakly $\bbphi$-special. 
As $\bbphi$ is not isotrivial, $\mathbf{\Delta}$ is weakly $\bbphi$-special if and only if it is $\bbphi$-preperiodic. 
If for some $\underline{\mu} \in E$, the diagonal $\mathbf{\Delta}$  is not $(f_{p_1(\underline{\mu})}, f_{p_2(\underline{\mu})})$-special we can apply  the main theorem of \cite{dmm1} or \cite{Hindry} and adjust the bound $M$ accordingly.  If on the other hand, $\mathbf{\Delta}$ is $(f_{p_1(\underline{\mu})}, f_{p_2(\underline{\mu})})$-special  then $\mathrm{Prep}(f_{p_1(\underline{\mu})}) = \mathrm{Prep}(f_{p_2(\underline{\mu})}) $. Thus Theorem \ref{moduli:curve} is proven if $\bbphi$ is not isotrivial. 

Now assume that $\bbphi$ is isotrivial. 
Then there exists a pair of Möbius transformation $(\mathbf{M}_1,\mathbf{M}_2)$  such that $\mathbf{M}_i\circ f_{p_i(\mathbf{c})}\circ \mathbf{M}_i^{-1}$ is defined over $\overline{\Q}$ for $i=1,2$. 
As at least one $p_1,p_2$ is non-constant, at least one of $\mathbf{M}_1, \mathbf{M}_2$ is not defined over $\overline{\Q}$. Thus $(\mathbf{M}_1, \mathbf{M}_2)(\mathbf{\Delta})$ is not defined over $\overline{\Q}$ and $\mathbf{\Delta}$ is not $\bbphi$-isotrivial. 
If $\mathbf{\Delta}$ is not weakly $\bbphi$-special, then the result follows by Theorem \ref{umb}. 
If on the other hand $\mathbf{\Delta} $ is weakly $\bbphi$-special, then it is the image of a coset in a fixed algebraic group as in Definition \ref{weakly special}. 
If for $\underline{\mu}\in C(\overline{\mathbb{Q}})$ the corresponding coset is a torsion coset (in its fiber) then $\mathrm{Prep}(f_{p_1(\underline{\mu})}) = \mathrm{Prep}(f_{p_2(\underline{\mu})})$. If it is not a torsion coset then $|\mathrm{Prep}(f_{p_1(\underline{\mu})}) \cap \mathrm{Prep}(f_{p_2(\underline{\mu})})| \leq 4$ as $\Delta\cap \mathrm{Prep}(f_{p_1(\underline{\mu})}) \times \mathrm{Prep}(f_{p_2(\underline{\mu})}) \subset \mathbb{P}_1^2 \setminus (\Psi_{f_{p_1(\underline{\mu})}}(G_{f_{p_1(\underline{\mu})}})\times  \Psi_{f_{p_2(\underline{\mu})}}(G_{f_{p_2(\underline{\mu})}})) $. This concludes the proof. 
 \qed

\smallskip 
%%%%%%%%
%%%%%%%%%%%%%%%%

	%\appendix

\renewcommand\thesection{\Alph{section}}
\setcounter{section}{1}
\section*{ Appendix: A specialization result}\label{general inequality}
	
	In this appendix we establish a dynamical analogue of \cite[Theorem 1.4]{Gao:Habegger:bog} for $1$-parameter families of polarized endomorphisms (not necessarily split). 
	More precisely, in Theorem \ref{assume bog} we provide a relation between the fiber-wise Call-Silverman canonical height and a height on the base, generalizing Call-Silverman's result \cite{Call:Silverman} to higher dimensions. 
    Yuan and Zhang \cite[Theorem 5.3.5 and Theorem 6.2.2]{Yuan:Zhang:new} have recently obtained a more general result using their new theory of adelic metrized line bundles on quasi-projective varieties. 
    Here we rely on simpler machinery. Our approach is adapted from the strategy 
	in \cite{DGH:pencils, DGH:uml}, that in turn built on Habegger's work \cite{Habegger:special}. 
	
	To fix notation, let $B$ be a projective, regular, irreducible curve over $\Qbar$ and write $K=\Qbar(B)$.
	Let $\mathbf{A}$ be a projective normal variety and $L$ be a very ample line bundle on $\mathbf{A}$, both defined over $K$. 
	Let $\bbphi: \mathbf{A}\to \mathbf{A}$  be an endomorphism defined over $K$ such that $\bbphi^{*}(L)\iso L^{\otimes d}$ for $d\in \bZ_{\ge 2}$. 
	We call the triple $(\mathbf{A},\bbphi,L)$ a \emph{polarized endomorphism} over $K$ with polarization degree $d$. 
	
	We note that $\bbphi$ can be viewed as a $1$-parameter family of endomorphisms each defined over $\Qbar$. 
	To this end, consider a $B_0$-model $(\mathcal{A},\Phi,\mathfrak{L})$ of $(\mathbf{A},\bbphi,L)$, where $B_0\subset B$ is Zariski open.
	More specifically, $\mathcal{A}$ is a variety defined over $\Qbar$, $\pi:\mathcal{A}\to B_0$ is normal and flat and has generic fiber $\mathbf{A}$, $\Phi: \mathcal{A}\to \mathcal{A}$ is the endomorphism associated to $\bbphi$, the generic fiber of $\mathfrak{L}$ is $L$ and for each $t\in B_0$ the restriction of $\mathfrak{L}$ on the fiber of $\mathcal{A}$ over $t$ is a very ample line bundle 
	such that $(\Phi|_{\pi^{-1}(t)})^{*}(\mathfrak{L}|_{\pi^{-1}(t)})\iso (\mathfrak{L}|_{\pi^{-1}(t)})^{\otimes d}$. 
	For simplicity we write $\Phi_t:=\Phi|_{\pi^{-1}(t)}$, $\mathcal{A}_t:=\mathcal{A}|_{\pi^{-1}(t)}$ and $\mathfrak{L}_t:=\mathfrak{L}|_{\pi^{-1}(t)}$. 
	Thus for each $t\in B_0(\Qbar)$ the triple $(\mathcal{A}_t,\Phi_t,\mathfrak{L}_t)$ is a polarized endomorphism of polarization degree $d$ defined over a number field. In particular we have a fiberwise Call-Silverman canonical height defined for algebraic points $P\in \mathcal{A}^0=\pi^{-1}(B_0)$ by 
	$$\hat{h}_{\Phi,\mathfrak{L}}(P)=\hat{h}_{\Phi_{\pi(P)},\mathfrak{L}_{\pi(P)}}(P),$$
	where $\hat{h}_{\Phi_{t},\mathfrak{L}_{t}}$ is the canonical height defined in \cite{Call:Silverman} associated to the endomorphism $\Phi_{t}:\mathcal{A}_t\to \mathcal{A}_t$ for $t\in B_0(\Qbar)$ with a polarization given by $\mathfrak{L}_t$.
	
	Recall that for an irreducible subvariety $\mathbf{X}$ of $\mathbf{A}$ defined over $K$ we write $h_{\overline{L}_\bbphi}(\mathbf{X})\ge 0$ for the height of $\mathbf{X}$ with respect to the canonical metric $\overline{L}_{\bbphi}:=(L,\{\|\cdot\|_{\bbphi,L,v}\}_v)$ as in \S\ref{background}. 
    We write $\mathcal{X}\subset \mathcal{A}$ for the Zariski closure of $\mathbf{X}$ in $\mathcal{A}$.
	We can now state our theorem. 
	
	\begin{theorem}\label{assume bog}
		Let $(\mathcal{A},\Phi,\mathfrak{L})$ be a $B_0$-model of a polarized endomorphism $(\mathbf{A},\bbphi,L)$ over $K$ with polarization degree $d\ge 2$ and let $\mathbf{X}\subset \mathbf{A}$ be an irreducible subvariety. 
        Let also $\mathcal{N}$ be an ample line bundle on $B$. Assume that $h_{\overline{L}_{\bbphi}}(\mathbf{X})>0$. Then there are constants $\chi:=\chi(\mathbf{X},\bbphi,L)>0$, and $m:=m(\mathbf{X},\bbphi,L,\mathcal{N},\epsilon)>0$ and a Zariski open subset $\mathcal{U}:=\mathcal{U}(\bbphi, \mathbf{X},\mathcal{N})\subset \mathcal{X}$ such that 
		\begin{align*}
			\hat{h}_{\Phi,\mathfrak{L}}(P)\ge \chi h_{\mathcal{N}}(\pi(P))-m,
		\end{align*}
		for all $P\in \mathcal{U}(\Qbar)$. 
	\end{theorem}
	
 \begin{remark}
    The constant $\chi$ in Theorem \ref{assume bog} can be taken as 
    $$\chi= \frac{\deg_{L}(\mathbf{X})\cdot h_{\overline{L}_{\bbphi}}(\mathbf{X})}{2c},$$ where $c:=c(\bbphi,L,\deg_{L}(\mathbf{X}))>0$ is described in \eqref{effective}.
    \end{remark}

   The proof of this theorem relies on the functoriality properties of dynamical canonical heights on projective varieties and on Siu's bigness inequality \cite[Theorem 2.2.15]{Lazarsfeld:Positivity:I}. 
	To employ the height machine, we will need to work on projective varieties. 
	To this end, following \cite{Xander}, we define inductively a sequence $(\mathcal{A}_N,\mathcal{L}_N)$ of $B$-models of $(\mathbf{A},L^{\otimes d^{N-1}})$ such that $\mathcal{L}_N$ is nef  for each $N$ as follows: 
	For $N=1$ note that $\mathbf{A}$ is projective and $L$ is very ample so we may embed $\mathbf{A}$ into a projective space over $K$ by $L$ and subsequently identify $\mathbb{P}^n_{K}$ with the generic fiber of $\mathbb{P}^n\times B$ to get 
	$$\mathbf{A}\hookrightarrow \mathbb{P}^n_{K}\hookrightarrow \mathbb{P}^n\times B.$$
	We let $\mathcal{A}_1$ be the Zariski closure of $\mathbf{A}$ in $\mathbb{P}^n\times B$
	and write $\mathcal{L}=\mathcal{O}_{\mathcal{A}_1}(1)$. 
	Letting $\pi: \mathcal{A}_1\to B$ denote the projection to the second factor, we may choose a very ample line bundle $\mathcal{M}$ on $B$ such that $\mathcal{L}\otimes \pi^{*}\mathcal{M}$ is nef; see the proof of \cite[Lemma 3.1]{Xander}. We let $\mathcal{L}_1=\mathcal{L}\otimes \pi^{*}\mathcal{M}$ and have a $B$-model of $(\mathbf{A},L)$ given by $(\mathcal{A}_1,\mathcal{L}_1)$.

	For the inductive step suppose that $(\mathcal{A}_N,\mathcal{L}_N)$ is a $B$-model of $(\mathbf{A},L^{\otimes d^{N-1}})$ such that $\mathcal{L}_N$ is nef and let $j_N:\mathbf{A} \hookrightarrow \mathcal{A}_N$ be the inclusion of the generic fiber. Let 
	$$\Gamma_{\bbphi}=(i_d, \bbphi): \mathbf{A}\to \mathbf{A}\times_K \mathbf{A}$$
	be the graph morphism on the generic fiber and write 
	$$\tilde{\Gamma}_{\bbphi,N}= (j_N,j_N)\circ \Gamma_{\bbphi}: \mathbf{A}\to \mathcal{A}_{N}\times_B\mathcal{A}_N.$$
	Then define $\mathcal{A}_{N+1}$ as the Zariski closure of $\tilde{\Gamma}_{\bbphi,N}(\mathbf{A})$ in $\mathcal{A}_{N}\times_B\mathcal{A}_N$ and let $\mathcal{L}_{N+1}=(\mathrm{pr}^{*}_2\mathcal{L}_N)|_{\mathcal{A}_{N+1}}$, where $\mathrm{pr}_2: \mathcal{A}_{N}\times_B\mathcal{A}_N\to\mathcal{A}_N$ is the projection to the second factor. 
	Note that $\mathcal{L}_{N+1}$ is nef as the pullback of a nef line bundle. 
	If $\mathbf{X}$ is a subvariety of $\mathbf{A}$ over $K$, we denote by $\mathcal{X}_N$ its Zariski closure in $\mathcal{A}_N$. 
    We can now describe the $h_{\overline{L}_{\bbphi}}$-height of our subvariety as a limit of intersection numbers in these models. 
	
	\begin{lemma}\label{height positive}
		Let $(\mathbf{A},\bbphi,L)$ be a polarized endomorphism over $K$ of polarization degree $d\ge 2$.
		Let $\mathbf{X}$ be an irreducible subvariety of $\mathbf{A}$ of dimension $d_X\ge 0$. 
		Then we have 
		\begin{align*}
			h_{\overline{L}_{\bbphi}}(\mathbf{X})=\frac{1}{\deg_{L}(\mathbf{X})}\lim_{N\to\infty}\frac{(\mathcal{L}^{d_X+1}_N\cdot \mathcal{X}_N)}{(d_X+1)d^{(N-1)(d_X+1)}}.
		\end{align*}
	\end{lemma}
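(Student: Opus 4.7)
The plan is to realize the canonical adelically metrized line bundle $\overline{L}_{\bbphi}$ as a uniform limit of model metrics arising from the sequence $(\mathcal{A}_N,\mathcal{L}_N)$, and then to invoke the continuity of the Zhang--Gubler intersection pairing under uniform convergence of semipositive metrics.

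The first step is to verify by induction on $N$ that the restriction of $\mathcal{L}_N$ to the generic fiber $\mathbf{A}$ is isomorphic to $L^{\otimes d^{N-1}}$. The base case is immediate since $\mathcal{L}_1|_{\mathbf{A}}\simeq L$. For the inductive step, the graph morphism $\tilde{\Gamma}_{\bbphi,N}:\mathbf{A}\hookrightarrow\mathcal{A}_N\times_B\mathcal{A}_N$ identifies the generic fiber of $\mathcal{A}_{N+1}$ with $\mathbf{A}$, and under this identification the second projection $\mathrm{pr}_2$ restricts to $\bbphi$. Hence $\mathcal{L}_{N+1}|_{\mathbf{A}}\simeq\bbphi^{*}(\mathcal{L}_N|_{\mathbf{A}})\simeq L^{\otimes d^N}$. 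In particular, each $B$-model $(\mathcal{A}_N,\mathcal{L}_N)$ induces a semipositive adelic model metric $\overline{\mathcal{L}}_N$ on $L^{\otimes d^{N-1}}$, the semipositivity being a consequence of the nefness of $\mathcal{L}_N$, and taking the $d^{-(N-1)}$-th root yields a semipositive adelic metric $\overline{L}_{N}$ on $L$ satisfying $\overline{L}_N^{\otimes d^{N-1}}\simeq\overline{\mathcal{L}}_N$.

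The second, and main, step is to prove that $\overline{L}_N$ converges uniformly to $\overline{L}_{\bbphi}$ as $N\to\infty$. This is a standard Tate-style limiting argument: the inductive construction of $(\mathcal{A}_{N+1},\mathcal{L}_{N+1})$ forces the identification $\bbphi^{*}\overline{\mathcal{L}}_N=\overline{\mathcal{L}}_{N+1}$ on the generic fiber, so the only discrepancy between $\bbphi^{*}\overline{L}_N$ and $\overline{L}_{N+1}^{\otimes d}$, after rescaling, comes from vertical divisors supported over $B\setminus B_0$. These contribute an error that is uniformly bounded in $N$, and after division by $d^{N-1}$ the resulting sequence of semipositive adelic metrics on $L$ is Cauchy in the supremum norm. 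Its limit is a semipositive adelic metric $\overline{L}_\infty$ satisfying $\bbphi^{*}\overline{L}_\infty\simeq\overline{L}_\infty^{\otimes d}$, and by uniqueness of the Tate limit this is precisely $\overline{L}_{\bbphi}$. The main technical obstacle is to produce a compatible family of isomorphisms $\mathcal{L}_{N+1}\simeq\bbphi^{*}\mathcal{L}_N\otimes\mathcal{O}(D_N)$ with vertical divisors $D_N$ whose contribution to the metric can be controlled uniformly in $N$.

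Finally, one applies the continuity of the $(d_X+1)$-fold intersection pairing with respect to uniform convergence of semipositive adelic metrics (as summarized in \S\ref{background}), together with the compatibility between model metrics and geometric intersection theory, namely $\overline{\mathcal{L}}_N^{\,d_X+1}\cdot\mathbf{X}=\mathcal{L}_N^{d_X+1}\cdot\mathcal{X}_N$. This yields
\[
\overline{L}_{\bbphi}^{\,d_X+1}\cdot\mathbf{X}=\lim_{N\to\infty}\overline{L}_N^{\,d_X+1}\cdot\mathbf{X}=\lim_{N\to\infty}\frac{\mathcal{L}_N^{d_X+1}\cdot\mathcal{X}_N}{d^{(N-1)(d_X+1)}},
\]
and after division by $d_X+1$ (together with the appropriate degree normalization of $h_{\overline{L}_{\bbphi}}$) one obtains the claimed limit formula.
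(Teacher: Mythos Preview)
Your proposal is correct and follows essentially the same route as the paper's proof: realize the model metrics $\overline{L}_N^{\otimes 1/d^{N-1}}$ as a uniformly convergent sequence with limit $\overline{L}_{\bbphi}$, then invoke continuity of the arithmetic intersection pairing for semipositive adelic metrics. The only difference is that the paper delegates both steps to the literature (Faber \cite[\S3.1]{Xander} for the uniform convergence and coherence at all but finitely many places, and \cite[Theorem 3.5(c)]{Gubler:equi} or \cite[Theorem 2.1(ii)]{Xander} for continuity of the pairing), whereas you sketch the Tate-style argument and the model/metric compatibility directly.
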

	
	\begin{proof}
		The lemma follows from the work in \cite{Xander, Gubler:equi}. 
		For each $N\ge 0$ we have constructed $B$-models $(\mathcal{A}_N,\mathcal{L}_N)$ of $(A, L^{\otimes d^{N-1}})$. These induce semipositive continuous adelic metrics $\{\|\cdot\|_{\mathcal{L}_N,v}\}_{v}$ on $L^{\otimes d^{N-1}}$ as in \cite[page 351]{Xander}, which we denote by $\overline{L}_N$. 
		As shown  in \cite[\S 3.1]{Xander} the sequence of metrized line bundles $\{\overline{L}^{\otimes 1/d^{N-1}}_N\}_N$ converges uniformly to the $\bbphi$-invariant metric $\overline{L}_{\bbphi}$ supported on $L$ and moreover outside of finitely many places $v$ of $K$ we have $\|\cdot\|^{1/d^{N-1}}_{\overline{L}_N,v}=\|\cdot\|_{\overline{L}_{\bbphi},v}$ for all $N$. The lemma then follows  in view of \cite[Theorem 3.5 (c)]{Gubler:equi} or \cite[Theorem 2.1 (ii)]{Xander} (see also the discussion around equation (3) in \cite{Xander}). 
	\end{proof}
	
	%%%%%%%%
	
	We proceed to clarify the construction of our models and describe the heights associated to them.  
	Let $\mathbf{\iota}: \mathbf{A}\hookrightarrow \mathbb{P}^n_K$ be the embedding over $K$ induced by $L$. 
	We first note that by \cite[Corollary 2.2]{Fakhruddin} we can find an endomorphism $\mathbf{F}:\mathbb{P}^n\to \mathbb{P}^n$ defined over $K$ and such that $\mathbf{F}\circ\iota= \iota\circ \bbphi$.
	Spreading it out and shrinking $B_0$ if necessary we have for each $t\in B_0$ an embedding $\iota_t: \mathcal{A}_t\hookrightarrow \mathbb{P}^n$ given by $\mathfrak{L}_t$ and an endomorphism $F_t:\mathbb{P}^n\to \mathbb{P}^n$ such that 
	$F_t\circ\iota_t= \iota_t\circ \Phi_t.$
	Note here that $\mathfrak{L}= \iota^{*}(\mathcal{L})$. 
	We write $\mathcal{A}^0=\pi^{-1}(B_0)$ to be the part of $\mathcal{A}$ on which $\Phi$ is an endomorphism.  
	For $k\in\mathbb{N}$ we let $(\mathbb{P}^n\times B_0)^{\times k}$ denote the $k$th fibered power over $B_0$ and write 
	$F^{\times k}: (\mathbb{P}^n\times B_0)^{\times k}\to (\mathbb{P}^n\times B_0)^{\times k}$ to be the endomorphism acting as $F_t$ on each fiber $\mathbb{P}^n$ while fixing $t\in B_0$.
	By our definition 
	$\mathcal{A}_N\subset (\mathbb{P}^n\times B)^{\times 2^{N-1}}$ is the Zariski closure of $\mathcal{A}^0_N$, where $\mathcal{A}^0_N$ is defined inductively as follows: 
	\begin{align}
		\begin{split}
			\mathcal{A}^0_1&=\{(\iota_t(z),t): (z,t)\in \mathcal{A}^0\},\\
			%\mathcal{A}^0_2&=\{(\iota_t(z),t,F_t(\iota_t(z)),t): (z,t)\in \mathcal{A}^0\}
			\mathcal{A}^0_{N+1}&=\{(Q,F^{\times 2^{N-1}}(Q)): Q\in \mathcal{A}^0_{N}\}.
		\end{split}
	\end{align}
	Recalling that $\mathcal{L}_{N+1}=(\mathrm{pr}^{*}_2\mathcal{L}_N)|_{\mathcal{A}_{N+1}}$, the functoriality of heights  allows us to choose the heights associated to $\mathcal{L}_{N+1}$ inductively so that 
	$$h_{\mathcal{L}_{N+1}}(Q,F^{\times 2^{N-1}}(Q))= h_{\mathcal{L}_{N}}(F^{\times 2^{N-1}}(Q))+O_N(1),$$
	for $Q\in \mathcal{A}^0_{N}(\Qbar)$ with a bounded error term depending on $N$ but independent of $Q$. 
	More specifically if for $Z:=(z,t)\in \mathcal{A}_t(\Qbar)\times B_0(\Qbar)$ we define $Z_{N+1}\in \mathcal{A}^0_{N+1}(\Qbar)$ as the (unique) point in $\mathcal{A}^0_{N+1}(\Qbar)$ determined by $Z$ (its `first' projective coordinate is $\iota_t(z)$ and `last' projective coordinate is given by $F^{N}_t(\iota_t(z))$), then we have 
	\begin{align}\label{height}
		\begin{split}
			h_{\mathcal{L}_{N+1}}(Z_{N+1})&=h_{\iota^{*}\mathcal{L}_1}(\Phi^{N}(Z))+O_N(1)= h_{\iota^{*}_t\mathcal{L}_t}
			(\Phi^N_t(z))+ h_{\mathcal{M}}(t) +O_N(1)\\
			&= h_{\mathfrak{L}_t}(\Phi^N_t(z))+ h_{\mathcal{M}}(t)+O_N(1).
		\end{split}
	\end{align}
	%%%%%%
	It also follows from our construction that $\mathcal{X}_N\subset \mathcal{A}_N$ defined earlier is the Zariski closure of $\mathcal{X}^0_N$, where for each $N$ the set $\mathcal{X}^0_{N+1}$ is the graph of $\mathcal{X}^0_N$ under $F^{\times 2^{N-1}}$.

	%%%%%%%%
	%%%%%%%%%%%%%%
	%%%%%%%%%%%%%%%%%
	Finally, we write $f_N: \mathcal{A}_N\to B$ and define $\mathcal{M}_N=(f_N)^{*}(\mathcal{M})$. 
	We will need the following bound on intersection numbers. 
	
	\begin{lemma}\label{upper bound}
		We have $(\mathcal{M}_N\cdot \mathcal{L}^{\dim \mathbf{X}}_N\cdot \mathcal{X}_N) \le c d^{(N-1)\dim \mathbf{X}}$ for a constant $c:=c(\bbphi,L,\mathbf{X})>0$.
	\end{lemma}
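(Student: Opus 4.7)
The plan is to establish the claim as an equality using the projection formula, since $\mathcal{M}_N$ is pulled back from the base curve $B$. Write $d_X := \dim \mathbf{X}$ and $e := \deg_B(\mathcal{M})$. Because $\mathcal{M}_N = f_N^{*}\mathcal{M}$, the projection formula yields
\begin{align*}
\mathcal{M}_N \cdot \mathcal{L}_N^{d_X} \cdot [\mathcal{X}_N]
= \mathcal{M} \cdot (f_N)_{*}\bigl(\mathcal{L}_N^{d_X} \cdot [\mathcal{X}_N]\bigr)
= e \cdot \deg\bigl((f_N)_{*}(\mathcal{L}_N^{d_X}\cdot [\mathcal{X}_N])\bigr),
\end{align*}
where the last equality uses that $B$ is a curve. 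Note $\mathcal{X}_N \to B$ is proper, flat (after possibly shrinking $B_0$) and surjective of relative dimension $d_X$, so the cycle $(f_N)_{*}(\mathcal{L}_N^{d_X}\cdot[\mathcal{X}_N])$ is a well-defined divisor on $B$ whose degree equals the intersection number of $\mathcal{L}_N^{d_X}$ with the generic fiber of $\mathcal{X}_N \to B$.

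Next, I would identify this generic fiber. By the inductive construction recalled above, $(\mathcal{A}_N,\mathcal{L}_N)$ is a $B$-model of $(\mathbf{A},L^{\otimes d^{N-1}})$, so the generic fiber of $\mathcal{A}_N \to B$ is canonically isomorphic to $\mathbf{A}$ (via projection onto the first factor of $(\mathbb{P}^n\times B)^{\times 2^{N-1}}$), under which the generic fiber of $\mathcal{X}_N$ corresponds to $\mathbf{X}$. A small induction, using $\mathcal{L}_{N+1}=(\mathrm{pr}_2^{*}\mathcal{L}_N)|_{\mathcal{A}_{N+1}}$ and $\bbphi^{*}L^{\otimes d^{N-1}} = L^{\otimes d^N}$, confirms that $\mathcal{L}_N$ restricts to $L^{\otimes d^{N-1}}$ on this generic fiber. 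Consequently,
\begin{align*}
\deg\bigl((f_N)_{*}(\mathcal{L}_N^{d_X}\cdot [\mathcal{X}_N])\bigr)
= \bigl(L^{\otimes d^{N-1}}\bigr)^{d_X}\cdot [\mathbf{X}]
= d^{(N-1)d_X}\,\deg_L(\mathbf{X}).
\end{align*}

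Combining the two displays, we obtain in fact the equality
\begin{align*}
\mathcal{M}_N \cdot \mathcal{L}_N^{d_X}\cdot [\mathcal{X}_N]
= e\,\deg_L(\mathbf{X})\cdot d^{(N-1)d_X},
\end{align*}
so the lemma holds with $c := \deg_B(\mathcal{M})\cdot\deg_L(\mathbf{X})$, which depends only on $\mathbf{X}$, $L$ and the auxiliary datum $\mathcal{M}$ (itself determined by $\bbphi$ and $L$ through the nefness requirement on $\mathcal{L}_1$). I do not anticipate a genuine obstacle here; the only minor subtlety is checking that the fiberwise identification of $(\mathcal{X}_{N,\eta},\mathcal{L}_N|_{\mathcal{X}_{N,\eta}})$ with $(\mathbf{X},L^{\otimes d^{N-1}})$ is compatible with the construction, which follows cleanly from the recursive definition via graph closures.
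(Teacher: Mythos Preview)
Your argument is correct and in fact yields an \emph{equality} with the clean constant $c=\deg_B(\mathcal{M})\cdot\deg_L(\mathbf{X})$, whereas the paper only proves an inequality with a considerably larger constant. The two approaches are genuinely different. The paper follows the Dimitrov--Gao--Habegger template: it embeds $B\hookrightarrow\mathbb{P}^m$ via $\mathcal{M}$, projects $\mathcal{X}_N$ to $\mathbb{P}^n\times\mathbb{P}^n\times\mathbb{P}^m$ via the first and last factors, realizes the image as a component of an explicit complete intersection cut out by the equations $y_if_0^{N-1}-y_0f_i^{N-1}=0$, and then invokes Fulton's positivity \cite[Corollary~12.2(a)]{Fulton} to bound the resulting multidegree. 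This machinery is designed to handle higher-dimensional bases, where your projection-formula shortcut is unavailable. You instead exploit the standing hypothesis $\dim B=1$: since $\mathcal{M}_N=f_N^*\mathcal{M}$ and $A_1(B)=\mathbb{Z}\cdot[B]$, the projection formula reduces the computation to the generic-fiber degree $(L^{\otimes d^{N-1}})^{d_X}\cdot[\mathbf{X}]$, which is immediate from the construction of the models. Your route is more elementary, sharper, and perfectly adequate for the paper's purposes; the paper's route is a direct transcription of an argument built for a more general setting.

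One small terminological slip: the pushforward $(f_N)_*(\mathcal{L}_N^{d_X}\cdot[\mathcal{X}_N])$ lands in $A_1(B)$, so it is a $1$-cycle (an integer multiple of $[B]$), not a divisor on $B$; what you call its ``degree'' is that integer coefficient. This does not affect the argument.
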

	Our proof is adapted from \cite[Proposition 4.3]{DGH:uml}, which was inspired by K\"uhne's work on semiabelian varieties \cite{Kuhne:semiabelian}. A slightly different approach to this upper bound was employed in Habegger's earlier works \cite{Habegger:boundedheight, Habegger:special}. An approach relying on currents is demonstrated in \cite{GV}. 
    Using the work in \cite{GV}, Gauthier \cite{Gauthier:equi} has obtained \cite[Lemma 17]{Gauthier:equi} a related height inequality in the case of points $\mathbf{X}$.
	\begin{proof}[Proof of Lemma \ref{upper bound}]
    Write $d_X:=\dim \mathbf{X}$. 
		We may use $\mathcal{M}$ to embed $B\hookrightarrow \mathbb{P}^m$ for some $m$. 
		Abusing notation slightly we continue to denote by $F$ the map induced by $F$ defined earlier which is now an endomorphism of $\mathbb{P}^n \times (\mathbb{P}^m\setminus Z)$ for a Zariski closed subset $Z\subset \mathbb{P}^m$ corresponding to the fibers above $B\setminus B_0$. 
		We also identify $\mathcal{X}^0_N, \mathcal{A}^0_N$ and their Zariski closures $\mathcal{X}_N,\mathcal{A}_N$ with their images in $(\mathbb{P}^n\times \mathbb{P}^m)^{2^{N-1}}$. 
		Letting $x=[x_0,\ldots,x_n]$ and $s=[s_0,\ldots,s_m]$ denote the projective coorinates on $\mathbb{P}^n$ and $\mathbb{P}^m$ respectively, we write
		$$F(x,s)=([f_0(x,s):\ldots:f_n(x,s)], s)$$
		for bihomogeneous $f_{i}(x,s)$ of bidegree $(d,D_1)$ in $(x,s)$ for each $i=0,\ldots,n$. 
		Since $\mathcal{X}_1\subset \mathbb{P}^n\times \mathbb{P}^m$ dominates the base but $Z$ does not, we may henceforth assume that $f_0$ is not identically zero on $(x,s)\in \mathcal{X}_1$. 
		The $\ell$-th iterate of $F$ is denoted by 
		$$F^{\ell}(x,s)=([f^{\ell}_0(x,s):\ldots:f^{\ell}_n(x,s)], s).$$
		It is easy to see that the $f^{\ell}_{i}(x,s)$ are bihomogeneous of bidegree $(d^{\ell},D_{\ell})$ in $(x,s)$ for each $i=0,\ldots,n$ and we have $D_{\ell+1}= D_1+d D_{\ell},$ so that 
		$D_{\ell}=\frac{d^{\ell}-1}{d-1}D_1 \le d^{\ell} D_1.$
		
		If we donote by 
		$$\text{pr}_{N}:(\mathbb{P}^n\times \mathbb{P}^m)^{2^{N-1}} \rightarrow \mathbb{P}^n\times \mathbb{P}^n\times  \mathbb{P}^m$$
		the projection to the first and last two factors in the product of the $2^{N}$ projective spaces above, then we have 
		\begin{align}
			\begin{split}
				\text{pr}_N^*\mathcal{O}(0,1,1) &= \mathcal{L}_N \\
				\text{pr}_N^*\mathcal{O}(0,0,1) &= \mathcal{M}_N.
			\end{split}
		\end{align}
		Since $\text{pr}_N$ has degree equal to $1$, by the projection forumla we get  
		\begin{align}\label{projection}
			(\mathcal{L}_N^{d_X}\cdot \mathcal{M}_N\cdot\mathcal{X}_N) = (\mathcal O(0,1,1)^{d_X}\cdot \mathcal O(0,0,1)\cdot \mathrm{pr}_{N*}\mathcal{X}_N).   
		\end{align} 
		We may now follow closely the arguments in \cite[Proposition 4.3]{DGH:uml}.
		Writing $[\Gamma]$ for the cycle in $\mathbb{P}^n\times \mathbb{P}^n\times\mathbb{P}^m$ representing the subvariety $\{(x,y,s)~:~y\in \mathbb{P}^n, (x,s)\in\mathcal{X}_1 \}$, we see that $\mathrm{pr}_{N*}(\mathcal{X}_N)\subset  \mathbb{P}^n\times \mathbb{P}^n\times  \mathbb{P}^m$ is an irreducible component of the intersection of $V_i:=\{y_if^{N-1}_0(x,s) -f^{N-1}_i(x,s)y_0=0 \}$ and $\Gamma$ over all $i=1,\ldots,n$.  
		In fact, as in \cite[Proposition 4.3 before equation (4.12)]{DGH:uml}, using Fulton's positivity result \cite[Corollary 12.2.(a)]{Fulton}, we get
		\begin{align}\label{cycle}
			(\mathcal{L}_N^{d_X}\cdot \mathcal{M}_N\cdot\mathcal{X}_N) \le (\mathcal O(0,1,1)^{d_X}\cdot \mathcal O(0,0,1)\cdot \mathcal{O}(d^{N-1},1,D_{N-1})\cdot [\Gamma]).
		\end{align}
		Continuing as in \cite{DGH:uml} we observe that the positive cycle $[\Gamma]\subset \mathbb{P}^n\times \mathbb{P}^n\times  \mathbb{P}^m$ is rationally equivalent with $$\displaystyle\sum_{i+p=n+m-d_X-1} a_{ip}H^{\cdot i}_1H_2^{\cdot p},$$ where $a_{ip}\in\bZ_{\ge 0}$ and  $H_1$ (respectively $H_2$) is the pullback of $\mathcal{O}(1)$ by the projection of $\mathbb{P}^n\times \mathbb{P}^n\times\mathbb{P}^m$ to the first (resp. third) factor. We thus infer that the right hand side of \eqref{cycle} is smaller than
		\begin{align}
			\sum_{i+p=n+m-d_X-1}a_{ip}(\mathcal O(0,1,1)^{d_X}\cdot \mathcal O(0,0,1)\cdot \mathcal{O}(d^{N-1},1,D_{N-1})^n\cdot \mathcal{O}(1,0,0)^{ i}\cdot \mathcal{O}(0,0,1)^{j}).
		\end{align}
		Using the linearity of intersection numbers and their vanishing properties exactly as in \cite{DGH:uml} it is then easy to infer that 
		\begin{align}
			(\mathcal{L}_N^{d_X}\cdot \mathcal{M}_N\cdot\mathcal{X}_N) \le  \sum_{\substack{i+p=n+m-d_X-1, i+i''=n \\ j'+p'=d_X,j'+j''=n \\ i''+j''+p''=n,p+p'+p''=m-1}} a_{ip} {d_X\choose
				j',p'} {n\choose i'',j'',p''} {d}^{(N-1) i''} {D_{N-1}}^{p''}. 
		\end{align}
		Noting that $p''+i''=n-j''=j'=d_X-p'\le d_X$ and that $D_{\ell} \le d^{\ell} D_1,$ we thus infer that 
		\begin{align}
			(\mathcal{L}_N^{d_X}\cdot \mathcal{M}_N\cdot\mathcal{X}_N) \le d^{(N-1)d_X} D_1^{d_X}2^{d_X}3^n \sum_{\substack{i+p=n+m-d_X-1}}
			a_{ip}. \end{align}
		The proposition then follows with 
        \begin{align}\label{effective}
        c=D_1^{d_X}2^{d_X}3^n \displaystyle\sum_{\substack{i+p=n+m-d_X-1}}
        		a_{ip}.
        \end{align}
	\end{proof}
	
    \noindent We are now ready to prove Theorem \ref{assume bog}.
    
    \smallskip 
    
	\noindent\emph{Proof of Theorem \ref{assume bog}:}
	For simplicity we write $d_X=\dim \mathbf{X}$. 
	Let $\epsilon>0$ be small. 
	By Lemma \ref{height positive}, we can find $N_{\epsilon}\in\bN$ so that 
	\begin{align*}
		(\deg_{L}(\mathbf{X})h_{\overline{L}_{\bbphi}}(\mathbf{X})+\epsilon) (d_X+1)d^{(N-1)(d_X+1)}> (\mathcal{L}^{d_X+1}_N\cdot \mathcal{X}_N)>(\deg_{L}(\mathbf{X})h_{\overline{L}_{\bbphi}}(\mathbf{X})-\epsilon) (d_X+1)d^{(N-1)(d_X+1)},
	\end{align*}
	for all $N\ge N_{\epsilon}$. 
	Let $N\ge N_{\epsilon}$ be arbitrary. 
	For simplicity we write  $\tilde{\mathcal{L}}_N:=\mathcal{L}_N|_{\mathcal{X}_N}$ and $\tilde{\mathcal{M}}_N:=\mathcal{M}_N|_{\mathcal{X}_N}$. 
	Note that $\mathcal{X}_{N+1}$ is a projective variety of dimension $d_X+1$ and its line bundles $\tilde{\mathcal{M}}_{N+1}$ and $\tilde{\mathcal{L}}_{N+1}$ are nef.
	Let $c>0$ be as in Lemma \ref{upper bound} and $y_{\epsilon}\in \mathbb{Q}_{>0}$ be such that 
   \begin{align}\label{y}
		0<\frac{(\deg_{L}(\mathbf{X})h_{\overline{L}_\bbphi}(\mathbf{X})-2\epsilon)}{c }	<y_{\epsilon}< \frac{(\deg_{L}(\mathbf{X})h_{\overline{L}_\bbphi}(\mathbf{X})-\epsilon)}{c }. 
	\end{align}
	Because of the bounds on the intersection numbers in Lemmata \ref{height positive} and \ref{upper bound}, Siu's theorem \cite[Theorem 2.2.15]{Lazarsfeld:Positivity:I} yields that the line bundle $\tilde{\mathcal{L}}_{N}-y_{\epsilon}d^{N-1}\tilde{\mathcal{M}}_{N}$ is big.  
	In particular, there is $c_1(N,\epsilon)>0$ so that 
	$$h_{\tilde{\mathcal{L}}_{N}}(Q)>y_{\epsilon}d^{N-1}h_{\tilde{\mathcal{M}}_{N}}(Q)-c_1(N,\epsilon),$$ 
	for all algebraic points $Q$ in a Zariski open subset of $\mathcal{X}_{N}$. 
	Our considerations for the heights in \eqref{height} then yield that we can find a constant $c_2(N,\epsilon)$ with
	\begin{align}\label{bySiu}
		h_{\mathfrak{L}_t}(\Phi^{N-1}_t(z))+ h_{\mathcal{M}}(t)>y_{\epsilon}d^{N-1}h_{\mathcal{M}}(t)-c_2(N,\epsilon),
	\end{align}
	for all $(z,t)$ in a Zariski open subset $\mathcal{U}_N$ of $\mathcal{X}$.
	By  \cite[Theorem 3.1]{Call:Silverman} we also have 
	\begin{align}\label{byCS}
		\hat{h}_{\Phi_t,\mathfrak{L}_t}(\Phi^{N-1}_t(z))- h_{\mathfrak{L}_t}(\Phi^{N-1}_t(z))\ge -\kappa (h_{\mathcal{M}}(t)+1),
	\end{align}
	for all $(z,t)$ in a possibly smaller open $\mathcal{U}_N\subset\mathcal{X}$ and for positive constant $\kappa$ which can can be chosen independently of $N$. 
	
	We now proceed to fix  $N=N'_{\epsilon}\ge N_{\epsilon}$ such that $\frac{\kappa+1}{d^{N-1}}<\frac{\epsilon}{c}$. 
	Combining the inequalities \eqref{bySiu} and \eqref{byCS} we get that for all $(z,t)$ in a (fixed) Zariski open subset of $\mathcal{X}$ the following holds
	\begin{align}
		\begin{split}
			d^{N-1}\hat{h}_{\Phi_t,\mathfrak{L}_t}(z)&=\hat{h}_{\Phi_t,\mathfrak{L}_t}(\Phi^{N-1}_t(z))\ge h_{\mathfrak{L}_t}(\Phi^{N-1}_t(z))-\kappa (h_{\mathcal{M}}(t)+1)\\
			&\ge (y_{\epsilon}d^{N-1}-1 -\kappa )h_{\mathcal{M}}(t)-c_2-\kappa.
		\end{split}
	\end{align}
	Dividing by $d^{N}$ and noting our choice for $N$ and $y_{\epsilon}$ and that $h_{\mathcal{M}}$ is non-negative, we get 
	\begin{align*}
		\hat{h}_{\Phi_t,\mathfrak{L}_t}(z)&\ge \left (y_{\epsilon}- \frac{\kappa+1}{d^{N-1}}\right)h_{\mathcal{M}}(t)-\frac{c_2+\kappa}{d^{N-1}}\\
		&\ge\left( \frac{\deg_{L}(\mathbf{X})h_{\overline{L}_\bbphi}(\mathbf{X})-3\epsilon}{c }\right)h_{\mathcal{M}}(t)- \frac{c_2+\kappa}{d^{N-1}}.
	\end{align*}
	The theorem follows. 
	\qed

    %%%%%%
    %%%%%%%%%%%

	\bigskip

\end{document}